\newtheorem{theorem}{Theorem}
\newtheorem{lemma}{Lemma}
\newtheorem{remark}{Remark}
\newtheorem{proposition}{Proposition}
\let\ds\displaystyle
\let\scr\mathscr
\let\goth\mathfrak
\def\zT{{\vphantom{\widetilde T} T}}
\def \Liminf{\mathop{\underline{\lim}}\limits}
\def\BB{\mathbb{B}}
\def\Pb{\mathbf{P}}
\def\Ex{\mathbf{E}}
\def\Pb{\mathbf{P}}
\def\KK{\mathbb{K}}
\def\UU{\mathbb{U}}
\def\1{\mbox{1\hspace{-.25em}I}}
\begin{document}
\title{Hidden AR Process and Adaptive Kalman Filter}
\author{ \textsc{Yury A. Kutoyants}\\ {\small Le Mans University, Le Mans,
    France }\\
{\small     National Research University ``MPEI'', Moscow, Russia }\\ 
{\small Tomsk     State University, Tomsk, Russia }\\
 }

\date{}

\maketitle
\begin{abstract}
 The model of partially observed linear system 
 depending on some unknown parameters is considered. An approximation of the
 unobserved component is proposed. This approximation is realized in three
 steps. First an estimator of the method of moments of unknown parameter is
 constructed. Then this estimator is used for defining the One-step
 MLE-process and finally the last estimator is substituted to the equations of
 Kalman  filter. The solution of obtained  equations provide us
 the approximation (adaptive K-B filter).  The asymptotic properties of all
 mentioned estimators and MLE and Bayesian estimators of the unknown
 parameters are described. The asymptotic efficiency of adaptive filtering is
 discussed.
\end{abstract}
\noindent MSC 2000 Classification: 62M02,  62G10, 62G20.

\bigskip
\noindent {\sl Key words}: \textsl{Partially observed linear system, hidden
  Markov process, Kalman filter, parameter estimation, method of moments
  estimators, MLE and Bayesian estimators,  One-step MLE-process, on-line
  approximation, adaptive Kalman filter.}

\section{Introduction}

We are given  a  linear partially observed system
\begin{align}
\label{1-1}
X_t&=f\,Y_{t-1}+\sigma\, w_t,\qquad X_0,\qquad t=1,2,\ldots,\\
\label{1-2}  
Y_t&=a\,Y_{t-1}+b\, v_t,\qquad \;\;Y_0,
\end{align}
where $X^T=\left(X_0,X_1,\ldots,X_T\right)$ are observations and auto
regressive process (AR) $Y_t,t\geq 0$ is a hidden process. Here $w_t,t\geq 1$
and $v_t,t\geq 1$ are independent standard Gaussian random variables, i.e.,
$w_t\sim {\cal N}\left(0,1\right) $, $v_t\sim {\cal N}\left(0,1\right) $. The
initial values are $X_0\sim {\cal N}\left(0,d_x^2\right) $ and $Y_0\sim {\cal
  N}\left(0,d_y^2\right) $ and can be correlated with correlation $\rho
_{xy}=\Ex X_0Y_0$.  The system is defined by the parameters $a,b,f,\sigma^2,d_x^2,d_y^2
$.  It will be convenient for instant to denote $\vartheta
=\left(a,b,f,\sigma^2 \right)$.

 Denote ${\goth F}_t^X $ the $\sigma $-algebra generated by the first $t+1$
 observations $X_0,X_1,\ldots,X_t $. The conditional expectation $m\left(\vartheta
,t\right)=\Ex_\vartheta
 \left(Y_t| {\goth F}_t^X\right)$ according to the
equations of Kalman filter (see, e.g., Theorem 13.4 in \cite{LS01})  satisfies the equation
\begin{align}
\label{1-3}
m\left(\vartheta ,t\right)=a\,m\left(\vartheta
,t-1\right)+\frac{af\gamma\left(\vartheta,t-1 \right)}{\sigma
  ^2+f^2\gamma\left(\vartheta,t-1 \right)}\left[X_t-fm\left(\vartheta
  ,t-1\right)\right],\quad t\geq 1. 
\end{align}
  The initial value is $m\left(\vartheta ,0\right)=\Ex_\vartheta
 \left(Y_0|X_0\right) $.

 The  mean square  error $\gamma\left(\vartheta,t
 \right)=\Ex_\vartheta \left(Y_t-m\left(\vartheta ,t\right)\right)^2 $ is
 described by the equation 
\begin{align}
\label{1-4}
\gamma\left(\vartheta,t \right)=a^2\gamma\left(\vartheta,t-1
\right)+b^2-\frac{a^2f^2\gamma\left(\vartheta,t-1 \right)^2}{\sigma 
  ^2+f^2\gamma\left(\vartheta,t-1 \right) },\qquad \quad t\geq 1  
\end{align}
with the initial value $\gamma \left(\vartheta
,0\right)=\Ex_\vartheta \left(Y_0-m\left(\vartheta ,0\right)\right)^2  $.

We suppose that the observations $X^T$ are given and that some of the
parameters are unknown, but their values are always satisfy the condition
 \begin{align}
\label{A0}
 {\scr A}_0\;:\qquad a^2\in[0,1),\qquad b^2>0 ,\qquad\qquad f^2>0,\qquad \sigma
 ^2>0 .  
\end{align}
This condition is uniform in the following sense. If, for example, the unknown
parameter is $f\in \left(\alpha _f,\beta _f\right)$, then $\alpha _f>0$ or
$\beta _f<0$. 
  Our goal is to propose an approximation of $m\left(\vartheta
,t\right),t\geq 1$ in such situations and to describe the error of
approximation in the asymptotic of {\it large samples}, i.e., as $T\rightarrow
\infty $.

 The consistent estimation of the parameters
$\left(d_x^2,d_y^2\right) $ is impossible and we suppose that these parameters
are known. If these values are unknown, then the system
\eqref{1-3}-\eqref{1-4}  will be solved with some wrong initial values, but due to
robustness of the solutions the  difference between solutions with true and
wrong initial values under condition  ${\scr A}_0 $  is asymptotically negligible. 
Note as well that the consistent estimation of the parameters $\vartheta
=\left(f,b\right)$ or $\vartheta =\left(f,a,b\right) $ is impossible because
the model \eqref{1-1}-\eqref{1-2} depends on the product $fb$.

This work is devoted to the problem of estimation of $m\left(\vartheta
,t\right), t=1,\ldots,T$ in the situations, where some of the parameters of
the model \eqref{1-1}-\eqref{1-2} are unknown. As usual in such problems we
first estimate the unknown parameter and then this estimator is substituted in
the equations \eqref{1-3}-\eqref{1-4}. The obtained in such a way equations
will describe {\it adaptive Kalman filter}. There existe a wide literature on
adaptive filtering for such and similar partially observed systems. The
difference between them is in the construction of parameter estimators and in
the description of the corresponding errors of approximations of
$m\left(\vartheta ,\cdot \right)$, see, e.g.,
\cite{AWD10},\cite{BC93},\cite{BR85},\cite{CC87},\cite{Hay14},\cite{MS99},
\cite{S08},\cite{YG06},\cite{YL21}
and references there in. There is a large diversity of the models (linear and
non linear), different limits (small noise or large samples) and the methods
of adaptive filtering.  For the words ``adaptive Kalman filter'' Google
Scholar gives half million references. Of course not all of them are exactly
in what we need but nevertheless in some sens it gives the idea how important
this subject is.  We propose one else algorithm which realizes such
procedure. Note that since the work of Kalman \cite{Kal60} the equations
\eqref{1-1}-\eqref{1-2} are considered in more general forms, where $X_t$,
$Y_t$ and $w_t$, $v_t$ are vectors and $f,\sigma,a,b $ are matrices. Our
choice of this simplest model was motivated by the simplicity of calculations
and the same time the obtained results nevertheless seems to be non
trivial. We suppose that the proposed algorithms can be extended on more
complicate models and the results for these models will be similar to the
presented in this work ones.

Note that the problems of parameter estimation for such systems of observations
is a part of more general class of problems of parameter estimation for hidden
Markov processes, see, e.g., the works \cite{BRR98} and \cite{CMR05} and
references there in.

We are interested by the problem of on-line estimation of the conditional
expectation (random function) $m\left(\vartheta ,t\right), 0<t\leq T$ in the
different situations, where $\vartheta $ is unknown. For example,
$f=\vartheta $ and  $a$, $b$ are known.  The usual
behavior in such situations is to estimate first the unknown parameters and
then to substitute these estimators in the equations
\eqref{1-3}-\eqref{1-4}. The most interesting are of course the algorithms of
on-line recurrent adaptive filters. The studied algorithms are mainly verified
with the help of numerical simulations, which show the reasonable behavior of
the adaptive filters.

Our goal is to obtain a good recurrent approximation $m_t^\star,0<t\leq T$ of
the process $m\left(\vartheta ,t\right),0<t\leq T$ in the case of the
homogeneous partially observed system \eqref{1-1}-\eqref{1-2} and to discuss the
question of asymptotic efficiency of adaptive filters.

The estimation of   $m\left(\vartheta ,t\right),t\in (0,T]$ in this work
 is realized following the  program: 

{\it
\begin{enumerate}
\item  Calculate a preliminary estimator $\bar\vartheta_\tau  $ on relatively small
  interval of observations $\left[0,\tau \right]$.
\item Using $\bar\vartheta_\tau  $ construct the  One-step MLE-process $\vartheta
  _{t,T}^\star, \tau <t\leq T $.
\item As approximation of $m\left(\vartheta ,t\right) $ we propose $m_t^\star
  $ obtained with the help of K-B equations, where $\vartheta $ is replaced by
  $\vartheta _{t,T}^\star, \tau <t\leq T $.

\item Estimate the error $m _t^\star-m\left(\vartheta
 ,t\right), \tau <t\leq T  $.
\item Discuss the asymptotic efficiency of the adaptive filter.
\end{enumerate}
}

This means that we have no on-line approximation on the time interval
$\left[0,\tau \right]$, but $\tau /T\rightarrow 0$. Note that the used here
One-step MLE-process is the well-known Le Cam's One-step MLE, in which we 
consider the upper limit of the integral (time $t$) as variable. 

Introduce continuous time model 
\begin{align}
\label{1-6}
{\rm d}X_t&=f\left(\vartheta \right)\,Y_t\,{\rm d}t+\sigma \, {\rm
  d}W_t,\qquad\qquad \quad \; X_0,\quad \qquad 0\leq t\leq T,\\ 
{\rm  d}Y_t&=-a\left(\vartheta \right)\,Y_t\,{\rm d}t+b\left(\vartheta
\right)\,{\rm d}V_t,\qquad\qquad Y_0,\qquad\quad t\geq 0,\label{1-7}
\end{align}
where $W_t,V_t,t\geq 0$ are independent Wiener processes, $f\left(\vartheta
\right),a\left(\vartheta \right),b\left(\vartheta \right)$ are known smooth
functions and $\vartheta \in \Theta \subset{\cal R}^d$ is the unknown parameter.
Suppose that the observations are $X^T=\left(X_t,0\leq t\leq T\right)$ and the
Markov process $Y^T=\left(Y_t,0\leq t\leq T\right)$ is hidden.

We already applied this construction (steps 1-2, or steps 1-4, or steps 1-5) to 4
different models of observations. To  the model of continuous time observations like
\eqref{1-6}-\eqref{1-7} with small noises in the both equations \cite{Kut94},
\cite{KZ21} (1-4). To the model \eqref{1-6}-\eqref{1-7} with small noise in the
equation \eqref{1-6} only \cite{Kut19a}, \cite{Kut22} (1-5). To the model of
hidden telegraph process \cite{KhK18} (1-2). To  the model of observations
\eqref{1-6}-\eqref{1-7} in the asymptotics $T\rightarrow \infty $
\cite{Kut19b} (1-2),\cite{Kut23a} (1-5).

Note that in the Kalman filtering theory the model of observations is slightly
different  and can be written in our case as follows
\begin{align}
\label{1-8} 
X_t&=f\,Y_{t}+\sigma\, w_t,\qquad X_0,\qquad t=1,2,\ldots,\\
Y_{t+1}&=a\,Y_{t}+b\, v_{t+1},\qquad \;\;Y_0.
\label{1-9} 
\end{align}
The link between these two models and the modified equations
\eqref{1-3}-\eqref{1-4} can be found in \cite{LS01}, Corollary 3 of Theorem
13.4. Remark that if we consider the discrete time model as discrete time
approximation of the model \eqref{1-6}-\eqref{1-7}, then it seems the
equations \eqref{1-1}-\eqref{1-2} feet better than \eqref{1-8}-\eqref{1-9}.

 In this work we propose the adaptive filter for the model
 \eqref{1-1}-\eqref{1-2} (steps 1-5). The construction of preliminary method
 of moments estimators follows \cite{KhK18} and the exposition is in some
 sense similar to the exposition in the continuous time case of the work
 \cite{Kut23a}, where the model of observations is \eqref{1-6}-\eqref{1-7}.

In the next section we study the method of moments estimators (preliminary
estimators) of the parameters of the system \eqref{1}-\eqref{2}. Then the
different Fisher informations are calculated for different parameters (section
3).  Having preliminary estimator and Fisher information we introduce the
One-step MLE-processes and study their asymptotic properties (section 4). The
properties of MLE and Bayesian estimator are described in the section 5. The
One-step MLE-process is substituted in the equation \eqref{1-3}-\eqref{1-4}
and this provides us the adaptive filter (section 6). The last seventh section is
devoted to the question of asymptotic efficiency of the proposed adaptive filters.

\section{Method of moments estimators}

Introduce three statistics
\begin{align*}
S_{1,T}\left(X^T\right)&=\frac{1}{T}\sum_{t=1}^{T}\left(X_t-X_{t-1}\right)^2,\quad 
S_{2,T}\left(X^T\right)=\frac{1}{T}\sum_{t=2}^{T}\left(X_t-X_{t-1}\right)\left(X_{t-1}-X_{t-2}\right)
,\\ S_{3,T}\left(X^T\right)&=\frac{1}{T}\sum_{t=3}^{T}\left(X_t-X_{t-1}\right)\left(X_{t-2}-X_{t-3}\right)
\end{align*}
and study their asymptotic ($T\rightarrow \infty )$ behavior. We suppose
always that the condition ${\scr A}_0$ is fulfilled  and the true value is
denoted as 
$\vartheta _0$. 

Denote
\begin{align*}
\Phi_1 \left(\vartheta \right)=\frac{2f^2b^2}{1+a}+2\sigma ^2,\quad
\Phi_2 \left(\vartheta \right)= \frac{f^2b^2\left(a-1\right)}{1+a}-\sigma
^2,\quad \Phi_3 \left(\vartheta \right)=
\frac{f^2b^2a\left(a-1\right)}{\left(1+a\right)}. 
\end{align*}

\begin{lemma}
\label{L1}
We have the limits
\begin{align}
\label{12}
S_{1,T}\left(X^T\right)&\quad \longrightarrow \quad \Phi_1 \left(\vartheta_0
\right) ,\\
\label{13}
S_{2,T}\left(X^T\right)&\quad \longrightarrow \quad \Phi_2 \left(\vartheta_0
\right),\\
\label{14}
S_{3,T}\left(X^T\right)&\quad \longrightarrow \quad \Phi_3 \left(\vartheta_0
\right),
\end{align}
and  there exist constants $C_1>0$,
$C_2>0$, $C_3>0$ such that 
\begin{align}
\label{15}
&\Ex_{\vartheta _0}\left|S_{1,T}\left(X^T\right)-\Phi_1 \left(\vartheta_0
\right) \right|^2\leq\frac{ C_1}{T} , \quad \Ex_{\vartheta
  _0}\left|S_{2,T}\left(X^T\right)-\Phi_2 \left(\vartheta_0 \right)
\right|^2\leq \frac{C_2}{T},\\
\label{16}
& \Ex_{\vartheta
  _0}\left|S_{3,T}\left(X^T\right)-\Phi_3 \left(\vartheta_0 \right)
\right|^2\leq \frac{C_3}{T}.
\end{align}

\end{lemma}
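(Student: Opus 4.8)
The plan is to reduce all three statements to moment computations for the increment process together with a single variance estimate. Write $Z_t = X_t - X_{t-1}$; from \eqref{1-1} this equals $f(Y_{t-1}-Y_{t-2}) + \sigma(w_t - w_{t-1})$, and since $Y_0$, $w_t$, $v_t$ are all centred Gaussian, $(Z_t)$ is a centred Gaussian sequence. Each statistic is then a sample mean of products, $S_{1,T} = T^{-1}\sum_t Z_t^2$, $S_{2,T} = T^{-1}\sum_t Z_t Z_{t-1}$, $S_{3,T} = T^{-1}\sum_t Z_t Z_{t-2}$, so \eqref{12}--\eqref{14} are laws of large numbers and \eqref{15}--\eqref{16} are the accompanying $L^2$ rates; I would treat the three cases in parallel.

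First I would identify the limits through the stationary second-order structure. Since $a^2<1$ under ${\scr A}_0$, the hidden process $Y_t$ is exponentially ergodic with stationary variance $b^2/(1-a^2)$ and autocovariance $r_k := \Ex Y_t Y_{t-k} = b^2 a^{|k|}/(1-a^2)$. Using that $w$ is independent of $Y$ with $\Ex(w_t-w_{t-1})(w_s-w_{s-1})$ equal to $2,-1,0$ for lags $0,1,\geq 2$, and expanding $\Ex Z_t Z_{t-j}$ as $f^2$ times a linear combination of consecutive autocovariances $r_{j-1},r_j,r_{j+1}$ plus the $w$ term, the three computations collapse (through identities such as $2r_1-r_0-r_2 = -b^2(1-a)/(1+a)$) to exactly $\Phi_1,\Phi_2,\Phi_3$. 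The nonstationarity coming from the prescribed initial laws $\mathcal N(0,d_y^2)$, $\mathcal N(0,d_x^2)$ enters only through terms of size $a^{2t}$, so $\Ex Z_t Z_{t-j} = \Phi_{j+1}(\vartheta_0) + O(a^{2t})$ and averaging gives $\Ex S_{j,T} = \Phi_{j+1}(\vartheta_0) + O(1/T)$.

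For the $L^2$ bounds I would estimate the variance of each sample mean and combine it with the bias estimate above, via $\Ex|S_{j,T}-\Phi_{j+1}|^2 = \mathrm{Var}(S_{j,T}) + (\Ex S_{j,T}-\Phi_{j+1})^2$. The variance is $T^{-2}\sum_{s,t}\mathrm{Cov}(Z_s Z_{s-j}, Z_t Z_{t-j})$, and here I would invoke Isserlis' (Wick's) theorem: for centred jointly Gaussian variables each such covariance equals $\Ex[Z_sZ_t]\Ex[Z_{s-j}Z_{t-j}] + \Ex[Z_sZ_{t-j}]\Ex[Z_{s-j}Z_t]$. The decisive fact is that $|\Ex Z_s Z_t| \leq C\,a^{|s-t|}$ (the $w$ part contributes only for $|s-t|\leq 1$, the $Y$ part decays geometrically through $r_k$), so each covariance is bounded by $C'a^{2|s-t|-c}$; the double sum is then $O(T)$ and the variance is $O(1/T)$.

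I expect the main work to be the bookkeeping in this last step: writing out the Isserlis expansions for the three different product structures and checking that the cross pairings mixing the indices $s,s-j,t,t-j$ still produce a geometrically decaying factor in $|s-t|$. A secondary technical point is to make the exponential control of the initial-condition effect uniform, so that both the bias and the near-diagonal covariance terms are handled without ever invoking exact stationarity. Once the geometric bound $|\mathrm{Cov}(Z_s Z_{s-j}, Z_t Z_{t-j})| \leq C a^{\kappa|s-t|}$ is in hand, \eqref{15}--\eqref{16} follow from $\sum_k a^{\kappa k}<\infty$.
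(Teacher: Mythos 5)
Your proposal is correct, and it takes a genuinely different (and in one respect more complete) route than the paper. The paper substitutes the recursions \eqref{1}--\eqref{2} into each product $\left(X_t-X_{t-1}\right)\left(X_{t-j}-X_{t-j-1}\right)$, peels off terms like $T^{-1}\sum Y_{t-2}^2$, $T^{-1}\sum Y_{t-2}v_{t-2}$, $T^{-1}\sum\left(w_t-w_{t-1}\right)\left(w_{t-1}-w_{t-2}\right)$, and applies the law of large numbers to each one separately; the $L^2$ rates \eqref{15}--\eqref{16} are then dispatched in one sentence as ``standard arguments'' based on the exponentially decreasing correlation of the AR process. You instead work directly with the second-order structure of the centred Gaussian increment sequence $Z_t$: the limits come from $\Ex Z_tZ_{t-j}\to\Phi_{j+1}\left(\vartheta_0\right)$ via the autocovariances $r_k=b^2a^{k}/\left(1-a^2\right)$ (your identity $2r_1-r_0-r_2=-b^2\left(1-a\right)/\left(1+a\right)$ and its lag-shifted analogues check out against $\Phi_1,\Phi_2,\Phi_3$), and the variance bound comes from Isserlis' theorem plus the geometric decay $\left|\Ex Z_sZ_t\right|\leq C\left|a\right|^{\left|s-t\right|}+C\1_{\left\{\left|s-t\right|\leq 1\right\}}$. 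This unifies the identification of the limit and the $O(1/T)$ rate into a single covariance computation and makes explicit exactly the step the paper leaves implicit; the price is that you obtain convergence in $L^2$ (hence in probability) rather than the pathwise LLN statement the paper's expansion yields, but that is all the lemma is used for downstream. Two trivial points to tidy up: since ${\scr A}_0$ only requires $a^2\in[0,1)$, the decay factor should be written $\left|a\right|^{\left|s-t\right|}$ throughout; and the first summand of $S_{1,T}$ involves $X_1-X_0$ with the prescribed initial laws rather than a generic increment, contributing a single harmless $O(1/T)$ term that your uniform treatment of the initial-condition effect should mention absorbing.
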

\begin{proof} According to \eqref{1}
\begin{align*}
\frac{1}{T}\sum_{t=1}^{T}\left(X_t-X_{t-1}\right)^2&=\frac{f_0^2}{T}
\sum_{t=1}^{T}\left(Y_{t-1}-Y_{t-2}\right)^2+\frac{2f_0\sigma_0
}{T}\sum_{t=1}^{T}\left(Y_{t-1}-Y_{t-2}\right)\left(w_t-w_{t-1}\right)\\
&\qquad +\frac{\sigma_0
  ^2}{T}\sum_{t=1}^{T}\left(w_t-w_{t-1}\right)^2 .
\end{align*}
The Gaussian time series $Y_t,t\geq 1$ is exponentially mixing with the
stationary (invariant) Gaussian distribution ${\cal N}\left(0,
\frac{b_0^2}{1-a_0^2}\right)$ and it is independent of $w_t,t\geq 1
$. Therefore by the law of large numbers we have the convergences 
\begin{align*}
&\frac{f_0^2}{T}
\sum_{t=1}^{T}\left(Y_{t-1}-Y_{t-2}\right)^2=\frac{f_0^2}{T}
\sum_{t=1}^{T}\left(\left(a_0-1\right)Y_{t-2}+b_0v_{t-1}\right)^2\\
&\qquad \qquad \qquad=\frac{f_0^2\left(1-a_0\right)^2}{T}
\sum_{t=1}^{T}Y_{t-2}^2-\frac{2b_0\left(1-a_0\right)f_0^2}{T}
\sum_{t=1}^{T}Y_{t-2}  v_{t-1}+\frac{f_0^2b_0^2}{T}
\sum_{t=1}^{T}v_{t-1}^2\\
&\qquad \qquad \qquad \longrightarrow
\frac{f_0^2b_0^2\left(1-a_0\right)^2}{1-a_0^2}+f_0^2b_0^2=\Phi
_1\left(\vartheta _0\right)-2\sigma _0^2,\\ 
&\frac{2f_0\sigma_0
}{T}\sum_{t=1}^{T}\left(Y_{t-1}-Y_{t-2}\right)\left(w_t-w_{t-1}\right)\longrightarrow
0,\\
&\frac{\sigma_0
  ^2}{T}\sum_{t=1}^{T}\left(w_t-w_{t-1}\right)^2\longrightarrow 2\sigma_0
  ^2,
\end{align*}
which proves \eqref{12}.

To prove \eqref{13} we write
\begin{align*}
&\left(X_t-X_{t-1}\right)\left(X_{t-1}-X_{t-2}\right)\\
&\qquad =\left[f_0\left(Y_{t-1}-Y_{t-2}\right)+
  \sigma_0 \left(w_t-w_{t-1}\right)\right]\left[f_0\left(Y_{t-2}-Y_{t-3}\right)+
  \sigma_0 \left(w_{t-1}-w_{t-2}\right)\right]\\
&\qquad =\left[f_0\left(a_0-1\right)Y_{t-2}+f_0b_0v_{t-1}+
  \sigma_0 \left(w_t-w_{t-1}\right)\right]\\
&\qquad \qquad \qquad \times
  \frac{1}{a_0}\left[f_0\left(a_0-1\right)Y_{t-2}+f_0b_0a_0v_{t-2}+ 
  \sigma_0 a_0\left(w_{t-1}-w_{t-2}\right)\right].
\end{align*}
Therefore
\begin{align*}
\frac{1}{T}\sum_{t=2}^{T}\left(X_t-X_{t-1}\right)\left(X_{t-1}-X_{t-2}\right)
 & =\frac{f_0^2\left(a_0-1\right)^2 }{a_0T}\sum_{t=2}^{T}Y_{t-2}^2+\frac{f_0^2
  b_0\left(a_0-1\right)}{T}\sum_{t=2}^{T}   Y_{t-2} v_{t-2}                    \\
&\qquad +\frac{\sigma_0
   ^2}{T}\sum_{t=2}^{T}\left(w_t-w_{t-1}\right)
\left(w_{t-1}-w_{t-2}\right)+o\left(1\right). 
\end{align*}
We have
\begin{align*}
&\frac{1}{T}\sum_{t=2}^{T} Y_{t-2} v_{t-2}  =\frac{a_0}{T}\sum_{t=2}^{T} Y_{t-3}
v_{t-2} +\frac{a_0b_0}{T}\sum_{t=2}^{T}  v_{t-2}^2 \longrightarrow a_0b_0,\\
&\frac{1}{T}\sum_{t=2}^{T}\left(w_t-w_{t-1}\right)
\left(w_{t-1}-w_{t-2}\right)\longrightarrow -1.
\end{align*}
Finally
\begin{align*}
\frac{1}{T}\sum_{t=2}^{T}\left(X_t-X_{t-1}\right)\left(X_{t-1}-X_{t-2}\right)&\longrightarrow
\frac{f_0^2b_0^2\left(a_0-1\right)^2 }{a_0\left(1-a_0^2\right) } +f_0^2
  b_0^2a_0\left(a_0-1\right)-\sigma ^2\\
&=\Phi_2 \left(\vartheta_0 \right).
\end{align*}

The last convergence \eqref{14} we obtain using the similar arguments as follows:
\begin{align*}
&\left(X_t-X_{t-1}\right)\left(X_{t-2}-X_{t-3}\right) \\ 
&\qquad =\left[f_0\left(Y_{t-1}-Y_{t-2}\right)+\sigma_0 \left(w_t-
    w_{t-1}\right)\right] \left[f_0\left(Y_{t-3}-Y_{t-4}\right)+\sigma_0 \left(w_{t-2}-
    w_{t-3}\right)\right]\\ 
&\qquad =\left[f_0\left(a_0Y_{t-2}-Y_{t-2}\right)+f_0b_0v_{t-1}+\sigma_0 \left(w_t-
    w_{t-1}\right)\right]\\
&\qquad \qquad \times
  \frac{1}{a_0}\left[f_0\left(a_0Y_{t-3}-Y_{t-3}\right)+{f_0b_0}v_{t-3}+a_0\sigma_0
    \left(w_{t-2}-     w_{t-3}\right)\right].
\end{align*}
Here we used the equation \eqref{2} and the equality
$a_0Y_{t-4}=Y_{t-3}-b_0v_{t-3}$. Further, as $w_t,t\geq 1$ are independent of
$Y_t,t\geq 1$ and $v_t,t\geq 1$ we can write
\begin{align*}
S_{3,T}\left(X^T\right)&=\frac{1}{a_0T}\sum_{t=3}^{T}\left[f_0\left(a_0-1\right)Y_{t-2}
  +f_0b_0v_{t-1}\right]\left[f_0\left(a_0-1\right)Y_{t-3}+f_0b_0v_{t-3}\right]+o\left(1\right)\\ 
&=\frac{1}{a_0T}\sum_{t=3}^{T}\left[f_0\left(a_0-1\right)a_0Y_{t-3}+f\left(a_0-1\right)v_{t-2}
  +f_0b_0v_{t-1}\right]\\ &\qquad \quad \qquad \quad \qquad \quad
\times\left[f_0\left(a_0-1\right)Y_{t-3}+f_0b_0v_{t-3}\right]+o\left(1\right)\\
 &=\frac{f_0^2\left(a_0-1\right)^2}{T}\sum_{t=3}^{T}Y_{t-3}^2+\frac{f_0^2\left(a_0-1\right)b_0}{T}\sum_{t=3}^{T}Y_{t-3}v_{t-3}+o\left(1\right)\\  
&=\frac{f_0^2\left(a_0-1\right)^2}{T}\sum_{t=3}^{T}Y_{t-3}^2+\frac{f_0^2b_0^2\left(a_0-1\right)}{T}\sum_{t=3}^{T}v_{t-3}^2+o\left(1\right)\\ 
&\longrightarrow
\frac{f_0^2b_0^2\left(1-a_0\right)}{1+a_0}+f_0^2b_0^2\left(a_0-1\right)=\Phi
_3\left(\vartheta _0\right).
\end{align*}

As the Gaussian AR process $Y_t,t\geq 1$ has exponentially decreasing
correlation function the convergences \eqref{15},\eqref{16} follow from
the standard arguments. For the higher moments see Rosenthal-type inequalities
\cite{DD07}.

\end{proof}

\begin{remark}
\label{R1}
{\rm From the proofs it follows that the estimates \eqref{14} are valid
  uniformly on compacts $\KK\subset\Theta $ too, i.e., 
\begin{align}
\label{17}
&\sup_{\vartheta_0 \in\KK}\Ex_{\vartheta _0}\left|S_{1,T}\left(X^T\right)-\Phi_1 \left(\vartheta_0
\right) \right|^2\leq\frac{ C}{T} , \quad \sup_{\vartheta_0 \in\KK}\Ex_{\vartheta
  _0}\left|S_{2,T}\left(X^T\right)-\Phi_2 \left(\vartheta_0 \right)
\right|^2\leq \frac{C}{T}\\
\label{18}
&\sup_{\vartheta_0 \in\KK}\Ex_{\vartheta _0}\left|S_{3,T}\left(X^T\right)-\Phi_3\left(\vartheta_0
\right) \right|^2\leq\frac{ C}{T} .
\end{align}

}
\end{remark}
\begin{remark}
\label{R2}
{\rm More detailed analysis allows to verify the asymptotic normality
\begin{align*}
&\sqrt{T}\left(S_{1,T}\left(X^T\right)-\Phi_1 \left(\vartheta_0
\right) \right)\Longrightarrow {\cal N}\left(0,D_1\left(\vartheta
_0\right)^2\right),\\
&\sqrt{T}\left(S_{2,T}\left(X^T\right)-\Phi_2 \left(\vartheta_0
\right) \right)\Longrightarrow {\cal N}\left(0,D_2\left(\vartheta _0\right)^2\right),\\
&\sqrt{T}\left(S_{3,T}\left(X^T\right)-\Phi_3 \left(\vartheta_0
\right) \right)\Longrightarrow {\cal N}\left(0,D_3\left(\vartheta _0\right)^2\right)
\end{align*}
but we do not prove these convergences because we need these MMEs just for
construction of One-step MLE-processes and the estimates \eqref{17},\eqref{18} are
sufficient for these problems.

}
\end{remark}

All parameters of the model \eqref{1}-\eqref{2} can be estimated with
the help of the introduced   statistics $
S_{1,T}\left(X^T\right),S_{2,T}\left(X^T\right),S_{3,T}\left(X^T\right)$. Below
 the method of moments estimators (MME) of the   parameters
$f,a,b,\sigma ^2$ are proposed and their asymptotic behavior is described.

\subsection{Estimation of the parameter $f$.}

 Suppose that the parameters $a,b,\sigma ^2$ are known
and we have to estimate $\vartheta =f\in\left(\alpha _f,\beta _f\right)$, $\alpha
_f>0$. Then the MME can be defined as follows
\begin{align}
\label{19}
f_T^*= \alpha _f \1_{\left\{\BB_{1,T}\right\}}+\bar
f_T\1_{\left\{\BB_{2,T}\right\}}+\beta _f \1_{\left\{\BB_{3,T}\right\}}. 
\end{align}
Here
\begin{align*}
\bar f_T&= \left(\frac{\left(S_{1,T}\left(X^T\right)-2\sigma
  ^2\right)\left(1+a\right)}{2b^2}\right)^{1/2},\\ 
\BB_{1,T}&=\left\{{\rm The
  \; event:}\quad S_{1,T}\left(X^T\right)\leq \frac{2\alpha
  _f^2b^2}{1+a}+2\sigma ^2 \right\},\\  
\BB_{2,T}&=\left\{{\rm The
  \; event:}\quad \frac{2\alpha _f^2b^2}{1+a}+2\sigma
^2<S_{1,T}\left(X^T\right)\leq \frac{2\beta _f^2b^2}{1+a}+2\sigma ^2
\right\},\\  
\BB_{3,T}&=\left\{{\rm The
  \; event:}\quad S_{1,T}\left(X^T\right)\geq \frac{2\beta
  _f^2b^2}{1+a}+2\sigma ^2 \right\} .
\end{align*}
Therefore  $f_T^*\in \left[\alpha _f,\beta _f\right]$. 
As 
\begin{align*}
S_{1,T}\left(X^T\right)\longrightarrow \Phi _1\left(\vartheta
_0\right)=\frac{2f_0^2b^2}{1+a}+2\sigma ^2 
\end{align*}
the probabilities
\begin{align*}
\Pb_{f_0}\left(\BB_{1,T}\right)\longrightarrow 0,\qquad \Pb_{f_0}\left(\BB_{2,T}\right)\longrightarrow 1,\quad    \Pb_{f_0}\left(\BB_{3,T}\right)\longrightarrow 0,
\end{align*}
and below we  omit the representations like  \eqref{19} for the other MMEs. 

It is easy to see that by Lemma \ref{L1} and Remark \ref{R1} the MME
$f_T^* $ is consistent, i.e., $f_T^*\rightarrow f_0$. 

Let us verify the upper bound
\begin{align*}
\sup_{f_0\in\KK}\Ex_{f_0}\left|f_T^*-f_0 \right|^2\leq \frac{C}{T}.
\end{align*}
Put $\eta _T=\sqrt{T}\left(S_{1,T}\left(X^T\right)-\Phi _1\left(\vartheta
_0\right)\right)$.
Note that thanks to the definition \eqref{19} of $f_T^*$ it is sufficient
to study the  statistic $S_{1,T}\left(X_T\right)$ on the set $\BB_2$ only and
therefore we have the estimates
\begin{align}
\label{21}
\frac{2\alpha _f^2b^2}{1+a}+2\sigma ^2 \leq S_{1,T}\left(X_T\right)\leq
\frac{2\beta _f^2b^2}{1+a}+2\sigma ^2  .
\end{align}

 Then
\begin{align*}
f_T^*&=\sqrt{\frac{1+a}{2b^2}}\left[\Phi _1\left(\vartheta
  _0\right)-2\sigma ^2+T^{-1/2}\eta _T \right]^{1/2}\\
&=\sqrt{\frac{1+a}{2b^2}}\left[\Phi _1\left(\vartheta
  _0\right)-2\sigma ^2\right]^{1/2}+ \sqrt{\frac{1+a}{2b^2}}\left[\Phi _1\left(\vartheta
  _0\right)-2\sigma ^2+sT^{-1/2}\eta _T \right]^{-1/2}T^{-1/2}\eta _T\\
&=f_0+\sqrt{\frac{1+a}{2b^2}}\left[\Phi _1\left(\vartheta
  _0\right)-2\sigma ^2+sT^{-1/2}\eta _T \right]^{-1/2}T^{-1/2}\eta _T,
\end{align*}
where  $s\in \left(0,1\right)$   and therefore 
\begin{align*}
\left|f_T^*-f_0\right|\leq \frac{\left(1+a\right)}{2b^2\alpha _f}\;T^{-1/2}\eta _T. 
\end{align*}
Here we used \eqref{21}. Therefore, by Lemma \ref{L1}
\begin{align*}
 \sup_{f_0\in\KK} \Ex_{f_0}\left|f_T^*-f_0 \right|^2\leq C
 \,T^{-1}\Ex_{f_0}\left|\eta _T \right|^2.
\end{align*}

\begin{remark}
\label{R3}
{\rm Of course, the statistics $S_{2,T}\left(X^T\right) $ and
  $S_{3,T}\left(X^T\right) $ as well can be used for the construction of the
  MME of $f$. For example, we can solve the equation
\begin{align*}
S_{2,T}\left(X^T\right)= \frac{f^2b^2\left(a-1\right)}{1+a}-\sigma ^2
\end{align*}
with respect to $f$ and to put
\begin{align*}
f_T^{**}=\left(\frac{\left(S_{2,T}\left(X^T\right)+ \sigma
  ^2\right)\left(1+a\right)}{b^2\left(a-1\right)}\right)^{1/2}. 
\end{align*}
Note that the statistic $S_{2,T}\left(X^T\right) $ takes negative values and
the expression under square root brackets is positive with probability tending
to 1. This MME has the  asymptotic  properties similar to that of  $f_T^{*}$.

}
\end{remark}
\subsection{Estimation of the parameter $b$.}

The estimation of $b$ is almost the same as the estimation of $f$ because
these parameters in $\Phi _1\left(\vartheta \right)$, $\Phi
_2\left(\vartheta \right)$ and $\Phi
_3\left(\vartheta \right)$ are in the products $bf$ only. Therefore the
MME of $b$ has the properties
\begin{align}
\label{22}
b_T^*&= \left(\frac{\left(S_{1,T}\left(X^T\right)-2\sigma
  ^2\right)\left(1+a\right)}{2f^2}\right)^{1/2}\longrightarrow b_0,\qquad
\sup_{b_0\in\KK}\Ex_{b_0}\left|b_T^*-b_0 \right|^2\leq \frac{C}{T}
\end{align}
{\it et ctr.}

\subsection{Estimation of the parameter $a$.}

The solution of the equation
\begin{align*}
S_{1,T}\left(X^T\right)=\frac{2f^2b^2}{1+a}+2\sigma ^2
\end{align*}
leads to the MME
\begin{align*}
a_T^*=\frac{2f^2b^2}{S_{1,T}\left(X^T\right)-2\sigma ^2}-1.
\end{align*}
By Lemma \ref{L1} we have
\begin{align*}
a_T^*\longrightarrow a_0,\qquad \sup_{a_0\in\KK}\Ex_{a_0}\left|a_T^*-
a_0\right|^2\leq \frac{C}{T}. 
\end{align*}

\subsection{Estimation of the parameter $\sigma ^2$.}

The MME
\begin{align*}
\sigma ^{2*}_T=\frac{1}{2}S_{1,T}\left(X^T\right)-\frac{f^2b^2}{1+a}
\end{align*}
is consistent and 
\begin{align*}
\sup_{\sigma ^2_0\in\KK}\Ex_{\sigma ^2_0}\left|\sigma ^{2*}_T-
\sigma ^2_0\right|^2\leq \frac{C}{T}. 
\end{align*}

\subsection{Estimation of the parameter $\vartheta =\left(a,f\right)$.}

The MME $\vartheta _T^*=\left(a_T^*,f_T^*\right)$ is solution of the system of
equations
\begin{align*}
S_{1,T}\left(X^T\right)=\Phi _1\left(\vartheta _T^*\right),\qquad \quad
S_{2,T}\left(X^T\right)=\Phi _2\left(\vartheta _T^*\right) 
\end{align*}
and has the  following form
\begin{align}
\label{24}
a_T^*&=\frac{S_{1,T}\left(X^T\right)+S_{2,T}\left(X^T\right)-\sigma
  ^2}{S_{1,T}\left(X^T\right)-2\sigma ^2} ,\\
\label{25}
f_T^*&=\left(\frac{S_{1,T}\left(X^T\right)\left(1+a_T^*\right)-2\sigma ^2
}{2b^2}\right)^{1/2}. 
\end{align}
With the help of Lemma \ref{L1} it can be shown that 
\begin{align*}
\left(a_T^*,f_T^*\right)\longrightarrow \left(a_0,f_0\right)
\end{align*}
and
\begin{align}
\label{26}
\Ex_{\vartheta _0}\left\|\vartheta ^{*}_T-\vartheta_0\right\|^2\leq \frac{C}{T}. 
\end{align}

\subsection{Estimation of the parameter $\vartheta =\left(a,f,\sigma ^2\right)$.}

In this case we have three equations
\begin{align*}
S_{1,T}\left(X^T\right)&=\frac{2f^2b^2}{1+a}+2\sigma ^2,\quad
S_{2,T}\left(X^T\right)= \frac{f^2b^2\left(a-1\right)}{1+a}-\sigma
^2,\\ S_{3,T}\left(X^T\right)&=
\frac{f^2b^2a\left(a-1\right)}{\left(1+a\right)}.
\end{align*}
The MME $\vartheta _T^*=\left(a_T^*,f_T^*,\sigma_T^{2*}\right)$ is the
following   solution of this system:
\begin{align*}
a_T^*&=\frac{2S_{3,T}\left(X^T\right)}{S_{1,T}\left(X^T\right)+2S_{2,T}\left(X^T\right)}+1,\\
f_T^*&=\frac{S_{3,T}\left(X^T\right)\left(1+a_T^*\right)}{b^2a_T^*\left(a_T^*-1\right)},\\
\sigma_T^{2*}&=\frac{1}{2}S_{1,T}\left(X^T\right)-\frac{\left(f_T^*\right)^2b^2}{1+a_T^*}.
\end{align*}
Once more we have the consistency of $\vartheta _T^*$ and the bound like
\eqref{26}.

Almost similar result we have in the case of estimation $\vartheta
=\left(a,b,\sigma ^2\right)$. 

\bigskip

\begin{remark}
\label{R4}
{\rm Of course, it is possible to define and study the MMEs of the parameters
  $\vartheta =\left(a,b\right)$, $\vartheta =\left(f,\sigma ^2\right)$ and
  $\vartheta =\left(b,\sigma ^2\right)$.  The only forbidden couple of
  parameters is $\vartheta =\left(f,b\right)$. In this case the consistent
  estimation of $\vartheta $ is impossible. Indeed, the stationary AR process
  $Y_t,t=\ldots, -1,0,1,\ldots $ admits the representation
\begin{align*}
Y_t=b\sum_{k=0}^{\infty }a^kv_{t-k}
\end{align*}
and therefore the observed process is 
\begin{align*}
X_t=fb\sum_{k=0}^{\infty }a^kv_{t-k}+\sigma w_t,\qquad k=...,-1,0,1,\ldots.
\end{align*}
Here we introduced the sequence $ v_t, t=...,-1,0,1,\ldots$ of i.i.d. standard
Gaussian r.v.'s.
 The model depends on the product $fb$ and therefore these parameters
can not be estimated separately. 
}
\end{remark}

\section{Fisher informations}

We have the same model of observations
\begin{align*}
X_t&=fY_{t-1}+\sigma \,w_t,\qquad X_0,\qquad t\geq 1,\\
Y_t&=aY_{t-1}+b\,v_{t},\qquad Y_0,\qquad t\geq 1,
\end{align*}
where $w_t,v_t,t\geq 1$ are independent standard Gaussian r.v.'s and $f,\sigma^2
,a,b$ are parameters of the model. As before, we suppose that some of these
parameters are unknown and we have to estimate the unknown parameters  $\vartheta \in\Theta $ by the
observations $X^T=\left(X_0,X_1,\ldots,X_T\right)$. 

The MMEs studied above are consistent, but not asymptotically efficient. That
is why we propose below the construction of One-step MLE-process, which allow
us to solve two problems: first we obtain asymptotically efficient estimators
of these parameters and the second - we describe the approximation of the
conditional expectation $m\left(\vartheta ,t\right),t\geq 1$. 

The construction of One-step MLE-processes requires the knowledge of Fisher
information. That is why we calculate below the Fisher informations related
with different parameters. To do this we recall first  some known properties of
$\gamma \left(\vartheta ,t\right)$ and describe the likelihood ratio function
for this model.

Consider the model of partially observed time series 
\begin{align}
\label{1}
X_t&=f\,Y_{t-1}+\sigma\, w_t,\qquad X_0,\qquad t=1,2,\ldots,\\
\label{2}  
Y_t&=a\,Y_{t-1}+b\, v_t,\qquad \;\;Y_0,
\end{align}
where $X^T=\left(X_0,X_1,\ldots,X_T\right)$ are observations and auto
regressive process (AR) $Y_t,t\geq 0$ is a hidden process. Here $w_t,t\geq 1$
and $v_t,t\geq 1$ are independent standard Gaussian random variables, i.e.,
$w_t\sim {\cal N}\left(0,1\right) $, $v_t\sim {\cal N}\left(0,1\right) $. The
initial values are $X_0\sim {\cal N}\left(0,d_x^2\right) $ and $Y_0\sim {\cal
  N}\left(0,d_y^2\right) $.  The system is defined by the parameters
$a,b,f,\sigma^2,d_x^2,d_y^2 $. We suppose that some of these parameters are
unknown and have to be estimated by observations $X^T$.

It will be convenient for instant to denote $\vartheta =\left(a,b,f,\sigma^2
\right)$. 

 Denote ${\goth F}_t^X $ the $\sigma $-algebra generated by the first $t+1$
 observations $X_0,X_1,\ldots,X_t $. The conditional expectation
 $m\left(\vartheta ,t\right)=\Ex_\vartheta \left(Y_t| {\goth F}_t^X\right)$
 according to the equations of Kalman filter (see, e.g., \cite{Kal60},
 \cite{LS01}) satisfies the equation
\begin{align}
\label{4}
m\left(\vartheta ,t\right)=a\,m\left(\vartheta
,t-1\right)+\frac{af\gamma\left(\vartheta,t-1 \right)}{\sigma
  ^2+f^2\gamma\left(\vartheta,t-1 \right)}\left[X_t-fm\left(\vartheta
  ,t-1\right)\right],\quad t\geq 1. 
\end{align}
  The initial value is $m\left(\vartheta ,0\right)=\Ex_\vartheta
 \left(Y_0|X_0\right) $.

 The  mean square  error $\gamma\left(\vartheta,t
 \right)=\Ex_\vartheta \left(Y_t-m\left(\vartheta ,t\right)\right)^2 $ is
 described by the equation 
\begin{align}
\label{5}
\gamma\left(\vartheta,t \right)=a^2\gamma\left(\vartheta,t-1
\right)+b^2-\frac{a^2f^2\gamma\left(\vartheta,t-1 \right)^2}{\sigma 
  ^2+f^2\gamma\left(\vartheta,t-1 \right) },\qquad \quad t\geq 1  
\end{align}
with the initial value $\gamma \left(\vartheta
,0\right)=\Ex_\vartheta \left(Y_0-m\left(\vartheta ,0\right)\right)^2  $.

If some of the mentioned parameters are unknown, then, of course, we can not
use \eqref{4}-\eqref{5} for calculation of $m\left(\vartheta
,t\right),t\geq 1$. 

We suppose that the
observations $X^T=\left(X_0,X_1,\ldots,X_T\right)$ are given and that some of
the parameters are unknown, but their values are always satisfy the condition
${\scr A}_0$.  Our goal is to propose an approximation of
$m\left(\vartheta ,t\right),t\geq 1$ in such situations and to describe the
error of approximation in the asymptotic of {\it large samples}, i.e., as
$T\rightarrow \infty $.

 The consistent estimation of the parameters
$\left(d_x^2,d_y^2\right) $ is impossible and we suppose that these parameters
are known. If these values are unknown, then the system
\eqref{4}-\eqref{5}  will be solved with some wrong initial values, but due to
robustness of the solutions the  difference between solutions with true and
wrong initial values in our problems  is asymptotically negligible.

  As before, the proposed program is consists in several
steps. First on some learning interval $\left[0,\tau _\zT\right]$ of
negligible length ($\tau _\zT/T\rightarrow 0$) we construct a consistent
preliminary estimator $\vartheta _{\tau _\zT}^*$. Then this estimator is used
for defining the One-step MLE-process $\vartheta ^\star_T= \left(\vartheta
_{t,T}^\star, t=\tau _\zT+ 1,\ldots,T\right)$ and finally the approximation
$m_{T}^\star=\left(   m_{t,T}^\star, t= \tau _\zT+ 1,\ldots,T\right)$ is obtained by substituting $\vartheta
^\star_T $ in the equations \eqref{4}-\eqref{5}. The last step is to
evaluate the error $m_{t,T}^\star-m\left(\vartheta ,t\right) $.

Remark that the function $\gamma\left(\vartheta,t \right) $ converges to the
value 
\begin{align}
\label{6}
\gamma _*\left(\vartheta \right)=\frac{f^2b^2-\sigma ^2\left(1-a^2\right)}{2f^2}
+\frac{1}{2}\left[\left(\frac{\sigma
  ^2\left(1-a^2\right)}{f^2}-b^2\right)^2+\frac{4b^2\sigma ^2}{f^2}\right]^{1/2}
\end{align}
as $t\rightarrow \infty $ (see Example 3 in section 14.4, \cite{LS01}).  The
value $\gamma _*\left(\vartheta \right) $ is obtained as a positive solution
of the equation \eqref{5}, where we put $\gamma\left(\vartheta,t
\right)=\gamma\left(\vartheta,t-1 \right)= \gamma _*\left(\vartheta \right)$,
which becomes
\begin{align*}
\gamma _*\left(\vartheta \right)^2+\left[\frac{\sigma
    ^2\left(1-a^2\right)}{f^2}-b^2\right] \gamma _*\left(\vartheta
\right)-\frac{b^2\sigma ^2}{f^2}=0. 
\end{align*}

Below we study the asymptotic ($T\rightarrow \infty $) properties of
estimators. That is why to simplify the exposition we suppose that the initial
value $\gamma\left(\vartheta,0 \right)= \gamma _*\left(\vartheta
\right)$. Then for any $t\geq 1$ we have $\gamma\left(\vartheta,t \right)=
\gamma _*\left(\vartheta \right) $. Of course, this is condition on
correlation between $X_0$ and $Y_0$ and the values $d_x^2,d_y^2$.

Therefore the  equation \eqref{4} is replaced by the equation 
\begin{align}
\label{7}
m_t\left(\vartheta \right)=a\,m_{t-1}\left(\vartheta
\right)+\frac{af\gamma_*\left(\vartheta \right)}{\sigma
  ^2+f^2\gamma_*\left(\vartheta \right)}\left[X_t-fm_{t-1}\left(\vartheta
\right)\right],\quad t\geq 1
\end{align}
with the corresponding initial value, providing $\gamma\left(\vartheta,0 \right)=
\gamma _*\left(\vartheta \right) $.  Recall that
 equation \eqref{4}    is stable w.r.t. the initial value, i.e., for the wrong
initial condition the difference  $m\left(\vartheta,t
\right)-m_t\left(\vartheta \right)\rightarrow 0 $. 

Note as well that if we denote $\vartheta _0$ the true value, then
\begin{align*}
\zeta _t\left(\vartheta _0\right)=\frac{X_t-f_0m_{t-1}\left(\vartheta
  _0\right)}{\sqrt{\sigma _0^2+f_0^2\gamma _*\left(\vartheta _0\right)}}
,\qquad t\geq 1
\end{align*}
are  i.i.d. standard Gaussian random variables (see Theorem 13.5 in
\cite{LS01}). This means that 
the equation of observations \eqref{1} can be written as follows
\begin{align*}
X_t=f_0m_{t-1}\left(\vartheta _0\right)+ \sqrt{\sigma _0^2+f_0^2\gamma
  _*\left(\vartheta _0\right)}\,\zeta _t\left(\vartheta _0\right),\qquad t\geq
1.
\end{align*}
Using this representation we can rewrite  the equation \eqref{7} too
\begin{align}
\label{9}
m_t\left(\vartheta \right)&=a\,m_{t-1}\left(\vartheta
\right)+\frac{af\gamma_*\left(\vartheta \right)}{\sigma
  ^2+f^2\gamma_*\left(\vartheta \right)}\left[f_0m_{t-1}\left(\vartheta
  _0\right)-fm_{t-1}\left(\vartheta
  \right)\right]\nonumber\\
&\qquad +\frac{af\gamma_*\left(\vartheta \right)\sqrt{\sigma
    _0^2+f_0^2\gamma _*\left(\vartheta _0\right)}}{\sigma
  ^2+f^2\gamma_*\left(\vartheta \right)}\zeta _{t}\left(\vartheta _0\right),\quad t\geq 1.
\end{align}

The likelihood  function is
\begin{align}
\label{10}
L\left(\vartheta ,X^T\right)&=\left(2\pi\left(\sigma^2+f^2\gamma
_*\left(\vartheta \right) \right)
\right)^{-T/2}\exp\left(-\frac{1}{2}\sum_{t=1}^{T}\frac{\left(X_t-fm_{t-1}\left(\vartheta
  \right)\right)^2}{{\sigma^2+f^2\gamma _*\left(\vartheta \right)}}
\right)\nonumber\\ 
&=\left(\frac{1}{2\pi P \left(\vartheta
  \right)}\right)^{T/2}\exp\left(-\frac{1}{2}\sum_{t=1}^{T}\frac{\left(X_t-fm_{t-1}\left(\vartheta
  \right)\right)^2}{{ P \left(\vartheta \right) }} \right),\qquad \vartheta
\in\Theta .
\end{align}

Here 
\begin{align*}
P \left(\vartheta \right)= \sigma^2+f^2\gamma _*\left(\vartheta \right) 
\end{align*}
and $\Theta $ is an open, bounded, convex set of the possible values of the
parameter $\vartheta $.

\subsection{Unknown parameter $b$}
We start with one-dimensional case, say, $\vartheta =b\in\Theta =\left(\alpha
_b,\beta _b\right), \alpha _b>0$.  As above we suppose that $f\not=0,a^2\in
[0,1)$. Therefore the system is 
\begin{align*}
X_t&=f\,Y_{t-1}+\sigma\, w_t,\qquad X_0,\qquad t=1,2,\ldots,\\
Y_t&=a\,Y_{t-1}+\vartheta \, v_t,\qquad \;\;Y_0,
\end{align*}
Introduce the notation: 
\begin{align*}
&\Gamma \left(\vartheta \right)=f^2\gamma _*\left(\vartheta
  \right),\qquad  P \left(\vartheta\right)=\sigma ^2+\Gamma\left(\vartheta
  \right),\qquad A\left(\vartheta \right) =\frac{a\sigma ^2}{ P \left(\vartheta\right)},\\
 &
  B\left(\vartheta,\vartheta_0\right)=\frac{a\Gamma\left(\vartheta \right)\sqrt{ P \left(\vartheta_0 \right)}}{ P
    \left(\vartheta \right)},\qquad\dot  B_b\left(\vartheta_0,\vartheta_0\right)=\left.\frac{\partial
    B\left(\vartheta,\vartheta_0\right)}{\partial \vartheta
  }\right|_{\vartheta =\vartheta _0}= \frac{a\sigma ^2\dot\Gamma\left(\vartheta_0 \right)}{ P \left(\vartheta_0\right)^{3/2} }.
\end{align*}
Note that $\inf_{\vartheta \in\Theta }\dot\Gamma
      \left(\vartheta_0 \right)>0 $ (see \eqref{52} below). Another
estimate which we will use is 
\begin{align}
\label{27}
\sup_{\vartheta \in\Theta }\left|A\left(\vartheta \right) \right|<1.
\end{align}
Let us show the well known fact that it is always fulfilled if $a^2\in [0,1 ) $. We have
\begin{align*}
 P \left(\vartheta \right)&=\frac{f^2\vartheta^2+\sigma ^2+\sigma ^2a^2}{2}  
+\frac{1}{2}\left[\left(\sigma
  ^2\left(1-a^2\right)-f^2\vartheta^2\right)^2+ 4f^2\vartheta^2\sigma ^2 \right]^{1/2}
\end{align*}
and
\begin{align*}
\left|A\left(\vartheta \right)\right|&={2\left|a\right|\sigma ^2}\left({f^2\vartheta^2+\sigma ^2+\sigma ^2a^2  
+ \left[\left(\sigma
  ^2\left(1-a^2\right)-f^2\vartheta^2\right)^2+ 4f^2\vartheta^2\sigma ^2\right]^{1/2}}\right)^{-1}\\
&< \frac{2\left|a\right|\sigma ^2}{\sigma ^2+\sigma ^2a^2
}=\frac{2\left|a\right|}{1+a^2}<1. 
\end{align*}

\begin{proposition}
\label{P1}  The Fisher information is 
\begin{align}
\label{28}
{\rm I}_b\left(\vartheta _0\right)&=\frac{\dot P_b\left(\vartheta_0
\right)^2 \left[P\left(\vartheta_0
\right)^2   +a^2\sigma ^4 \right]  }{2 P
  \left(\vartheta_0\right)^2 \left[P\left(\vartheta_0
\right)^2  -a^2\sigma ^4 \right]}.
\end{align}
\end{proposition}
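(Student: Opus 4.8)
The plan is to read the Fisher information off as the limit
\[
{\rm I}_b\left(\vartheta _0\right)=\lim_{T\rightarrow \infty }\frac{1}{T}\,\Ex_{\vartheta _0}\left[\dot\ell_T\left(\vartheta _0\right)^2\right],\qquad \ell_T\left(\vartheta \right)=\ln L\left(\vartheta ,X^T\right),
\]
with $L$ taken from \eqref{10}. Differentiating \eqref{10} in $\vartheta =b$ (here $f,a,\sigma $ are held fixed, so the $\vartheta $-dependence sits only in $P\left(\vartheta \right)$ and in $m_{t-1}\left(\vartheta \right)$) and evaluating at $\vartheta =\vartheta _0$, where the innovations $\zeta _t\left(\vartheta _0\right)=\left(X_t-fm_{t-1}\left(\vartheta _0\right)\right)/\sqrt{P\left(\vartheta _0\right)}$ are i.i.d.\ standard Gaussian, I would obtain a score of the form
\[
\dot\ell_T\left(\vartheta _0\right)=\frac{\dot P_b\left(\vartheta _0\right)}{2P\left(\vartheta _0\right)}\sum_{t=1}^{T}\left(\zeta _t\left(\vartheta _0\right)^2-1\right)+\frac{f}{\sqrt{P\left(\vartheta _0\right)}}\sum_{t=1}^{T}\zeta _t\left(\vartheta _0\right)\dot m_{t-1}\left(\vartheta _0\right),
\]
where $\dot m_{t-1}\left(\vartheta _0\right)=\partial m_{t-1}\left(\vartheta \right)/\partial \vartheta\big|_{\vartheta =\vartheta _0}$. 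The two sums record the two channels through which $b$ enters: the innovation variance $P\left(\vartheta \right)$ and the predictor $m_{t-1}\left(\vartheta \right)$.

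The key auxiliary step is to control $\dot m_{t-1}\left(\vartheta _0\right)$. Rewriting \eqref{9} in the compact form $m_t\left(\vartheta \right)=A\left(\vartheta \right)m_{t-1}\left(\vartheta \right)+\left(a-A\left(\vartheta \right)\right)m_{t-1}\left(\vartheta _0\right)+f^{-1}B\left(\vartheta ,\vartheta _0\right)\zeta _t\left(\vartheta _0\right)$ and differentiating in $\vartheta $, I would note that at $\vartheta =\vartheta _0$ the two terms proportional to $\dot A\left(\vartheta _0\right)$ cancel, because they multiply the same quantity $m_{t-1}\left(\vartheta _0\right)$, leaving the stable recursion
\[
\dot m_t\left(\vartheta _0\right)=A\left(\vartheta _0\right)\dot m_{t-1}\left(\vartheta _0\right)+\frac{\dot B_b\left(\vartheta _0,\vartheta _0\right)}{f}\,\zeta _t\left(\vartheta _0\right).
\]
Since $\left|A\left(\vartheta _0\right)\right|<1$ by \eqref{27}, this AR$(1)$ recursion has the stationary solution $\dot m_t\left(\vartheta _0\right)=f^{-1}\dot B_b\left(\vartheta _0,\vartheta _0\right)\sum_{k\geq 0}A\left(\vartheta _0\right)^k\zeta _{t-k}\left(\vartheta _0\right)$, whence $\Ex_{\vartheta _0}\dot m_t\left(\vartheta _0\right)^2=f^{-2}\dot B_b\left(\vartheta _0,\vartheta _0\right)^2/\left(1-A\left(\vartheta _0\right)^2\right)$, the effect of any non-stationary initial transient decaying geometrically and hence being asymptotically negligible.

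With this in hand $\Ex_{\vartheta _0}[\dot\ell_T\left(\vartheta _0\right)^2]$ splits into three pieces. Using that $\dot m_{t-1}\left(\vartheta _0\right)$ is a function of $\zeta _{t-1}\left(\vartheta _0\right),\zeta _{t-2}\left(\vartheta _0\right),\ldots$ only, hence independent of $\zeta _t\left(\vartheta _0\right)$, together with the vanishing of the odd Gaussian moments, I would check that the two sums decorrelate across distinct times and that the cross term between them has zero expectation. The first sum contributes $\frac{\dot P_b\left(\vartheta _0\right)^2}{4P\left(\vartheta _0\right)^2}\cdot 2T$ since $\Ex\left(\zeta ^2-1\right)^2=2$, and the second contributes $\frac{f^2}{P\left(\vartheta _0\right)}\sum_t\Ex_{\vartheta _0}\dot m_{t-1}\left(\vartheta _0\right)^2$, so
\[
{\rm I}_b\left(\vartheta _0\right)=\frac{\dot P_b\left(\vartheta _0\right)^2}{2P\left(\vartheta _0\right)^2}+\frac{\dot B_b\left(\vartheta _0,\vartheta _0\right)^2}{P\left(\vartheta _0\right)\left(1-A\left(\vartheta _0\right)^2\right)}.
\]
Substituting $A\left(\vartheta _0\right)=a\sigma ^2/P\left(\vartheta _0\right)$, so that $1-A\left(\vartheta _0\right)^2=\left(P\left(\vartheta _0\right)^2-a^2\sigma ^4\right)/P\left(\vartheta _0\right)^2$, together with $\dot B_b\left(\vartheta _0,\vartheta _0\right)=a\sigma ^2\dot\Gamma \left(\vartheta _0\right)/P\left(\vartheta _0\right)^{3/2}$ and the identity $\dot\Gamma \left(\vartheta _0\right)=\dot P_b\left(\vartheta _0\right)$ (valid because $P=\sigma ^2+\Gamma $ with $\sigma ^2$ fixed), the elementary algebra collapses this to \eqref{28}. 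The main obstacle is not this final algebra but the two analytic points above: confirming the $\dot A$-cancellation and the stationarity argument that evaluates $\Ex_{\vartheta _0}\dot m_{t-1}\left(\vartheta _0\right)^2$ up to an asymptotically negligible transient, and verifying the independence/decorrelation that annihilates the cross term.
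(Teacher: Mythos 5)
Your proposal is correct and follows essentially the same route as the paper: the same score decomposition into the $\dot P$-channel and the $\dot m$-channel, the same observation that the $\dot A$-terms cancel at $\vartheta=\vartheta_0$ so that $\dot m_t(\vartheta_0)$ satisfies a stable AR$(1)$ recursion driven by the innovations, the same stationarity computation of $\Ex_{\vartheta_0}\dot m_t(\vartheta_0)^2$, and the same orthogonality argument killing the cross terms. The only cosmetic difference is that you work with $m_t$ directly while the paper normalizes to $M_t=fm_t$; the resulting algebra and the final formula \eqref{28} agree.
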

\begin{proof}

 The equation \eqref{9} we multiply by $f$, denote $M_t\left(\vartheta
 \right)=fm_t\left(\vartheta \right)$ and  rewrite as follows
\begin{align*}
M_t\left(\vartheta \right)&=\frac{a\sigma ^2}{ P\left(\vartheta
    \right)}\,M_{t-1}\left(\vartheta \right)+\frac{a\Gamma\left(\vartheta \right)}{ P \left(\vartheta \right)}
M_{t-1}\left(\vartheta_0 \right)+\frac{a\Gamma\left(\vartheta \right)\sqrt{ P \left(\vartheta_0 \right)}}{ P \left(\vartheta \right)}\;\zeta _t\left(\vartheta _0\right)\nonumber\\
&=A\left(\vartheta \right)M_{t-1}\left(\vartheta \right)+\left[a-A\left(\vartheta
\right)\right]M_{t-1}\left(\vartheta_0 \right) +B\left(\vartheta ,\vartheta
_0\right)\;\zeta _t\left(\vartheta _0\right),\qquad t\geq 1 .
\end{align*}

The Fisher score  is (see \eqref{10})
\begin{align*}
&\frac{\partial \ln L\left(\vartheta ,X^T\right)}{\partial \vartheta
  }=-\frac{\partial }{\partial \vartheta }
  \sum_{t=1}^{T}\left[\frac{\left(X_t-M_{t-1}\left(\vartheta
      \right)\right)^2}{2 P \left(\vartheta \right)}+\frac{1}{2}\ln  P
    \left(\vartheta \right)\right]\\
 &\qquad \quad
  =\sum_{t=1}^{T}\left[\frac{\left[X_t-M_{t-1}\left(\vartheta
        \right)\right]}{ P \left(\vartheta \right)}\dot
    M_{t-1}\left(\vartheta \right)+ \frac{\left[X_t-M_{t-1}\left(\vartheta
        \right)\right]^2\;\dot  P\left(\vartheta
      \right)}{{2 P\left(\vartheta \right)^2}}-\frac{ \dot
       P\left(\vartheta \right) }{2 P\left(\vartheta \right)}\right]  \\
 &\qquad \quad =\sum_{t=1}^{T}\left[\frac{\left[X_t-M_{t-1}\left(\vartheta
      \right)\right]}{\sqrt{ P\left(\vartheta
    \right)}}\frac{\dot M_{t-1}\left(\vartheta \right)}{\sqrt{ P \left(\vartheta \right)}}+
    \frac{\left[X_t-M_{t-1}\left(\vartheta
        \right)\right]^2}{ P \left(\vartheta \right)}
  \frac{\dot P \left(\vartheta \right)  }{2 P \left(\vartheta \right)}  -\frac{ \dot
       P\left(\vartheta \right) }{2 P\left(\vartheta \right)}\right] \\ &\qquad \quad
  =\frac{1}{\sqrt{ P\left(\vartheta \right)}}\sum_{t=1}^{T}
 \left[ \zeta _t\left(\vartheta \right)\dot M_{t-1}\left(\vartheta
    \right)+\left[\zeta _t\left(\vartheta \right)^2-1 \right]\frac{\dot P \left(\vartheta
    \right)}{2\sqrt{ P \left(\vartheta
    \right)}}\right].
\end{align*}
 Recall that $\dot
  M_{t-1}\left(\vartheta \right)=\partial M_{t-1}\left(\vartheta \right)/\partial
  \vartheta ,\dot
P\left(\vartheta \right)=\partial P\left(\vartheta \right)/\partial
  \vartheta  $ and
\begin{align*}
\zeta_t \left(\vartheta _0\right)=\frac{X_t-M_{t-1}\left(\vartheta _0
  \right)}{{\sqrt{\sigma^2+\Gamma\left(\vartheta _0
      \right)}}}=\frac{X_t-M_{t-1}\left(\vartheta _0 
  \right)   }{\sqrt{ P \left(\vartheta _0\right)}}  ,\qquad t\geq 1  
\end{align*}
 are independent standard Gaussian random variables ( $
\zeta _t\left(\vartheta _0\right)\sim{\cal N}\left(0,1\right)$). 

The equation for derivative  $\dot M_t\left(\vartheta \right),t\geq 1 $ is 
\begin{align*}
\dot M_t\left(\vartheta \right)&=A\left(\vartheta \right)\dot
M_{t-1}\left(\vartheta \right)+\dot A\left(\vartheta
\right)\left[M_{t-1}\left(\vartheta \right)-M_{t-1}\left(\vartheta_0 \right)\right] +\dot B_b\left(\vartheta ,\vartheta
_0\right)\zeta _t\left(\vartheta _0\right)
\end{align*}
with the initial value $\dot M_0\left(\vartheta \right) $. If $\vartheta =\vartheta _0$, then
\begin{align*}
\dot M_t\left(\vartheta _0 \right)&=A\left(\vartheta _0 \right)\dot
M_{t-1}\left(\vartheta _0 \right) +\dot B_b\left(\vartheta_0 ,\vartheta
_0\right)\;\zeta _t\left(\vartheta _0\right),\quad \dot M_0\left(\vartheta_0 \right),\qquad t\geq 1. 
\end{align*}

The stationary version of the process $\dot m_t\left(\vartheta _0
\right),t\geq 1 $ can be written as a sum of i.i.d. variables
\begin{align*}
\dot M_t\left(\vartheta _0 \right)&=\dot B_b\left(\vartheta_0 ,\vartheta
_0\right)\sum_{k=0}^{\infty }A\left(\vartheta _0 \right)^k\zeta
_{t-k}\left(\vartheta _0\right), \\  \dot M_0\left(\vartheta _0
\right)&=\dot B_b\left(\vartheta_0 ,\vartheta 
_0\right)\sum_{k=0}^{\infty }A\left(\vartheta _0 \right)^k\zeta
_{-k}\left(\vartheta _0\right),
\end{align*}
where we introduced i.i.d. r.v.'s $\zeta _k\left(\vartheta _0\right)\sim {\cal
  N}\left(0,1\right), k=0,-1,-2,\ldots$. The real process $\dot
M\left(\vartheta _0 ,t\right),0\leq t\leq T $ has the similar  representation
with the finite sum, but as we are interested by asymptotic ($T\rightarrow
\infty $) properties of estimators we write immediately  this infinite sum and
the difference between these two representations for these processes and for
several other similar processes below are asymptotically
negligible. We have
\begin{align*}
\Ex_{\vartheta _0}\dot M_t\left(\vartheta _0 \right)^2=\dot B_b\left(\vartheta_0
,\vartheta _0\right)^2\sum_{k=0}^{\infty }A\left(\vartheta _0
\right)^{2k}=\frac{\dot B_b\left(\vartheta_0 ,\vartheta
  _0\right)^2}{1-A\left(\vartheta _0 \right)^{2}}=\frac{a^2\sigma ^4\dot P\left(\vartheta _0\right)^2}{P\left(\vartheta _0\right)^3\left(1-A\left(\vartheta _0\right)^2\right)}.
\end{align*}
For the second moment of the score-function we have the following expression 
\begin{align*}
&\Ex_{\vartheta _0}\left[ \frac{\partial \ln L\left(\vartheta
      ,X^T\right)}{\partial \vartheta } \right]^2_{\vartheta =\vartheta    _0}\\ 
&\qquad =\frac{1}{ P\left(\vartheta_0 \right)}\Ex_{\vartheta
    _0}\left(\sum_{t=1}^{T} \left[ \zeta _t\left(\vartheta_0
    \right)\dot M_{t-1}\left(\vartheta_0 \right)+\frac{1}{2}\left[\zeta
      _t\left(\vartheta_0 \right)^2-1 \right]\dot P \left(\vartheta_0
    \right) P \left(\vartheta_0 \right)^{-1/2}\right]\right)^2\\
 &\qquad
  =\frac{1}{ P\left(\vartheta_0 \right)}\Ex_{\vartheta _0}\sum_{t=1}^{T}
  \left( \zeta _t\left(\vartheta_0 \right)\dot M_{t-1}\left(\vartheta_0
  \right)+\frac{1}{2}\left[\zeta _t\left(\vartheta_0 \right)^2-1
    \right]\dot P \left(\vartheta_0 \right) P \left(\vartheta_0
  \right)^{-1/2}\right)^2
\end{align*}
because   (below $t>s$ and for simplicity we omit   $\frac{1}{2} P \left(\vartheta_0
  \right)^{-1/2}   $)
\begin{align*}
&\Ex_{\vartheta _0}\left( \zeta _t\left(\vartheta_0 \right)\dot M_{t-1}\left(\vartheta_0
    \right)+\left[\zeta _t\left(\vartheta_0 \right)^2-1 \right]\dot P \left(\vartheta_0
    \right)\right)\\
&\quad \qquad \qquad\quad \qquad \qquad \times\left( \zeta _s\left(\vartheta_0 \right)\dot M_{s-1}\left(\vartheta_0
    \right)+\left[\zeta _s\left(\vartheta_0 \right)^2-1 \right]\dot P \left(\vartheta_0
    \right)\right)\\
&\qquad \quad =\Ex_{\vartheta _0} \zeta _t\left(\vartheta_0 \right)\dot M_{t-1}\left(\vartheta_0
    \right)\zeta _{s-1}\left(\vartheta_0 \right)\dot M_{s-1}\left(\vartheta_0
    \right)\\
&\qquad \quad\quad \qquad \quad\quad +\dot P \left(\vartheta_0
    \right)\Ex_{\vartheta _0} \zeta _t\left(\vartheta_0 \right)\dot M_{t-1}\left(\vartheta_0
    \right) \left[\zeta _s\left(\vartheta_0 \right)^2-1 \right]\\
&\qquad \quad\quad \qquad \quad\quad +  \dot P \left(\vartheta_0
    \right)  \Ex_{\vartheta _0}\zeta _s\left(\vartheta_0 \right)\dot M_{s-1}\left(\vartheta_0
    \right) \left[\zeta _t\left(\vartheta_0 \right)^2-1 \right]\\
&\qquad \quad\quad \qquad \quad\quad +\dot P \left(\vartheta_0
    \right)^2  \Ex_{\vartheta _0}\left[\zeta _t\left(\vartheta_0
      \right)^2-1 \right]\left[\zeta _s\left(\vartheta_0 \right)^2-1
      \right] =0.
\end{align*}
Here we used the equalities like
\begin{align*}
& \Ex_{\vartheta _0} \left[\zeta _t\left(\vartheta_0 \right)\dot M_{t-1}\left(\vartheta_0
    \right)\zeta _{s}\left(\vartheta_0 \right)\dot M_{s-1}\left(\vartheta_0
    \right)\right]\\
&\qquad \qquad =\Ex_{\vartheta _0} \left[\dot M_{t-1}\left(\vartheta_0
    \right)\zeta _{s}\left(\vartheta_0 \right)\dot M_{s-1}\left(\vartheta_0
    \right)\Ex_{\vartheta _0}\left( \zeta _t\left(\vartheta_0
    \right)|{\goth F}_{t-1}^X  \right)\right]=0.
\end{align*}
Further
\begin{align*}
&\Ex_{\vartheta _0} \left( \zeta _t\left(\vartheta_0 \right)\dot
  M_{t-1}\left(\vartheta_0 \right)+\left[\zeta _t\left(\vartheta_0 \right)^2-1
    \right]\frac{\dot P \left(\vartheta_0 \right)}{2\sqrt{ P \left(\vartheta_0
      \right)}}\right)^2 \\ &\qquad \qquad =\Ex_{\vartheta _0} \zeta
  _t\left(\vartheta_0 \right)^2\dot M_{t-1}\left(\vartheta_0 \right)^2+
  \frac{\dot P \left(\vartheta_0 \right)^2}{4{ P \left(\vartheta_0 \right)}}
  \Ex_{\vartheta _0}\left[\zeta _t\left(\vartheta_0 \right)^2-1
    \right]^2\\ &\qquad \qquad =\Ex_{\vartheta _0} \dot
  M_{t-1}\left(\vartheta_0 \right)^2+ \frac{\dot P \left(\vartheta_0
    \right)^2}{2 P \left(\vartheta_0 \right)} \\ &\qquad \qquad =\frac{\dot
    B_b\left(\vartheta _0,\vartheta _0\right)^2}{1-A\left(\vartheta
    _0\right)^2}+ \frac{\dot P \left(\vartheta_0 \right)^2}{2 P
    \left(\vartheta_0 \right)}\\
&\qquad \qquad =\frac{a^2\sigma ^4\dot P\left(\vartheta
    _0\right)^2}{P\left(\vartheta _0\right)^3\left(1-A\left(\vartheta
    _0\right)^2\right)}+ \frac{\dot P \left(\vartheta_0 \right)^2}{2 P
    \left(\vartheta_0 \right)}\\
&\qquad \qquad =\frac{\dot P\left(\vartheta
    _0\right)^2}{2 P
    \left(\vartheta_0 \right)} \left[\frac{ 2a^2\sigma ^4}{ P\left(\vartheta
      _0\right)^2-a^2\sigma ^4  }+1\right] .
 \end{align*}
Therefore, even if we have a non stationary at the beginning processes the
limit will be  
\begin{align*}
\lim_{T\rightarrow \infty }\frac{1}{T}\Ex_{\vartheta _0}\left[ \frac{\partial \ln
    L\left(\vartheta ,X^T\right)}{\partial \vartheta } \right]^2_{\vartheta
  =\vartheta _0}={\rm I}_b\left(\vartheta _0\right).
\end{align*}

\end{proof}

\begin{remark}
\label{R5}
{\rm If the unknown parameter is $\vartheta =f$ and all other parameters are
  known, then  the
  filtration equations are almost  the same 
\begin{align*}
M_t\left(\vartheta \right)&=a\,M_{t-1}\left(\vartheta
\right)+\frac{a\Gamma\left(\vartheta \right)}{\sigma
  ^2+\Gamma\left(\vartheta \right)}\left[X_t-M_{t-1}\left(\vartheta
  \right)\right],\quad M_t\left(\vartheta \right)=\vartheta m_t\left(\vartheta \right),\quad     t\geq 1,\\ 
\Gamma\left(\vartheta \right)&=\frac{1}{2}\left[{\vartheta^2b^2-\sigma
    ^2\left(1-a^2\right)}\right] +\frac{1}{2}\left[\left({\sigma
  ^2\left(1-a^2\right)}{}-\vartheta ^2b^2\right)^2+{4\vartheta ^2b^2\sigma ^2}\right]^{1/2}.
\end{align*} 

It is easy to see that the score-function has the same form but of course the
derivation now is w.r.t. $\vartheta =f$
\begin{align*}
\frac{\partial L(\vartheta ,X^T)}{\partial \vartheta }=\frac{1}{\sqrt{ P\left(\vartheta \right)}}\sum_{t=1}^{T}
 \left[ \zeta _t\left(\vartheta \right)\dot M_{t-1}\left(\vartheta
    \right)+\left[\zeta _t\left(\vartheta \right)^2-1
      \right]\frac{\dot P_f \left(\vartheta 
    \right)}{2\sqrt{ P \left(\vartheta
    \right)}}\right].
\end{align*}
Here $ P \left(\vartheta \right)=\sigma ^2+\Gamma \left(\vartheta
\right)$.

 To calculate the Fisher information we have to use the following
equation for the derivative $\dot M_{t}\left(\vartheta \right) $
\begin{align*}
\dot M_{t}\left(\vartheta_0 \right)=A\left(\vartheta _0\right)\dot
M_{t-1}\left(\vartheta_0 \right) +\dot B_f\left(\vartheta _0,\vartheta
_0\right)\zeta _t\left(\vartheta _0\right), \qquad t\geq 1
\end{align*}
with the   function 
$$
A\left(\vartheta \right)=\frac{a\sigma ^2}{P \left(\vartheta \right)},\qquad B_f\left(\vartheta ,\vartheta _0\right)=\frac{a\Gamma\left(\vartheta \right)\sqrt{ P \left(\vartheta _0\right)}}{ P \left(\vartheta\right)}.
$$
This gives us the Fisher information
\begin{align}
\label{32}
{\rm I}_f\left(\vartheta _0\right)=\frac{\dot P_f\left(\vartheta_0
\right)^2 \left[P\left(\vartheta_0
\right)^2   +a^2\sigma ^4 \right]  }{2 P
  \left(\vartheta_0\right)^2 \left[P\left(\vartheta_0
\right)^2  -a^2\sigma ^4 \right]}.
\end{align}

}

\end{remark}

\subsection{Unknown parameter $a$}

Suppose that the values  of $f>0,\sigma ^2>0, b>0$ are known and the
parameter $\vartheta =a\in \Theta =\left(\alpha _a,\beta _a\right), -1<\alpha
_a<\beta _a<1$ is unknown. Therefore the  partially observed system is
\begin{align*}
X_t&=f\,Y_{t-1}+\sigma\, w_t,\qquad X_0,\qquad t=1,2,\ldots,\\
Y_t&=\vartheta \,Y_{t-1}+b\, v_t,\qquad \;\;Y_0,
\end{align*}
and the Kalman filter  for $M_t\left(\vartheta \right)=\vartheta m_t\left(\vartheta \right) $ is given by the relations
\begin{align*}
M_t\left(\vartheta \right)&=\left[\vartheta- \frac{\vartheta \Gamma\left(\vartheta \right)}{\sigma
  ^2+\Gamma\left(\vartheta \right)}  \right]  \,M_{t-1}\left(\vartheta
\right)+\frac{\vartheta \Gamma\left(\vartheta \right)}{\sigma
  ^2+\Gamma\left(\vartheta \right)} X_t,\\ 
&= \frac{\vartheta\sigma
  ^2}{\sigma
  ^2+ \Gamma\left(\vartheta \right)}   \,M_{t-1}\left(\vartheta
\right)+\frac{\vartheta \Gamma\left(\vartheta \right)}{\sigma
  ^2+ \Gamma\left(\vartheta \right)} X_t,\\
&= A\left(\vartheta \right)   \,M_{t-1}\left(\vartheta
\right)+\frac{\vartheta \Gamma\left(\vartheta \right)}{P\left(\vartheta
  \right)} M_{t-1}\left(\vartheta _0\right)+B\left(\vartheta ,\vartheta
_0\right)\zeta _t\left(\vartheta _0\right),\quad t\geq 1,\\  
\Gamma\left(\vartheta
\right)&=\frac{1}{2} \left[f^2b^2-\sigma ^2\left(1-\vartheta ^2\right)\right]
+\frac{1}{2}\left[\left({\sigma
    ^2\left(1-\vartheta^2\right)}-f^2b^2\right)^2+4b^2\sigma
    ^2\right]^{1/2}.
 \end{align*}
Here
\begin{align*}
A\left(\vartheta \right)=\frac{\vartheta \sigma ^2}{ P
  \left(\vartheta\right)},\qquad B\left(\vartheta ,\vartheta
_0\right)=\frac{\vartheta \Gamma\left(\vartheta \right)\sqrt{ P
    \left(\vartheta _0\right) } }{ P \left(\vartheta \right)} ,
\end{align*}
and
\begin{align*}
\zeta_t \left(\vartheta _0\right)=\frac{X_t-M_{t-1}\left(\vartheta _0
  \right)}{{\sqrt{\sigma^2+\Gamma\left(\vartheta_0 \right)}}} ,\qquad
t\geq 1
\end{align*}
 are independent standard Gaussian random variables. 

We present the corresponding score-function and Fisher information without
detailed proofs. 

We have 
\begin{align*}
&\frac{\partial \ln L\left(\vartheta ,X^T\right)}{\partial \vartheta
  }  =\frac{1}{\sqrt{ P\left(\vartheta \right)}}\sum_{t=1}^{T}
 \left[ \zeta _t\left(\vartheta \right)\dot M_{t-1}\left(\vartheta
    \right)+\left[\zeta _t\left(\vartheta \right)^2-1 \right]\frac{\dot P \left(\vartheta
    \right)}{2\sqrt{ P \left(\vartheta 
    \right)}}\right].
\end{align*}

The equation for $\dot M_t\left(\vartheta _0\right)=f\partial
m_t\left(\vartheta _0\right)/\partial a $ is 
\begin{align*}
\dot M_t\left(\vartheta _0\right)=A\left(\vartheta_0 \right)\dot M_{t-1}\left(\vartheta
_0\right)+M_{t-1}\left(\vartheta _0\right)+\dot B_a\left(\vartheta
_0,\vartheta _0\right) \zeta _t\left(\vartheta _0\right),\qquad t\geq 1,
\end{align*}
where
\begin{align*}
\dot B_a\left(\vartheta
_0,\vartheta _0\right) =\frac{\Gamma\left(\vartheta_0
  \right)P\left(\vartheta_0 \right)+\vartheta _0\sigma
  ^2\dot P\left(\vartheta_0 \right)}{P\left(\vartheta_0 \right)^{3/2}} . 
\end{align*}
Therefore, using stationarity of all processes we write 
\begin{align*}
\Ex_{\vartheta _0}\dot M_t\left(\vartheta _0\right)^2&=A\left(\vartheta_0
\right)^2\Ex_{\vartheta _0}\dot M_{t-1}\left(\vartheta
_0\right)^2+\Ex_{\vartheta _0}M_{t-1}\left(\vartheta _0\right)^2+\dot
B_a\left(\vartheta _0,\vartheta _0\right)^2\\
&\qquad +2A\left(\vartheta_0
\right)\Ex_{\vartheta _0}\dot M_{t-1}\left(\vartheta
_0\right)M_{t-1}\left(\vartheta _0\right)\\
&=\left(1-A\left(\vartheta_0
\right)^2\right)^{-1}\left[\Ex_{\vartheta _0}M_{t-1}\left(\vartheta _0\right)^2 +\dot
B_a\left(\vartheta _0,\vartheta _0\right)^2\right.\\
&\left.\qquad+2A\left(\vartheta_0
\right)\Ex_{\vartheta _0}\dot M_{t-1}\left(\vartheta
_0\right)M_{t-1}\left(\vartheta _0\right)\right]
\end{align*}
The equations for $M_t\left(\vartheta _0\right)$ and $\dot
M_t\left(\vartheta _0\right) $ allow us to  calculate the following moments
\begin{align*}
\Ex_{\vartheta _0}M_{t-1}\left(\vartheta _0\right)^2&=\frac{\vartheta _0^2 \Gamma 
  \left(\vartheta _0\right)^2}{P \left(\vartheta _0\right)^2  \left(1-\vartheta _0^2\right)},\\
\Ex_{\vartheta _0}\dot M_{t-1}\left(\vartheta
_0\right)M_{t-1}\left(\vartheta _0\right)&=\frac{\vartheta _0\Gamma \left(\vartheta _0\right)^2}{P\left(\vartheta
  _0\right)^2 \left(1-\vartheta
  _0 A\left(\vartheta _0\right)\right)}\left[\frac{\vartheta _0}{\left( 1-A\left(\vartheta
  _0\right)^2\right) }+  P \left(\vartheta
  _0\right)\right.\\
&\qquad \left.+\frac{\vartheta _0\sigma ^2\dot P\left(\vartheta _0\right)
  }{\Gamma \left(\vartheta _0\right)}\right].  
\end{align*}
Hence
\begin{align*}
\Ex_{\vartheta _0}\dot M_t\left(\vartheta
_0\right)^2&=\left(1-A\left(\vartheta _0\right)^2\right)^{-1}\left[\frac{\vartheta _0^2 \Gamma 
  \left(\vartheta _0\right)^2}{P \left(\vartheta _0\right)^2
    \left(1-\vartheta _0^2\right)}+ \frac{\left[\Gamma \left(\vartheta
      _0\right)P\left(\vartheta _0\right)+\sigma ^2\dot P \left(\vartheta
      _0\right) \right]^2}{P \left(\vartheta
      _0\right)^3} \right.\\
  &\left.\quad  +  \frac{2A\left(\vartheta _0\right)  \vartheta _0\Gamma
    \left(\vartheta _0\right)^2}{P\left(\vartheta 
  _0\right)^2 \left(1-\vartheta
  _0 A\left(\vartheta _0\right)\right)}\left(\frac{\vartheta _0}{\left( 1-A\left(\vartheta
  _0\right)^2\right) }+  P \left(\vartheta
  _0\right)+\frac{\vartheta _0\sigma ^2\dot P\left(\vartheta _0\right)
  }{\Gamma \left(\vartheta _0\right)}\right)    \right]\\
&\equiv Q\left(\vartheta _0\right).
\end{align*}
Recall that the Fisher information  is
\begin{align}
\label{33}
{\rm I}_a\left(\vartheta _0\right)&=\frac{1}{T}\Ex_{\vartheta _0}\left(\left.
\frac{\partial \ln L\left(\vartheta ,X^T\right)}{\partial \vartheta 
  }\right|_{\vartheta =\vartheta _0}\right)^2\nonumber\\
&  =\frac{1}{{T P\left(\vartheta \right)}}\sum_{t=1}^{T}
\Ex_{\vartheta _0} \left[ \zeta _t\left(\vartheta \right)\dot M_{t-1}\left(\vartheta
    \right)+\left[\zeta _t\left(\vartheta \right)^2-1 \right]\frac{\dot P \left(\vartheta
    \right)}{2\sqrt{ P \left(\vartheta 
    \right)}}\right]^2\nonumber\\
&  =\frac{1}{{P\left(\vartheta \right)}}\left[\Ex_{\vartheta _0}\dot M_{t-1}\left(\vartheta _0
    \right)^2+\frac{\dot P \left(\vartheta _0
    \right)^2}{2{ P \left(\vartheta  _0\right)}}  \right]\nonumber\\
&=\frac{2Q\left(\vartheta _0\right) +\dot  P \left(\vartheta
  _0\right)^2}{2  P \left(\vartheta _0\right)}.
\end{align}

\begin{remark}
\label{R6}
{\rm The similar calculations allow us to write the score-function and Fisher
  information in the situation where $\vartheta =\sigma ^2$, but we will not
  write it here.
}
\end{remark}

\subsection{Unknown parameter $\vartheta =\left(f,a\right)$}

Consider the system
\begin{align*}
X_t&=\theta_1 \,Y_{t-1}+\sigma\, w_t,\qquad X_0,\qquad t=1,2,\ldots,\\
Y_t&=\theta_2 \,Y_{t-1}+b\, v_t,\qquad \;\;Y_0.
\end{align*}
Here the unknown parameter is $\vartheta =\left(\theta_1,\theta_2\right) $.
Let us denote  $M\left(\vartheta ,t\right)=\theta_1 m_t\left(\vartheta
\right), $ $ \Gamma \left(\vartheta \right)=\theta_1^2\gamma _*\left(\vartheta
\right)$. 
 The Kalman filter we write as follows
\begin{align*}
M\left(\vartheta,t \right)&= \frac{\theta_2\sigma ^2}{{ P}\left(\vartheta
  \right)}M\left(\vartheta,t-1 \right)+\frac{\theta_2
  \Gamma\left(\vartheta \right)}{{ P}\left(\vartheta \right)} X_t\\
&={ A}\left(\vartheta \right)M\left(\vartheta,t-1 \right)+ {\rm
  E}\left(\vartheta \right)M\left(\vartheta_0,t-1 \right)+{\rm
  B}\left(\vartheta,\vartheta _0 \right)\zeta _t\left(\vartheta
_0\right),\qquad \quad t\geq 1,\\  
\Gamma \left(\vartheta \right)&=\frac{1}{2}\left[\theta _1^2b^2-\sigma
  ^2\left(1-\theta _2 ^2\right)\right] +\frac{1}{2}\left[\left(\sigma
  ^2\left(1-\theta_2^2\right)-\theta _1^2b^2\right)^2+4b^2\sigma
  ^2\right]^{1/2},
\end{align*}
where ${ P}\left(\vartheta \right)=\sigma ^2+\Gamma \left(\vartheta \right),$
\begin{align}
\label{34}
 {\rm
  A}\left(\vartheta \right) =\frac{\theta_2\sigma ^2}{{ P}\left(\vartheta
  \right)},\qquad { E}\left(\vartheta \right)=\frac{\theta_2
  \Gamma\left(\vartheta \right)}{{ P}\left(\vartheta \right)},\qquad {\rm
  B}\left(\vartheta ,\vartheta _0\right)= \frac{\theta_2
  \Gamma
\left(\vartheta \right)\sqrt{{ P}\left(\vartheta_0 \right)}}{{
     P}\left(\vartheta \right)}. 
\end{align}
The equations for derivatives $\dot M_f\left(\vartheta _0,t\right),\dot
M_a\left(\vartheta _0,t\right) $ are
\begin{align*}
\dot M_f\left(\vartheta _0,t\right)&={ A}\left(\vartheta_0 \right)\dot
M_f\left(\vartheta_0,t-1 \right)+\dot { 
  B}_f\left(\vartheta_0,\vartheta _0 \right)\zeta _t\left(\vartheta
_0\right),\\
\dot M_a\left(\vartheta _0,t\right)&={ A}\left(\vartheta_0 \right)\dot
M_a\left(\vartheta_0,t-1 \right)+ M\left(\vartheta_0,t-1 \right)+\dot { 
  B}_a\left(\vartheta_0,\vartheta _0 \right)\zeta _t\left(\vartheta
_0\right),
\end{align*}
where recall that
\begin{align*}
M\left(\vartheta_0,t-1 \right)&= \theta_{0,2} M\left(\vartheta_0,t-2 \right)+ {\rm
  B}\left(\vartheta_0,\vartheta _0 \right)\zeta _{t-1}\left(\vartheta
_0\right),\\
\dot { 
  B}_f\left(\vartheta_0,\vartheta _0 \right)&=  \frac{\theta _{0,2}\sigma
  ^2\dot P_f\left(\vartheta _0\right)}{P\left(\vartheta _0\right)^{3/2}}         ,\qquad 
\dot { 
  B}_a\left(\vartheta_0,\vartheta _0 \right)=\frac{\Gamma \left(\vartheta
  _0\right)P\left(\vartheta _0\right)+\theta _{0,2}\dot P_a\left(\vartheta
  _0\right) }{P\left(\vartheta _0\right)^{3/2}}.        
\end{align*}
Here we used the relation ${ A}\left(\vartheta \right)+{\rm
  E}\left(\vartheta \right)=\theta _{2} $.      The vector of score-function is
\begin{align*}
\left.\frac{\partial L(\vartheta ,X^T)}{\partial \theta _1}\right|_{\vartheta
  =\vartheta _0}&=\frac{1}{\sqrt{{ 
      P}\left(\vartheta _0\right)}}\sum_{t=1}^{T}\left[\zeta
  _t\left(\vartheta _0\right)\dot M_f\left(\vartheta
  _0,t-1\right)+\left[\zeta _t\left(\vartheta _0\right)^2-1\right]
  \frac{\dot { P}_f\left(\vartheta _0\right)}{2{ P}\left(\vartheta
    _0\right)}\right],\\
\left.\frac{\partial L(\vartheta ,X^T)}{\partial \theta _2}\right|_{\vartheta
  =\vartheta _0}&=\frac{1}{\sqrt{{ 
      P}\left(\vartheta _0\right)}}\sum_{t=1}^{T}\left[\zeta
  _t\left(\vartheta _0\right)\dot M_a\left(\vartheta
  _0,t-1\right)+\left[\zeta _t\left(\vartheta _0\right)^2-1\right]
  \frac{\dot { P}_a\left(\vartheta _0\right)}{2{ P}\left(\vartheta _0\right)}\right].
\end{align*}
Using the stationarity of the underlying processes we write
\begin{align*}
&\Ex_{\vartheta _0}\left(\left.\frac{\partial L(\vartheta ,X^T)}{\partial
    \theta _1}\right|_{\vartheta =\vartheta _0}\left.\frac{\partial
    L(\vartheta ,X^T)}{\partial \theta _2}\right|_{\vartheta =\vartheta _0}  \right) \\
 &\qquad\qquad\qquad\quad =\frac{T}{{ P}\left(\vartheta
    _0\right)}\left[\Ex_{\vartheta _0} \dot M_f\left(\vartheta
    _0,t-1\right)\dot M_a\left(\vartheta _0,t-1\right)+ \frac{\dot {\rm
        P}_f\left(\vartheta _0\right)\dot { P}_a\left(\vartheta
      _0\right)}{2{ P}\left(\vartheta _0\right)^2} \right].
\end{align*}
Further
\begin{align*}
&\Ex_{\vartheta _0}
  \left[\dot M_a\left(\vartheta _0,t-1\right) \dot M_f\left(\vartheta
    _0,t-1\right)\right]={ A}\left(\vartheta_0 \right)^2\Ex_{\vartheta _0}
  \left[\dot M_a\left(\vartheta _0,t-2\right) \dot M_f\left(\vartheta
    _0,t-2\right)\right]\\
&\qquad \quad +{ A}\left(\vartheta_0 \right)\Ex_{\vartheta _0}
  \left[ M\left(\vartheta _0,t-2\right) \dot M_f\left(\vartheta
    _0,t-2\right)\right]+\dot {\rm
  B}_a\left(\vartheta_0,\vartheta _0 \right)\dot {\rm
  B}_f\left(\vartheta_0,\vartheta _0 \right)\\
&\qquad =\frac{1}{\left(1-{ A}\left(\vartheta_0 \right)^2\right)}\left[
    \frac{{ A}\left(\vartheta_0 \right)\dot {\rm
  B}_f\left(\vartheta_0,\vartheta _0 \right) {\rm
  B}\left(\vartheta_0,\vartheta _0 \right)  }{\left(1-\theta _{0,2}{\rm
      A}\left(\vartheta_0 \right)\right)}+  \dot {\rm
  B}_a\left(\vartheta_0,\vartheta _0 \right)\dot {\rm
  B}_f\left(\vartheta_0,\vartheta _0 \right)  \right] \\
&\qquad =\frac{{ A}\left(\vartheta_0 \right) {\rm
  B}\left(\vartheta_0,\vartheta _0 \right)\dot {\rm
  B}_f\left(\vartheta_0,\vartheta _0 \right)+ \left(1-\theta _{0,2}{\rm
      A}\left(\vartheta_0 \right)\right) \dot {\rm
  B}_a\left(\vartheta_0,\vartheta _0 \right)\dot {\rm
  B}_f\left(\vartheta_0,\vartheta _0 \right)    }{\left(1-\theta _{0,2}{\rm
      A}\left(\vartheta_0 \right)\right)\left(1- { A}\left(\vartheta_0
    \right)^2\right) }\equiv  { K}\left(\vartheta _0\right)
\end{align*}
because
\begin{align*}
\Ex_{\vartheta _0}
  \left[ M\left(\vartheta _0,t-2\right) \dot M_f\left(\vartheta
    _0,t-2\right)\right]=\frac{{\rm
  B}\left(\vartheta_0,\vartheta _0 \right)\dot {\rm
  B}_f\left(\vartheta_0,\vartheta _0 \right)  }{\left(1-\theta _{0,2}{\rm
      A}\left(\vartheta_0 \right)\right)}.
\end{align*}
Hence
\begin{align}
\label{35}
{ I}_{12}\left(\vartheta_0 \right)&=\frac{2{ P}\left(\vartheta
  _0\right)^2 { K}\left(\vartheta _0\right) + \dot { P}_a\left(\vartheta
  _0\right)\dot { P}_f\left(\vartheta _0\right) }{ 2{ P}\left(\vartheta
  _0\right)^3 }.
\end{align}

The Fisher information matrix is
\begin{align*}
{\bf I}\left(\vartheta _0\right)=\begin{pmatrix}
{ I}_{11}\left(\vartheta_0 \right),  &{ I}_{12}\left(\vartheta_0 \right)\\
{ I}_{12}\left(\vartheta_0 \right) &{ I}_{22}\left(\vartheta_0 \right)
\end{pmatrix},
\end{align*}
where the values ${ I}_{11}\left(\vartheta_0 \right) $ and ${\rm
  I}_{22}\left(\vartheta_0 \right) $ are given in \eqref{32} and \eqref{33}
respectively.

\section{One-step MLE-process}

We consider the construction of One-step MLE-process in the case of unknown
parameter $\vartheta =b\in\Theta =\left(\alpha _b,\beta _b\right) $,$\alpha
_b>0$. Let us fix a learning interval of observations
$X^{\tau _\zT }=\left(X_0,X_1,\ldots,X_{\tau _\zT}\right)$, where  $\tau _\zT=\left[T^\delta
  \right], \delta \in \left(\frac{1}{2},1\right)$ and $\left[A\right]$ here is the
integer part of $A$.  As preliminary estimator we take the MME $\vartheta _{\tau _\zT}^*=b_{\tau
  _\zT}^* $ defined in  \eqref{22}
\begin{align*}
\vartheta _{\tau _\zT}^*=2^{-1/2}f^{-1}\left(\left[S_{1,\tau
    _\zT}\left(X^{\tau _\zT}\right)-2\sigma
  ^2\right]\left(1+a\right)\right)^{1/2} .
\end{align*}

One-step MLE-process is 
\begin{align}
\label{36}
&\vartheta _{t,T}^\star=\vartheta _{\tau _\zT}^*+\frac{1}{{\rm
      I}_b(\vartheta _{\tau _\zT}^*)\left(t-\tau _\zT\right) }\sum_{s=\tau
    _\zT+1}^{t}\left[\frac{\left[X_s-fm_{s-1}(\vartheta_{\tau _\zT}^*
        )\right]}{ P (\vartheta_{\tau _\zT}^*)}f\dot
    m_{s-1}(\vartheta_{\tau _\zT}^* )\right.\nonumber\\
 &\qquad \quad \left.+   \left( \left[X_s-fm_{s-1}(\vartheta_{\tau _\zT}^*
        )\right]^2- P(\vartheta_{\tau _\zT}^* )\right)\frac{ \dot
       P(\vartheta_{\tau _\zT}^* )
    }{2 P(\vartheta_{\tau _\zT}^* )^2}\right],\qquad t\in \left[ \tau
    _\zT+2,T\right].
\end{align}

\begin{theorem}
\label{T1}  If $t=\left[vT\right], v\in (0,1]$ and $T\rightarrow \infty $,
  then the following convergences 
\begin{align*}
\sqrt{t}\left(\vartheta _{t,T}^\star-\vartheta _0\right)\Longrightarrow {\cal
  N}\left(0,{\rm I}_b(\vartheta _0)^{-1}\right),\qquad t\Ex_{\vartheta _0}\left(\vartheta _{t,T}^\star-\vartheta _0\right)^2\longrightarrow {\rm I}_b(\vartheta _0)^{-1}.
\end{align*}
hold   uniformly on   compacts $\KK\subset\Theta  $.
\end{theorem}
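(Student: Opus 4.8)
The plan is to carry out the classical Le Cam analysis of the One-step device, the point being that the score correction in \eqref{36} is built precisely to annihilate the error of the preliminary estimator. Write $\psi_s(\vartheta)$ for the summand appearing in \eqref{36}, so that
\[
\vartheta_{t,T}^\star-\vartheta_0=(\vartheta_{\tau_\zT}^*-\vartheta_0)+\frac{1}{{\rm I}_b(\vartheta_{\tau_\zT}^*)(t-\tau_\zT)}\sum_{s=\tau_\zT+1}^{t}\psi_s(\vartheta_{\tau_\zT}^*).
\]
First I would expand $\psi_s(\vartheta_{\tau_\zT}^*)$ about $\vartheta_0$ to first order with remainder, giving $\sum_s\psi_s(\vartheta_{\tau_\zT}^*)=\sum_s\psi_s(\vartheta_0)+(\vartheta_{\tau_\zT}^*-\vartheta_0)\sum_s\dot\psi_s(\tilde\vartheta)$ with $\tilde\vartheta$ between $\vartheta_0$ and $\vartheta_{\tau_\zT}^*$. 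The substitution $X_s-fm_{s-1}(\vartheta_0)=\sqrt{P(\vartheta_0)}\,\zeta_s(\vartheta_0)$ shows that $\psi_s(\vartheta_0)=P(\vartheta_0)^{-1/2}[\zeta_s\dot M_{s-1}(\vartheta_0)+(\zeta_s^2-1)\dot P(\vartheta_0)P(\vartheta_0)^{-1/2}/2]$ is exactly the per-observation score already analysed in Proposition \ref{P1}.

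Second, I would establish the law of large numbers $\frac{1}{t-\tau_\zT}\sum_s\dot\psi_s(\vartheta_0)\to-{\rm I}_b(\vartheta_0)$, the standard differentiated-score identity combined with ergodicity of the stationary innovation representation \eqref{9} (the AR process, hence $m_{s-1}$ and $\dot M_{s-1}$, being exponentially mixing). Together with the consistency $\vartheta_{\tau_\zT}^*\to\vartheta_0$ from \eqref{22} and continuity of ${\rm I}_b$, the first-order term reduces to $-(\vartheta_{\tau_\zT}^*-\vartheta_0)$ up to lower-order fluctuations and cancels the leading preliminary error. What survives is
\[
\sqrt{t}\,(\vartheta_{t,T}^\star-\vartheta_0)=\frac{1}{{\rm I}_b(\vartheta_0)}\cdot\frac{t}{t-\tau_\zT}\cdot\frac{1}{\sqrt{t}}\sum_{s=\tau_\zT+1}^{t}\psi_s(\vartheta_0)+R_{t,T},
\]
where $t/(t-\tau_\zT)\to1$ because $\tau_\zT/T\to0$. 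The principal term is then handled by a martingale central limit theorem: since $\Ex_{\vartheta_0}(\psi_s(\vartheta_0)\mid{\goth F}_{s-1}^X)=0$ (the vanishing conditional mean is what was exploited in Proposition \ref{P1}), the $\psi_s(\vartheta_0)$ are stationary square-integrable martingale differences whose conditional variances average by ergodicity to ${\rm I}_b(\vartheta_0)$, while the Gaussian tails of $\zeta_s(\vartheta_0)$ supply the Lindeberg condition. Hence $t^{-1/2}\sum_s\psi_s(\vartheta_0)\Longrightarrow{\cal N}(0,{\rm I}_b(\vartheta_0))$, and the prefactor ${\rm I}_b(\vartheta_0)^{-1}$ converts this to the claimed ${\cal N}(0,{\rm I}_b(\vartheta_0)^{-1})$.

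The main obstacle, and the reason for the restriction $\delta\in(\frac12,1)$, is the control of $R_{t,T}$, in particular of the quadratic Taylor contribution $\sqrt{t}\,(\vartheta_{\tau_\zT}^*-\vartheta_0)^2\cdot\frac{1}{t-\tau_\zT}\sum_s\ddot\psi_s(\tilde\vartheta)$. Using $\Ex_{\vartheta_0}|\vartheta_{\tau_\zT}^*-\vartheta_0|^2\le C/\tau_\zT=CT^{-\delta}$ from \eqref{22} and a uniform bound on the averaged second derivative, this term is $O_P(T^{1/2-\delta})\to0$ exactly when $\delta>1/2$; the same estimate together with the $O_P((t-\tau_\zT)^{-1/2})$ rate of the law of large numbers disposes of the residual first-order discrepancy. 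For the moment statement $t\,\Ex_{\vartheta_0}(\vartheta_{t,T}^\star-\vartheta_0)^2\to{\rm I}_b(\vartheta_0)^{-1}$ I would upgrade the weak convergence by proving uniform integrability of $t(\vartheta_{t,T}^\star-\vartheta_0)^2$, via Rosenthal-type higher-moment bounds on the martingale sum as cited after Lemma \ref{L1}. Finally, since the MME bounds \eqref{17}--\eqref{18}, the law of large numbers and the central limit theorem are all uniform over a compact $\KK\subset\Theta$, both convergences hold uniformly on $\KK$.
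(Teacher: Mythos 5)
Your proposal is correct and follows essentially the same route as the paper: Taylor-expand the one-step correction around $\vartheta_0$, cancel the preliminary error $\vartheta_{\tau_\zT}^*-\vartheta_0$ against the law-of-large-numbers limit $-{\rm I}_b(\vartheta_0)$ of the averaged differentiated score, apply the CLT to the score at $\vartheta_0$, and kill the quadratic remainder $O_P(\sqrt{t}\,(\vartheta_{\tau_\zT}^*-\vartheta_0)^2)=O_P(T^{1/2-\delta})$ using $\delta>1/2$ and the MME rate together with the derivative bounds of Lemma \ref{L2}. The only differences are presentational (you expand the summand $\psi_s$ as a whole while the paper expands $X_s-fm_{s-1}$, $P$, and ${\rm I}_b$ separately, and you make the martingale CLT and uniform-integrability steps explicit where the paper is terser), not substantive.
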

\begin{proof}
To study this estimate we need the bounds on the first and second derivatives
of $m_t\left(\vartheta \right)$ presented in the next lemma.

\begin{lemma}
\label{L2} 
  For any $p>1$ there exist constants $C_1>0,C_2>0$ not depending on
 $\vartheta_0\in\Theta $ and $t\geq 1$ such that for
\begin{align}
\label{38}
\sup_{\vartheta \in\Theta }\Ex_{\vartheta _0}\left|\dot m_t\left(\vartheta
\right)\right|^p<C_1, \qquad \qquad \sup_{\vartheta \in\Theta }\Ex_{\vartheta _0}\left|\ddot m_t\left(\vartheta
\right)\right|^p<C_2.
\end{align}
\end{lemma}
\begin{proof}
 The equation for the second derivative $\ddot m\left(\vartheta ,t\right)$ is
\begin{align*}
\ddot m_{t}\left(\vartheta \right)&=A\left(\vartheta \right)\ddot
m_{t-1}\left(\vartheta\right)+ 2\dot A\left(\vartheta \right)\dot
m_{t-1}\left(\vartheta\right)+\ddot A\left(\vartheta \right)
m_{t-1}\left(\vartheta\right)\nonumber\\
&\qquad \qquad -\ddot A\left(\vartheta \right)
m_{t-1}\left(\vartheta_0\right) +\ddot B^*\left(\vartheta ,\vartheta
_0\right)\zeta_t\left(\vartheta _0\right) .
\end{align*}
Note that here $B^*\left(\vartheta ,\vartheta _0\right)=af\gamma
_*\left(\vartheta \right)\sqrt{P\left(\vartheta _0\right)}P\left(\vartheta
\right)^{-1} $.

Recall that the stationary versions of the random functions
$m  \left(\vartheta _0 ,t-1\right),$\\ $m\left(\vartheta ,t-1\right)$, $\dot
m\left(\vartheta ,t-1\right)$ and $\ddot
m\left(\vartheta ,t-1\right)$ are
\begin{align*}
&m_{t-1}\left(\vartheta _0\right)=B^*\left(\vartheta _0,\vartheta
_0\right)\sum_{k=0}^{\infty } a^k\zeta_{t-k-1},\\ 
&m_{t-1}\left(\vartheta\right)=A\left(\vartheta
\right)m_{t-2}\left(\vartheta\right) +\left[a-A\left(\vartheta
  \right)\right]B^*\left(\vartheta _0,\vartheta _0\right)\sum_{k=0}^{\infty }
a^k\zeta _{t-k-2}+B^*\left(\vartheta ,\vartheta _0\right)\zeta
_{t-k-1}\\
&\qquad =\left[a-A\left(\vartheta
  \right)\right]B^*\left(\vartheta _0,\vartheta _0\right)\sum_{j=0}^{\infty
}A\left(\vartheta \right)^j   \sum_{k=0}^{\infty }
a^k\zeta _{t-j-k-2}   +  B^*\left(\vartheta ,\vartheta _0\right) \sum_{j=0}^{\infty
}A\left(\vartheta \right)^j \zeta _{t-j-1}      ,                      \\
&\dot m_{t-1}\left(\vartheta \right)=B^*\left(\vartheta_0,\vartheta _0 \right)\dot
A\left(\vartheta \right) \,A\left(\vartheta \right) ^{-1}\sum_{j=0}^{\infty
}\left[aj- A\left(\vartheta \right)\left(j+1\right)\right]A\left(\vartheta
\right)^j \sum_{k=0}^{\infty }
a^k\zeta _{t-j-k-2} \\
&\qquad \quad \qquad \quad  + \sum_{j=0}^{\infty
}\left[\dot B^*\left(\vartheta ,\vartheta _0\right)+jB^*\left(\vartheta ,\vartheta _0\right)\dot
A\left(\vartheta \right) \,A\left(\vartheta \right) ^{-1} \right]
A\left(\vartheta \right)^j \zeta _{t-j-1},\\
&\ddot m_{t}\left(\vartheta \right)= \sum_{j=0}^{\infty
}\left[F_0\left(\vartheta ,\vartheta _0\right)+jF_1\left(\vartheta ,\vartheta
  _0\right)+j^2F_2\left(\vartheta ,\vartheta  _0\right)\right]A\left(\vartheta 
\right)^j \sum_{k=0}^{\infty }a^k\zeta _{t-j-k-2}  \\
&\qquad \quad \qquad \quad +\sum_{j=0}^{\infty
}\left[H_0\left(\vartheta ,\vartheta _0\right)+jH_1\left(\vartheta ,\vartheta
  _0\right)+j^2H_2\left(\vartheta ,\vartheta  _0\right) \right]
A\left(\vartheta \right)^j \zeta _{t-j-1}.
\end{align*}
The bounded functions $F_i\left(\vartheta ,\vartheta  _0\right),H_i\left(\vartheta
,\vartheta  _0\right),i=0,1,2,   $ can be easily calculated by the formal
differentiation of $\dot m_{t-1}\left(\vartheta \right) $.

From these representations, \eqref{27}, condition  $a^2\in\left(0,1\right)$ 
and boundedness of the functions $F_i\left(\cdot ,\cdot \right),H_i\left(\cdot
,\cdot \right)$ it follows that the Gaussian processes $\dot
m_{t-1}\left(\vartheta \right) $ and $\ddot m_{t-1}\left(\vartheta \right) $
have bounded variances and fast decreasing covariance functions.

For example,
\begin{align*}
&\Ex_{\vartheta _0}\left(\sum_{j=0}^{\infty }j^2A\left(\vartheta
  \right)^j\sum_{k=0}^{\infty }a^k\zeta _{t-j-k-2} \right)^2\\
&\qquad \quad =\Ex_{\vartheta _0}\left(\sum_{j=0}^{\infty }j^2A\left(\vartheta
  \right)^j\sum_{k=0}^{\infty }a^k\zeta _{t-j-k-2} \sum_{l=0}^{\infty }l^2A\left(\vartheta
  \right)^l\sum_{m=0}^{\infty }a^m\zeta _{t-m-l-2} \right)\\
&\qquad \quad =\sum_{j=0}^{\infty }j^2A\left(\vartheta
  \right)^j\sum_{l=0}^{\infty }l^2A\left(\vartheta
  \right)^l\sum_{k=0}^{\infty }\sum_{m=0}^{\infty }a^ka^m\Ex_{\vartheta
    _0}\Bigl( \zeta _{t-j-k-2} \zeta _{t-m-l-2} \Bigr)\\ 
&\qquad \quad =\sum_{j=0}^{\infty }j^2A\left(\vartheta
  \right)^j\sum_{l=0}^{\infty }l^2A\left(\vartheta
  \right)^la^{j-l}\sum_{k\geq l-j\geq 0}^{\infty }a^{2k}\\
&\qquad \quad =\frac{1}{1-a^2}\sum_{j=0}^{\infty }j^2A\left(\vartheta
  \right)^j\sum_{l\geq j}^{\infty }l^2A\left(\vartheta
  \right)^la^{j-l}a^{2\left(l-j\right)}\\
&\qquad \quad =\frac{1}{1-a^2}\sum_{j=0}^{\infty }j^2A\left(\vartheta
  \right)^ja^{-j}\sum_{l\geq j}^{\infty }l^2\left[aA\left(\vartheta
  \right)\right]^l.
\end{align*}
Recall that
\begin{align*}
 \sum_{l\geq j}^{\infty } l^2\left[aA\left(\vartheta
   \right)\right]^{l}&=\left[\ln aA\left(\vartheta
   \right)\right]^{-2}\frac{\partial ^2}{\partial x^2}\sum_{l\geq j}^{\infty
 }\left.\left[aA\left(\vartheta \right)\right]^{xl}\right|_{x=1}\\
&= \left[\ln aA\left(\vartheta \right)\right]^{-2}\frac{\partial ^2}{\partial
   x^2}\left.\left(\frac{\left[aA\left(\vartheta
     \right)\right]^{xj}}{1-\left[aA\left(\vartheta
     \right)\right]^{x}}\right)\right|_{x=1}.
\end{align*}

 Therefore the moments $\Ex_{\vartheta _0}\dot m_t\left(\vartheta \right)^2 $
 and $\Ex_{\vartheta _0}\ddot m_t\left(\vartheta \right)^2 $ can be calculated
 exactly and the constants $C_1,C_2$ in \eqref{38} can be   chosen not depending on
 $\vartheta_0\in\Theta $ and $t\geq 0$.

\end{proof}

Consider the normalized difference
\begin{align*}
&\sqrt{T}\left(\vartheta _{t,T}^\star-\vartheta
  _0\right)=\sqrt{T}\left(\vartheta _{\tau _\zT}^*-\vartheta
  _0\right)\nonumber\\
&\qquad +\frac{\sqrt{T}}{{\rm 
      I}_b(\vartheta _{\tau _\zT}^*)\left(t-\tau _\zT\right) }\sum_{s=\tau
    _\zT+1}^{t}\left[\frac{\left[X_s-fm_{s-1}(\vartheta_{\tau _\zT}^*
        )\right]}{ P (\vartheta_{\tau _\zT}^*)}f\dot
    m_{s-1}(\vartheta_{\tau _\zT}^* )\right.\nonumber\\
 &\qquad \quad \left.+    \left(\left[X_s-fm_{s-1}(\vartheta_{\tau _\zT}^*
        )\right]^2-  P(\vartheta_{\tau _\zT}^*
      )\right)\frac{ \dot
       P(\vartheta_{\tau _\zT}^* )
    }{2 P(\vartheta_{\tau _\zT}^* )^2}\right],\qquad t\in \left[ \tau
    _\zT+2,T\right].
\end{align*}
We can write
\begin{align*}
&X_s-fm_{s-1}(\vartheta_{\tau _\zT}^* )=X_s-fm_{s-1}(\vartheta_0
)+f\left[m_{s-1}(\vartheta_0 ) -m_{s-1}(\vartheta_{\tau _\zT}^* )\right]\nonumber
\\ &\qquad \;\;\; =\sqrt{ P \left(\vartheta _0\right)}\,\zeta _s\left(\vartheta
_0\right)- (\vartheta_{\tau _\zT}^*-\vartheta _0 )f\dot m_{s-1}\left(\vartheta
_0\right)-\frac{f}{2}(\vartheta_{\tau _\zT}^*-\vartheta _0 )^2\ddot
m_{s-1}(\tilde\vartheta ),\nonumber\\
& P (\vartheta_{\tau _\zT}^*)= P (\vartheta_0)+(\vartheta_{\tau
  _\zT}^*-\vartheta _0  )\dot  P (\vartheta_0)+\frac{1}{2}(\vartheta_{\tau
  _\zT}^*-\vartheta _0  )^2\ddot  P (\tilde\vartheta), \nonumber\\
& {\rm I}_b(\vartheta _{\tau _\zT}^*)={\rm I}_b(\vartheta _0)+O\left(\vartheta_{\tau
  _\zT}^*-\vartheta _0  \right)\nonumber\\
 &\left[X_s-fm_{s-1}(\vartheta_{\tau _\zT}^*
  )\right]^2-  P (\vartheta_{\tau _\zT}^*  )  = P (\vartheta_0  )\left[\zeta
_s\left(\vartheta _0\right)^2-1 \right] -(\vartheta_{\tau _\zT}^*-\vartheta _0
) \dot P \left(\vartheta _0\right)\nonumber\\
&\qquad \qquad -2
(\vartheta_{\tau _\zT}^*-\vartheta _0 )\sqrt{ P (\vartheta_0  )}\zeta
_s\left(\vartheta _0\right)  \dot m_{s-1}\left(\vartheta
 _0\right)+O\left( (\vartheta_{\tau _\zT}^*-\vartheta _0 )^2 \right).
\end{align*}
The substitution of these relations in \eqref{36} yields 
\begin{align*}
&\vartheta _{t,T}^\star =\vartheta_{\tau _\zT}^*\nonumber\\
&\quad + \sum_{s=\tau _\zT+1}^{t}\frac{\left[2\zeta
      _{s}(\vartheta_{0} )f\dot m_{s-1}(\vartheta_{0}
      )\sqrt{ P(\vartheta_0 )}+ \left[\zeta _{s}(\vartheta_{0}
        )^2-1\right]\dot  P(\vartheta_0 )\right] }{2\left(t-\tau
    _\zT\right) {\rm I}_b(\vartheta _0) P (\vartheta_0)
  }\left(1+O\left(\vartheta_{\tau _\zT}^*-\vartheta _0 \right)\right)\nonumber\\
 &\qquad  - \frac{(\vartheta_{\tau _\zT}^*-\vartheta _0 )}{\left(t-\tau
    _\zT\right) {\rm I}_b(\vartheta _0)} \sum_{s=\tau
    _\zT+1}^{t}\left(\frac{f^2\dot m_{s-1}\left(\vartheta _0\right)^2}{ P
    (\vartheta_0) }+\frac{ \dot P
    (\vartheta_0)^2 }{2 P
    (\vartheta_0)^2 }\right)+O\left( (\vartheta_{\tau _\zT}^*-\vartheta _0 )^2
  \right).
\end{align*}
Note that if $t\rightarrow \infty $ then  by LLN and by the CLT uniformly on
compacts $\KK\subset \Theta $ we have
\begin{align*}
R_{1,T}&=\frac{1}{t}\sum_{s=\tau
    _\zT+1}^{t}\left(\frac{f^2\dot m_{s-1}\left(\vartheta _0\right)^2}{ P
    (\vartheta_0) }+\frac{ \dot P
    (\vartheta_0)^2 }{2 P
    (\vartheta_0)^2 }\right)\longrightarrow {\rm I}_b(\vartheta _0),\\
R_{2,T}&=\frac{1}{\sqrt{t}}\sum_{s=\tau
    _\zT+1}^{t}\left({\rm I}_b(\vartheta _0)-\frac{f^2\dot m_{s-1}\left(\vartheta _0\right)^2}{ P
    (\vartheta_0) }-\frac{ \dot P
    (\vartheta_0)^2 }{2 P
    (\vartheta_0)^2 }\right)\Longrightarrow {\cal
  N}\left(0,{\rm D}(\vartheta _0)^2\right),\\
R_{3,T}&=\frac{1}{t}\Ex_{\vartheta _0}\left[\sum_{s=\tau
    _\zT+1}^{t}\left(\frac{f^2\dot m_{s-1}\left(\vartheta _0\right)^2}{ P
    (\vartheta_0) }+\frac{ \dot P
    (\vartheta_0)^2 }{2 P
    (\vartheta_0)^2 }-{\rm I}_b(\vartheta _0)\right)\right]^2\\
&=\frac{1}{tP
    (\vartheta_0)^2}\Ex_{\vartheta _0}\left[\sum_{s=\tau
    _\zT+1}^{t}\Bigl({f^2\left[\dot m_{s-1}\left(\vartheta
    _0\right)^2-\Ex_{\vartheta _0} \dot m_{s-1}\left(\vartheta
    _0, t\right)^2 \right]}{  }\Bigr)\right]^2\leq C,\\
  \end{align*}
where the constant $C>0$ does not depend on $t$ and $\vartheta _0\in\KK$. 
We have as well the uniform convergence
\begin{align*}
\frac{1}{\sqrt{t}}\sum_{s=\tau _\zT+1}^{t}\frac{\left[2\zeta
      _{s}(\vartheta_{0} )f\dot m_{s-1}(\vartheta_{0}
      )\sqrt{ P(\vartheta_0 )}+ \left[\zeta _{s}(\vartheta_{0}
        )^2-1\right]\dot  P(\vartheta_0 )\right] }{2\left(t-\tau
    _\zT\right) {\rm I}_b(\vartheta _0) P (\vartheta_0)}\Longrightarrow {\cal
  N}\left(0,{\rm I}_b(\vartheta _0)^{-1}\right).
\end{align*}
The value  $\sup_{\vartheta _0\in\Theta }{\rm D}(\vartheta _0)^2<\infty  $ can
be calculated too, but we need not it. 
Therefore if we put $t=vT$, $v\in (T^{\delta -1},1]$, then
\begin{align*}
&\sqrt{t}\left(\vartheta _{t,T}^\star-\vartheta _0\right)
=\sqrt{t}\left(\vartheta_{\tau _\zT}^*-\vartheta _0\right)\\
&\quad + \sum_{s=\tau _\zT+1}^{t}\frac{\left[2\zeta
      _{s}(\vartheta_{0} )f\dot m_{s-1}(\vartheta_{0}
      )\sqrt{ P(\vartheta_0 )}+ \left[\zeta _{s}(\vartheta_{0}
        )^2-1\right]\dot  P(\vartheta_0 )\right] }{2\left(\sqrt{t}-\tau
    _\zT\right) {\rm I}_b(\vartheta _0) P (\vartheta_0)
  }\left(1+O\left(\vartheta_{\tau _\zT}^*-\vartheta _0 \right)\right)\\
 &\qquad  - \frac{\sqrt{t}(\vartheta_{\tau _\zT}^*-\vartheta _0 )}{\left(t-\tau
    _\zT\right) {\rm I}_b(\vartheta _0)} \sum_{s=\tau
    _\zT+1}^{t}\left(\frac{f^2\dot m_{s-1}\left(\vartheta _0\right)^2}{ P
    (\vartheta_0) }+\frac{ \dot P
    (\vartheta_0)^2 }{2 P
    (\vartheta_0)^2 }\right)+O\left(\sqrt{t} (\vartheta_{\tau _\zT}^*-\vartheta _0 )^2 \right). 
\end{align*}

Remind that $\delta \in \left(\frac{1}{2},1\right)$ and  the MME
$\vartheta_{\tau _\zT}^* $ satisfies \eqref{22}. Hence, 
for any fixed $v\in (0,1]$
\begin{align*}
\sqrt{t}\Ex_{\vartheta _0}\left|\vartheta_{\tau _\zT}^*-\vartheta
_0\right|^2\leq \frac{C\sqrt{t}}{\tau _\zT^2}\leq
\frac{C\,\sqrt{v}T^{1/2}}{T^{2\delta }}\longrightarrow 0. 
\end{align*}
Further,
\begin{align*}
&\frac{\sqrt{t}\left(\vartheta_{\tau _\zT}^*-\vartheta _0\right)}{ {\rm
      I}_b(\vartheta _0) }\left[ {\rm I}_b(\vartheta _0)- \frac{1}{t}\sum_{s=\tau
      _\zT+1}^{t}\left(\frac{f^2\dot m_{s-1}\left(\vartheta
      _0\right)^2}{ P (\vartheta_0) }+\frac{ \dot P (\vartheta_0)^2
    }{2 P (\vartheta_0)^2 }\right) \right]\\
 &\qquad \quad
  =\frac{\sqrt{t}\left(\vartheta_{\tau _\zT}^*-\vartheta _0\right)}{ {\rm
      I}_b(\vartheta _0) }\frac{1}{t}\sum_{s=\tau _\zT+1}^{t}\left[ \frac{f^2\dot
      B^*\left(\vartheta _0,\vartheta _0\right)^2}{\left[1-A\left(\vartheta
        _0\right)^2 \right] P (\vartheta_0) }  -  \frac{f^2\dot
    m_{s-1}\left(\vartheta _0\right)^2}{ P (\vartheta_0) } \right]\\
&\qquad \quad
  =\frac{\sqrt{t}\left(\vartheta_{\tau _\zT}^*-\vartheta _0\right)f^2}{ {\rm
      I}_b(\vartheta _0)P (\vartheta_0) } \frac{1}{t}\sum_{s=\tau _\zT+1}^{t}\left[ \frac{\dot
      B^*\left(\vartheta _0,\vartheta _0\right)^2}{\left[1-A\left(\vartheta
        _0\right)^2 \right]  }  -  {\dot
    m_{s-1}\left(\vartheta _0\right)^2}{  } \right]\\
 &\qquad \quad \longrightarrow 0.
\end{align*}
Therefore for any fixed $v\in \left(0,1\right)$ and $t=vT$ uniformly on $\vartheta _0\in\KK$
\begin{align*}
\sqrt{t}
\left(\vartheta _{t,T}^\star-\vartheta _0 \right)\Longrightarrow
            {\cal N}\left(0, {\rm I}_b(\vartheta _0) ^{-1}\right),
\end{align*}
and 
\begin{align*}
{t}
\Ex_{\vartheta _0}\left(\vartheta _{t,T}^\star-\vartheta _0 \right)^2\longrightarrow
             {\rm I}_b(\vartheta _0) ^{-1}.
\end{align*}
\end{proof}

\begin{remark}
\label{R7}
{\rm The One-step MLE-processes in the cases $\vartheta =f $, $\vartheta =a
  $, $\vartheta =\sigma ^2 $ can be constructed following the same lines. We do
  not present the corresponding calculations because it will be mainly
  repetition of the given above proof. 

If the unknown parameter is two-dimensional, say, $\vartheta
=\left(f,a\right)^\top=\left(\theta _1,\theta _2\right)^\top$, then two-dimensional
One-step MLE-process $\vartheta _{t,T}^\star=\left(\theta _{1,t,T}^\star,
\theta _{2,t,T}^\star\right)^\top$ can be constructed as follows. Introduce MMEs
$\theta _{\tau _\zT}^*=(\theta _{1,\tau _\zT}^*,\theta _{2,\tau
  _\zT}^*)^\top$, $\theta _{1,\tau _\zT}^* =f_{\tau _\zT}^* $ and $\theta _{2,\tau _\zT}^*
=a_{\tau _\zT}^* $, where $\tau _\zT=\left[T^\delta \right], \delta  \in
(1/2,1]$ and 
\begin{align*}
\theta _{1,\tau _\zT}^* &=2^{-1/2}b^{-1}\left[{S_{1,{\tau _\zT}}\left(X^{\tau _\zT}\right)(1+\theta
  _{2,\tau _\zT}^*)-2\sigma ^2 }\right]^{1/2} ,\\
 \theta _{2,\tau  _\zT}^* &=\frac{S_{1,{\tau _\zT}}\left(X^{\tau
     _\zT}\right)+S_{2,{\tau _\zT}}\left(X^{\tau _\zT}\right)-\sigma 
  ^2}{S_{1,{\tau _\zT}}\left(X^{\tau _\zT}\right)-2\sigma ^2} .
\end{align*}
Denote 
\begin{align*}
M\left(\vartheta,t \right)=\theta _1m_t\left(\vartheta \right),\; \dot
{\rm M}\left(\vartheta,t \right)= \left( \dot M_f\left(\vartheta,t \right),\dot
M_a\left(\vartheta,t \right)\right) ^\top ,\; \dot{\rm P}\left(\vartheta
\right)=\left(\dot{\rm P}_f\left(\vartheta \right),\dot{\rm
  P}_a\left(\vartheta \right) \right)^\top
.
\end{align*}
The equations for $\dot M_f\left(\vartheta,t \right)$ and $\dot
M_a\left(\vartheta,t \right)$ we obtain by differentiation of the equation 
\begin{align*}
M\left(\vartheta,t \right)&={\rm A}\left(\vartheta \right)M\left(\vartheta,t-1 \right)+ {\rm
  E}\left(\vartheta \right)M\left(\vartheta_0,t-1 \right)+{\rm
  B}\left(\vartheta,\vartheta _0 \right)\zeta _t\left(\vartheta
_0\right),\qquad t\geq 1,
\end{align*}
by $\theta _1$ and $\theta _2$ correspondingly. For definition of the functions ${\rm A}\left(\cdot  \right), {\rm
  E}\left(\cdot \right)$ and ${\rm
  B}\left(\cdot,\cdot\right)$ see \eqref{34}
 Recall that the Fisher information
matrix ${\bf I}\left(\vartheta \right)$ is defined by the relations
\eqref{32},\eqref{33} and \eqref{35}. Suppose that this matrix is
uniformly on $\vartheta \in\Theta $ non degenerate.  Then the  One-step MLE-process is given  by
the equality
\begin{align}
\label{41}
\vartheta _{t,T}^\star&=\theta _{\tau _\zT}^*+\frac{{\bf I}(\theta _{\tau
    _\zT}^* )^{-1}}{t-\tau _\zT} \sum_{s=\tau
    _\zT+1}^{t}\left[\frac{\left[X_s-M(\vartheta_{\tau _\zT}^*
      ,s-1  )\right]}{ P (\vartheta_{\tau _\zT}^*)}\dot
    {\rm M}(\vartheta_{\tau _\zT}^*,{s-1} )\right.\nonumber\\
 & \quad \left.+   \left( \left[X_s-M(\vartheta_{\tau _\zT}^*,{s-1}
        )\right]^2- P(\vartheta_{\tau _\zT}^* )\right)\frac{ \dot
      {\rm P}(\vartheta_{\tau _\zT}^* )
    }{2 P(\vartheta_{\tau _\zT}^* )^2}\right],\qquad t\in \left[ \tau
    _\zT+2,T\right].
\end{align}
It can be shown that for any $v\in (0,T]$, if we put $t=vT$ then  the normalized
  difference is asymptotically normal:
\begin{align*}
&\sqrt{t}\left(\vartheta _{t,T}^\star-\vartheta _0\right)\Longrightarrow \xi\left(\vartheta _0\right)
\sim {\cal
  N}\left(0,{\bf I}(\theta _0 )^{-1} \right),\nonumber\\
&{t}\Ex_{\vartheta _0}  \left\|\vartheta _{t,T}^\star-\vartheta
_0\right\|^2\longrightarrow \Ex_{\vartheta _0}\left\|\xi \left(\vartheta _0\right)\right\|^2.
\end{align*}

}
\end{remark}

\section{MLE and BE}

Consider the model of observations \eqref{1}-\eqref{2}, where the
unknown parameter is $\vartheta =b\in\Theta =\left(\alpha _b,\beta
_b\right),\alpha _b>0$. Below we study the MLE $\hat\vartheta_\zT $ and BE
$\tilde\vartheta_\zT $ defined by the usual relations
\begin{align*}
L(\hat\vartheta_\zT,X^T)=\sup_{\vartheta \in\Theta
}L(\hat\vartheta,X^T),\qquad \qquad
\tilde\vartheta_\zT=\frac{\int_{\Theta }^{}\vartheta p\left(\vartheta
  \right)L(\hat\vartheta,X^T){\rm d}\vartheta }{\int_{\Theta }^{}
  p\left(\vartheta \right)L(\hat\vartheta,X^T){\rm d}\vartheta}.
\end{align*}
Here $p\left(\vartheta \right),\vartheta \in\Theta$ is continuous, positive on
$\Theta $ density a priory. 

Recall the notation
\begin{align*}
 P\left(\vartheta \right)& =\sigma ^2+f^2\gamma _*\left(\vartheta
  \right),\qquad \qquad   A\left(\vartheta \right)=\frac{a\sigma ^2}{
    P\left(\vartheta \right)^2} \\  
   B^*\left(\vartheta,\vartheta_0\right)&=\frac{af\gamma
      _*\left(\vartheta \right)\sqrt{ P \left(\vartheta_0 \right)}}{ P
    \left(\vartheta \right)},\qquad 
 \dot  B^*\left(\vartheta_0,\vartheta_0\right)=\left.\frac{\partial
    B^*\left(\vartheta,\vartheta_0\right)}{\partial \vartheta
  }\right|_{\vartheta =\vartheta _0}= \frac{af\sigma ^2\dot\gamma
   _*\left(\vartheta_0 \right)}{ P 
      \left(\vartheta_0\right)^{3/2} }.\\
{\rm I}_b\left(\vartheta _0\right)&=\frac{f^2\dot B^*\left(\vartheta
    _0,\vartheta _0\right)^2}{\left[1-A\left(\vartheta_0
     \right)^2\right] P\left(\vartheta_0 \right)}+\frac{\dot
     P\left(\vartheta_0 \right)^2}{2 P\left(\vartheta_0
  \right)^2}\\&=\frac{f^2\dot \gamma _*\left(\vartheta \right)^2}{ P
  \left(\vartheta _0\right)}\left[ \frac{a^2\sigma ^4}{ P
  \left(\vartheta _0\right)^2-a^2\sigma ^4}+\frac{1}{2 P
  \left(\vartheta _0\right)}\right].
\end{align*}

Below it is shown (see \eqref{45}-\eqref{46}) that the family of measures is
LAN. Therefore we have Hajek-Le Cam's lower bound: for any estimator
$\bar\vartheta _T$ and any $\vartheta _0\in\Theta $
\begin{align*}
\lim_{\nu \rightarrow 0}\Liminf_{T\rightarrow \infty }\sup_{\left|\vartheta -\vartheta _0\right|\leq \nu }T\Ex_\vartheta
\left(\bar\vartheta _T-\vartheta \right)^2\geq {\rm I}_b\left(\vartheta
_0\right)^{-1} . 
\end{align*}
This bound allows us to give the following definition

Define the asymptotically efficient estimator $\vartheta _T^\circ$ as
estimator   for which the equality
\begin{align*}
\lim_{\nu \rightarrow 0}\lim_{T\rightarrow \infty }\sup_{\left|\vartheta -\vartheta _0\right|\leq \nu }T\Ex_\vartheta
\left(\vartheta _T^\circ-\vartheta \right)^2= {\rm I}_b\left(\vartheta
_0\right)^{-1} . 
\end{align*}
holds for all $\vartheta _0\in\Theta $.

\begin{theorem}
\label{T2} 
  The MLE and BE are
consistent, asymptotically normal,
\begin{align*}
\sqrt{T}\left(\hat\vartheta_\zT-\vartheta _0\right)\Longrightarrow {\cal
  N}\left(0,{\rm I}_b\left(\vartheta _0\right)^{-1}\right),\qquad \qquad
\sqrt{T}\left(\tilde\vartheta_\zT-\vartheta _0\right)\Longrightarrow {\cal
  N}\left(0,{\rm I}_b\left(\vartheta _0\right)^{-1}\right),
\end{align*}
 the polynomial  moments converge and the
both estimators are asymptotically efficient.
\end{theorem}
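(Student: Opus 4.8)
The plan is to follow the Ibragimov--Khasminskii scheme applied to the normalized likelihood ratio. Fix the true value $\vartheta _0\in\Theta $, set $\vartheta _u=\vartheta _0+uT^{-1/2}$ for $u$ in the growing set $U_T=\{u:\vartheta _u\in\Theta \}$, and introduce
$$Z_T(u)=\frac{L(\vartheta _u,X^T)}{L(\vartheta _0,X^T)},\qquad u\in U_T.$$
Using \eqref{10} together with the innovation representation $X_t-M_{t-1}(\vartheta _0)=\sqrt{P(\vartheta _0)}\,\zeta _t(\vartheta _0)$ with i.i.d. standard Gaussian $\zeta _t(\vartheta _0)$, the log-ratio is an explicit functional of $\{\zeta _t(\vartheta _0)\}$, of the predictors $m_{t-1}(\vartheta _u)$ and of $P(\vartheta _u)$. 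Once three standard hypotheses on $Z_T$ are verified---convergence of finite-dimensional distributions, a mean-square Hölder bound for the increments of $Z_T^{1/2}$, and an exponential tail estimate---the general theorems of Ibragimov and Khasminskii on the likelihood-ratio method yield \emph{simultaneously} all the asserted conclusions for the MLE $\hat\vartheta _\zT $ and the Bayesian estimator $\tilde\vartheta _\zT $: consistency, the stated asymptotic normality, convergence of all polynomial moments, and asymptotic efficiency (the last because the lower bound $\mathrm{I}_b(\vartheta _0)^{-1}$ is attained).

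First I would record the marginal (LAN) convergence, already prepared in \eqref{45}--\eqref{46}: the expansion
$$\ln Z_T(u)=u\,\Delta _T(\vartheta _0)-\tfrac12 u^2\,{\rm I}_b(\vartheta _0)+r_T(u),$$
with $\Delta _T(\vartheta _0)\Rightarrow {\cal N}(0,{\rm I}_b(\vartheta _0))$ and $r_T(u)\to 0$ in $\Pb_{\vartheta _0}$-probability for each fixed $u$, identifies the limit process as $Z(u)=\exp\{u\Delta -\tfrac12 u^2{\rm I}_b(\vartheta _0)\}$ with $\Delta \sim {\cal N}(0,{\rm I}_b(\vartheta _0))$ and supplies the finite-dimensional convergence $Z_T(\cdot )\Rightarrow Z(\cdot )$. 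This step rests on the central limit theorem for the (martingale-difference) score $\Delta _T$ relative to ${\goth F}_t^X$ and on the derivative bounds of Lemma~\ref{L2}, which control the remainder $r_T$.

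Next I would establish the two tightness/decay estimates. Because the model is conditionally Gaussian, $\Ex_{\vartheta _0}|Z_T^{1/2}(u_1)-Z_T^{1/2}(u_2)|^2$ is expressible through the Hellinger affinity between the laws at $\vartheta _{u_1}$ and $\vartheta _{u_2}$; since $P(\vartheta )$ and $m_{t-1}(\vartheta )$ depend smoothly on $\vartheta $ and, by Lemma~\ref{L2}, have moments bounded uniformly in $t$ and $\vartheta _0$, one obtains
$$\Ex_{\vartheta _0}\bigl|Z_T^{1/2}(u_1)-Z_T^{1/2}(u_2)\bigr|^2\le C\,|u_1-u_2|^2,$$
with $C$ uniform on compacts $\KK\subset\Theta $. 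For the tail, $\Ex_{\vartheta _0}Z_T^{1/2}(u)$ equals the Hellinger affinity of $\Pb_{\vartheta _0}$ and $\Pb_{\vartheta _u}$ on the first $T$ observations; the innovation factorization lets this affinity factorize step by step, and the per-step Hellinger contrast is comparable to $\tfrac12{\rm I}_b(\vartheta _0)(u/\sqrt T)^2$ up to uniform constants, giving
$$\Ex_{\vartheta _0}Z_T^{1/2}(u)\le e^{-\kappa u^2},\qquad \kappa >0,$$
uniformly in $u\in U_T$ and $\vartheta _0\in\KK$. Together with the marginal convergence these are exactly the hypotheses needed to secure tightness of $Z_T(\cdot )$ and the uniform integrability underlying moment convergence, whence the statement follows; the uniformity on compacts is inherited from the uniform constants in Lemmas~\ref{L1}--\ref{L2} and in \eqref{27}.

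The hard part will be the exponential tail estimate, and specifically making it \emph{uniform} over the whole growing range $|u|\le \sqrt T\,(\beta _b-\alpha _b)$. Unlike the i.i.d. case the observations are dependent and the competing predictor $m_{t-1}(\vartheta _u)$ is produced by its own filter driven by the true innovations, so the per-step Hellinger increment must be bounded below by a strictly positive multiple of $(\vartheta _u-\vartheta _0)^2$ that does not degenerate as $t$ grows. This is where separation of the model in $b$ enters: the law at $\vartheta $ depends on $b$ only through the smooth, strictly monotone quantities $\gamma _*(\vartheta )$ and $P(\vartheta )$ with $\dot\gamma _*(\vartheta _0)>0$, so the one-step Kullback/Hellinger contrast stays bounded away from zero, and the exponential mixing of the Gaussian AR structure forces the accumulated contrast to grow linearly in $T$. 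Carrying this lower bound through uniformly for large $|u|$, where the local Taylor expansion is unavailable and one must argue directly with the affinity, is the delicate point; the remaining estimates are routine consequences of Lemmas~\ref{L1}--\ref{L2}.
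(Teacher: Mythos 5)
Your proposal follows the same route as the paper: both verify the three Ibragimov--Khasminskii conditions for the normalized likelihood ratio $Z_T(u)$ --- the LAN expansion \eqref{45}--\eqref{46} obtained from the innovation representation and the derivative bounds of Lemma \ref{L2}, the H\"older bound \eqref{47}, and an exponential-in-$u^2$ tail estimate whose identifiability input is exactly the strict monotonicity $\dot\gamma_*(\vartheta)>0$ established in \eqref{52} --- and then invoke Theorems 3.1.1 and 3.2.1 of Ibragimov--Khasminskii to get consistency, asymptotic normality, moment convergence and efficiency simultaneously. The only (harmless) divergence is in the third condition: you propose the Hellinger-affinity form $\Ex_{\vartheta_0}Z_T^{1/2}(u)\le e^{-\kappa u^2}$ via a step-by-step factorization, whereas the paper proves the probability form \eqref{48} by centering $\Pi(u)=\ln Z_T(u)$, treating the regions $|u\varphi_T|\le\nu$ and $|u\varphi_T|\ge\nu$ separately, and applying Chebyshev with the $2N$-th moment bounds \eqref{53}--\eqref{55}; both are admissible versions of the IH hypothesis, and your affinity argument --- provided it is carried out conditionally on ${\goth F}_{t-1}^X$, since the per-step Gaussian laws have data-dependent means and the affinity does not factor unconditionally --- is if anything slightly cleaner for large $|u|$.
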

\begin{proof}
The mentioned properties of the estimators we will proof with the help of the
general Theorems 3.1.1 and 3.2.1 in \cite{IH81}, i.e., we verify the
conditions of these theorems for our model of observations.  Introduce the normalized  ($\varphi _T=T^{-1/2}$) likelihood ratio function
\begin{align*}
Z_T\left(u\right)=\frac{L(\vartheta _0+\varphi _Tu,X^T)}{L(\vartheta
  _0,X^T)},\qquad u\in\UU_T=\left( \sqrt{T}\left(\alpha _T-\vartheta
_0\right),\sqrt{T}\left(\beta _T-\vartheta _0\right)\right)
\end{align*}

Therefore we have to prove the following properties of the process
$Z_T\left(u\right),u\in\UU_T $. 
{\it 
\begin{enumerate}
\item The process $Z_T\left(\cdot \right)$ admits the representation
\begin{align}
\label{45}
Z_T\left(u \right)=\exp\left(u\Delta_\zT (\vartheta _0,X^T)-\frac{u^2}{2}{\rm
  I}_b\left(\vartheta _0\right)+r_\zT(\vartheta _0,u,X^T)\right) ,\qquad u\in\UU_T,
\end{align}
where  uniformly on compacts $\KK\subset\Theta $ the
convergence   $r_\zT(\vartheta _0,u,X^T)\rightarrow 0$ holds,
\begin{align*}
\Delta_\zT (\vartheta _0,X^T)=\frac{1}{\sqrt{T P\left(\vartheta_0
    \right)}}\sum_{t=1}^{T} \left[ \zeta _t\left(\vartheta_0
  \right)f\dot m_{t-1}\left(\vartheta_0 \right)+\left[\zeta
    _t\left(\vartheta_0 \right)^2-1 \right]\frac{\dot P \left(\vartheta_0
    \right)}{2\sqrt{ P \left(\vartheta_0 \right)}}\right]
\end{align*}
and uniformly on $\KK$
\begin{align}
\label{46}
\Delta_\zT (\vartheta _0,X^T)\Longrightarrow  {\cal
  N}\left(0,{\rm I}_b\left(\vartheta _0\right)\right).
\end{align}

\item  There exists constant $C>0$ such that
\begin{align}
\label{47}
\sup_{\vartheta _0\in\KK}\Ex_{\vartheta _0}\left|Z_T\left(u_2 \right)^{1/2}-Z_T\left(u_1 \right)^{1/2}
\right|^2\leq C\,\left|u_1-u_2\right|^2 .
\end{align}

\item There exists a constant $\kappa >0$ and for any $N>0$ there is a
  constant $C>0$ such that
\begin{align}
\label{48}
\sup_{\vartheta _0\in\KK}\Pb_{\vartheta _0}\left(Z_T\left(u \right)>e^{-\kappa u^2}\right)\leq
\frac{C}{\left|u\right|^N}. 
\end{align}
\end{enumerate}
}

Let us verify \eqref{45}-\eqref{46}. Using Taylor expansions  we can write
(below $\vartheta _u=\vartheta _0+\varphi _T u$)
\begin{align*}
\ln
Z_T\left(u\right)&=-\frac{1}{2}\sum_{t=1}^{T}\left[
\frac{\left(X_t-fm_{t-1}\left(\vartheta_u 
    \right)\right)^2}{{  P \left(\vartheta_u \right) }}+\ln \frac{ P
    \left(\vartheta_u \right)}{ P \left(\vartheta_0 \right) }-
  \frac{\left(X_t-fm_{t-1}\left(\vartheta_0 \right)\right)^2}{{  P
      \left(\vartheta_0 \right) }} \right]\\
&=\frac{u}{\sqrt{T}}\sum_{t=1}^{T}\left[\frac{\zeta _t\left(\vartheta
    _0\right)f\dot m_{t-1}\left(\vartheta _0\right)}{\sqrt{ P
      \left(\vartheta _0\right)}}+ \frac{\left(\zeta _t\left(\vartheta
    _0\right)^2-1\right)\dot P \left(\vartheta _0\right)}{2 P
    \left(\vartheta _0\right)}\right]\\
&\qquad -\frac{u^2}{2T}\left[\frac{f^2\dot m_{t-1}\left(\vartheta
    _0\right)^2}{ P \left(\vartheta _0\right)}+ \frac{\zeta _t\left(\vartheta
    _0\right)^2\dot P \left(\vartheta _0\right)^2}{ P \left(\vartheta
    _0\right)^2}-\frac{\dot P \left(\vartheta _0\right)^2}{2 P
    \left(\vartheta _0\right)^2}\right] +O\left(T^{-1/2}\right).
\end{align*}
Now the representation \eqref{45}-\eqref{46} follows from the CLT and the LLN:
\begin{align*}
&\frac{1}{\sqrt{T}}\sum_{t=1}^{T}\left[\frac{\zeta _t\left(\vartheta
    _0\right)f\dot m_{t-1}\left(\vartheta _0\right)}{\sqrt{ P
      \left(\vartheta _0\right)}}+ \frac{\left(\zeta _t\left(\vartheta
    _0\right)^2-1\right)\dot P \left(\vartheta _0\right)}{2 P
    \left(\vartheta _0\right)}\right]\Longrightarrow  {\cal
  N}\left(0,{\rm I}_b\left(\vartheta _0\right)\right),\\
&\frac{1}{T}\sum_{t=1}^{T}\left[\frac{f^2\dot m_{t-1}\left(\vartheta
    _0\right)^2}{ P \left(\vartheta _0\right)}+ \frac{\zeta _t\left(\vartheta
    _0\right)^2\dot P \left(\vartheta _0\right)^2}{ P \left(\vartheta
    _0\right)^2}-\frac{\dot P \left(\vartheta _0\right)^2}{2 P
    \left(\vartheta _0\right)^2}\right]\longrightarrow {\rm I}_b\left(\vartheta _0\right).
\end{align*}

 Let us denote 
\begin{align*}
\Pi \left(u\right)=-\frac{1}{2}\sum_{t=1}^{T}\left[
\frac{\left(X_t-fm_{t-1}\left(\vartheta_u 
    \right)\right)^2}{{  P \left(\vartheta_u \right) }}+\ln \frac{ P
    \left(\vartheta_u \right)}{ P \left(\vartheta_0 \right) }-
  \frac{\left(X_t-fm_{t-1}\left(\vartheta_0 \right)\right)^2}{{  P
      \left(\vartheta_0 \right) }} \right]
\end{align*}
and calculate its derivative
\begin{align*}
\dot\Pi \left(u\right)&=\frac{\partial }{\partial
    u}\Pi \left(u\right)  = \frac{\varphi _T}{2}\sum_{t=1}^{T}\left[
  \frac{2\left(X_t-fm_{t-1}\left(\vartheta_u \right)\right)f\dot
    m_{t-1}\left(\vartheta_u \right) }{{  P \left(\vartheta_u \right)
  }}-\frac{\dot P \left(\vartheta_u \right)}{ P \left(\vartheta_u
    \right) }\right.\\
&\quad\qquad \qquad \qquad  \left.
   +\frac{\left(X_t-fm_{t-1}\left(\vartheta_u \right)\right)^2\dot P
    \left(\vartheta_u \right)}{ P \left(\vartheta_u \right)^2}\right].
\end{align*}
Note that $X_t-fm_{t-1}\left(\vartheta _u\right)=\zeta _t\left(\vartheta
_u\right)\sqrt{ P \left(\vartheta _u\right)},t\geq 1 $, where $\zeta
_t\left(\vartheta _u\right),t\geq 1$ under measure $\Pb_{\vartheta _u}$ are
i.i.d.  r.v.'s $\zeta _t\left(\vartheta _u\right)\sim {\cal
  N}\left(0,1\right)$. Therefore, with $\Pb_{\vartheta _u} $ probability 1 we have
\begin{align*}
\dot \Pi \left(u\right)&=\varphi _T \sum_{t=1}^{T}\left[
  \frac{\zeta _t\left(\vartheta _u\right)f\dot m_{t-1}\left(\vartheta_u
    \right) }{{ \sqrt{ P \left(\vartheta_u \right)} }}
  +\frac{\left[\zeta _t\left(\vartheta _u\right)^2-1\right]\dot P \left(\vartheta_u
    \right)}{2 P \left(\vartheta_u \right)}\right]
\end{align*}
and
\begin{align*}
\Ex_{\vartheta _u}\dot \Pi \left(u\right)^2&=\varphi
_T^2\sum_{t=1}^{T}\Ex_{\vartheta _u}\left[ \frac{\zeta _t\left(\vartheta
    _u\right)f\dot m_{t-1}\left(\vartheta_u \right) }{{ \sqrt{ P
        \left(\vartheta_u \right)} }} +\frac{\left[\zeta
      _t\left(\vartheta _u\right)^2-1\right]\dot P \left(\vartheta_u
    \right)}{2 P \left(\vartheta_u
    \right)}\right]^2\\
 &=\varphi_T^2\sum_{t=1}^{T}\left[
  \frac{f^2\Ex_{\vartheta _u}\left[\zeta _t\left(\vartheta
      _u\right)^2\dot m_{t-1}\left(\vartheta_u \right)^2\right] }{{  P
      \left(\vartheta_u \right) }} +\frac{\dot P \left(\vartheta_u
    \right)^2\Ex_{\vartheta _u}\left[\zeta _t\left(\vartheta
      _u\right)^2-1\right]^2}{4 P \left(\vartheta_u \right)^2}\right]\\
 &=\frac{1}{T}\sum_{t=1}^{T}\left[ \frac{f^2\Ex_{\vartheta _u}\left[\dot
      m_{t-1}\left(\vartheta_u \right)^2\right] }{{  P \left(\vartheta_u
      \right) }} +\frac{\dot P \left(\vartheta_u \right)^2}{2 P
    \left(\vartheta_u \right)^2}\right]\\
 &= \frac{f^2\dot B\left(\vartheta _u,\vartheta _u\right)^2}{{  P \left(\vartheta_u
      \right) }\left(1-A\left(\vartheta _u\right)\right)^2 } +\frac{\dot P \left(\vartheta_u \right)^2}{2 P
    \left(\vartheta_u \right)^2}\leq C.
\end{align*}

Hence we can write
\begin{align*}
&\Ex_{\vartheta _0}\left|Z_T\left(u_2 \right)^{1/2}-Z_T\left(u_1 \right)^{1/2}
  \right|^2=\Ex_{\vartheta _0}\left|\int_{u_1}^{u_2} \frac{\partial }{\partial
    u} Z_T\left(u \right)^{1/2}{\rm d}u \right|^2\\
&\qquad \leq \frac{\left(u_2-u_1\right)}{4}\int_{u_1}^{u_2}\Ex_{\vartheta _0} Z_T\left(u
  \right)\dot \Pi \left(u\right)^2{\rm d}u=
  \frac{\left(u_2-u_1\right)}{4}\int_{u_1}^{u_2}\Ex_{\vartheta _u} 
\dot \Pi \left(u\right)^2{\rm d}u \\
&\qquad \leq C\left(u_2-u_1\right)^2,
\end{align*}
where the constant $C>0$ can be chosen not  depending on $\vartheta _0$.  The estimate \eqref{47} is proved too. 

To verify the last estimate \eqref{48} we write
\begin{align*}
\Pb_{\vartheta _0}\left(Z_T\left(u \right)>e^{-\kappa u^2}
\right)&=\Pb_{\vartheta _0}\left(\Pi \left(u\right) > -\kappa u^2  \right)\\
& =\Pb_{\vartheta _0}\Bigl(\Pi \left(u\right)-\Ex_{\vartheta _0}\Pi \left(u\right) > -\kappa u^2 -\Ex_{\vartheta _0}\Pi \left(u\right) \Bigr) .
\end{align*}
Note that
\begin{align*}
-2\Ex_{\vartheta _0}\Pi \left(u\right)&=\sum_{t=1}^{T}
\frac{\Ex_{\vartheta _0}\Bigl[\zeta _t\left(\vartheta _0\right)\sqrt{ P \left(\vartheta_0 \right)}-f\left[m_{t-1}\left(\vartheta_u 
    \right)-m_{t-1}\left(\vartheta_0 
    \right)\right]\Bigr]^2}{{  P \left(\vartheta_u \right) }}\nonumber\\
&\qquad \qquad \qquad +{T}\ln \frac{ P
    \left(\vartheta_u \right)}{ P \left(\vartheta_0 \right)
}-\sum_{t=1}^{T}\Ex_{\vartheta _0}\zeta _t\left(\vartheta _0\right)^2 \nonumber\\
&=
\frac{f^2}{{  P \left(\vartheta_u \right) }} \sum_{t=1}^{T}   \Ex_{\vartheta _0} \left[m_{t-1}\left(\vartheta_u 
    \right)-m_{t-1}\left(\vartheta_0 
    \right)\right]^2\nonumber\\
&\qquad \qquad \qquad +T\left(1+\ln \frac{ P
    \left(\vartheta_0 \right)}{ P \left(\vartheta_u \right)
}-\frac{ P \left(\vartheta_0 \right)}{  P \left(\vartheta_u \right)}\right).
\end{align*}
Consider two regions of $u$. First suppose that $\left|u\varphi
_\zT\right|\leq \nu $ with some small $\nu >0$. Then using expansions at the
vicinity of $\vartheta _0$ we can write
\begin{align*}
-\Ex_{\vartheta _0}\Pi \left(u\right)&=\left(\frac{u^2f^2\dot B^*\left(\vartheta
  _0,\vartheta _0\right)^2}{{2  P \left(\vartheta_u \right)
    \left(1-A\left(\vartheta _0\right)^2\right)}}+\frac{u^2\dot  P
  \left(\vartheta _0\right)^2}{2 P \left(\vartheta
  _0\right)^2}\right)\left(1+o\left(\nu \right) \right)\nonumber\\
&= \frac{u^2}{2}{\rm
  I}_b\left(\vartheta _0\right)\left(1+o\left(\nu \right) \right)\geq
\frac{u^2}{4}{\rm I}_b\left(\vartheta _0\right)\geq \kappa _1u^2
\end{align*}
for sufficiently small $\nu $. Remark that 
\begin{align*}
\inf_{\vartheta \in\Theta }{\rm I}_b\left(\vartheta\right)>0
\end{align*}
and the constant $\kappa _1$ can be chosen not depending of $\vartheta _0$.

 Let $\left|u\varphi _T\right|\geq \nu
$. Consider the difference of two equations
\begin{align*}
m_{t-1}\left(\vartheta _0\right)&=am_{t-2}\left(\vartheta
_0\right)\,+B^*\left(\vartheta _0,\vartheta _0\right)\zeta _{t-1}\left(\vartheta _0\right),\\
m_{t-1}\left(\vartheta_u \right)&=A\left(\vartheta_u \right)m_{t-2}\left(\vartheta_u
\right)+ \left[a-A\left(\vartheta_u \right)\right]m_{t-2}\left(\vartheta_0
\right)      +B^*\left(\vartheta_u,\vartheta _0\right)\zeta _{t-1}\left(\vartheta _0\right),
\end{align*}  
 and write
\begin{align*}
m_{t-1}\left(\vartheta_u \right)-m_{t-1}\left(\vartheta
_0\right)&=A\left(\vartheta_u \right)\left[m_{t-2}\left(\vartheta_u
  \right)-m_{t-2}\left(\vartheta_0 \right) \right]\\
&\qquad \qquad +\left[B^*\left(\vartheta_u,\vartheta
_0\right)-B^*\left(\vartheta _0,\vartheta _0\right)\right]\zeta
_{t-1}\left(\vartheta _0\right) . 
\end{align*}
Therefore
\begin{align*}
\Ex_{\vartheta _0}\left[m_{t-1}\left(\vartheta_u \right)-m_{t-1}\left(\vartheta
_0\right)\right]^2&=A\left(\vartheta_u \right)^2\Ex_{\vartheta _0}\left[m_{t-2}\left(\vartheta_u
  \right)-m_{t-2}\left(\vartheta_0 \right) \right]^2\\
&\qquad \qquad +\left[B^*\left(\vartheta_u,\vartheta
_0\right)-B^*\left(\vartheta _0,\vartheta _0\right)\right]^2.
\end{align*}
We suppose that $m_{t}\left(\vartheta_u \right) $ and
$m_{t}\left(\vartheta_0 \right) $ are stationary processes. Hence
\begin{align*}
\Ex_{\vartheta _0}\left[m_{t-1}\left(\vartheta_u \right)-m_{t-1}\left(\vartheta
_0\right)\right]^2=\frac{\left[B^*\left(\vartheta_u,\vartheta
_0\right)-B^*\left(\vartheta _0,\vartheta _0\right)\right]^2}{1-A\left(\vartheta_u
  \right)^2} 
\end{align*}
and
\begin{align*}
-2\Ex_{\vartheta _0}\Pi
\left(u\right)&=\frac{Tf^2\left[B^*\left(\vartheta_u,\vartheta
    _0\right)-B^*\left(\vartheta _0,\vartheta _0\right)\right]^2
}{1-A\left(\vartheta_u \right)^2}+T\left(1+\ln \frac{ P \left(\vartheta
  _0\right)}{ P \left(\vartheta _u\right)}-\frac{ P \left(\vartheta
  _0\right)}{ P \left(\vartheta _u\right)} \right).
\end{align*}
Denote
\begin{align*}
G\left(\vartheta _u,\vartheta
_0\right)=\frac{f^2\left[B^*\left(\vartheta_u,\vartheta
    _0\right)-B^*\left(\vartheta _0,\vartheta _0\right)\right]^2
}{2\left(1-A\left(\vartheta_u \right)^2\right)}+\frac{1}{2}\left(1+\ln
\frac{ P \left(\vartheta _0\right)}{ P \left(\vartheta
  _u\right)}-\frac{ P \left(\vartheta _0\right)}{ P \left(\vartheta
  _u\right)}\right)
\end{align*}
and show that
\begin{align*}
g\left(\vartheta _0,\nu \right)=\inf_{\left|\vartheta -\vartheta _0\right|\geq
  \nu }G\left(\vartheta,\vartheta _0\right)>0. 
\end{align*}
To do this it is sufficient to verify that 
\begin{align*}
\inf_{\left|\vartheta -\vartheta _0\right|\geq \nu }\left| P
\left(\vartheta \right)- P \left(\vartheta_0 \right)\right|>0. 
\end{align*}
As $\left| P \left(\vartheta \right)- P \left(\vartheta_0
\right)\right|=f^2\left|\gamma _*\left(\vartheta \right)-\gamma _*\left(\vartheta_0
\right)\right| $ we check the condition $\dot\gamma _*\left(\vartheta
\right)>0$. We have (see \eqref{6})
\begin{align}
\label{52}
\dot\gamma _*\left(\vartheta \right)&=\vartheta 
+ \frac{\left(4\left[f^2\vartheta ^2-\sigma
  ^2\left(1-a^2\right) \right]f^2\vartheta +8\vartheta\sigma
  ^2f^2\right)}{ f^2  \left[\left({\sigma
  ^2\left(1-a^2\right)}{}-f^2\vartheta ^2\right)^2+ 4\vartheta^2\sigma
  ^2f^2\right]^{1/2}     }\nonumber\\
&=\vartheta 
+ \frac{\left[4f^2\vartheta ^2+4\sigma
  ^2\left(1+a^2\right) \right]f^2\vartheta}{ f^2  \left[\left({\sigma
  ^2\left(1-a^2\right)}-f^2\vartheta ^2\right)^2+ 4\vartheta^2\sigma
  ^2f^2\right]^{1/2}     }>0
\end{align}
and
\begin{align*}
\inf_{\vartheta_ 0\in\Theta }g\left(\vartheta _0,\nu \right)>0.
\end{align*}
Therefore, using the relation $T\geq u^2/\left(\beta _b-\alpha _b\right)^2$ we
can write 
\begin{align*}
\inf_{\left|u\right|\geq \nu \sqrt{T}}\Bigl(-\Ex_{\vartheta _0}\Pi \left(u\right)\Bigr)\geq
g\left(\vartheta _0,\nu \right)T \geq \frac{g\left(\vartheta _0,\nu
  \right)u^2}{\left(\beta _b-\alpha _b\right)^2} =\kappa _2u^2.
\end{align*}
We have
\begin{align*}
&\Pi \left(u\right)- \Ex_{\vartheta _0}\Pi
\left(u\right)=\sum_{t=1}^{T} \left[\zeta _t\left(\vartheta
  _0\right)^2-1\right]\left(\frac{ P \left(\vartheta _u\right)- P
    \left(\vartheta _0\right)}{2 P \left(\vartheta
    _u\right)}\right)\\
&\quad\quad \qquad   +\sum_{t=1}^{T}\frac{f \sqrt{ P \left(\vartheta
      _0\right)}}{ P \left(\vartheta _u\right)}{\zeta _t\left(\vartheta _0\right)\left[
      m_{t-1}\left(\vartheta _u\right)-m_{t-1}\left(\vartheta
      _0\right)\right]}\\
&\quad \quad \qquad   -\sum_{t=1}^{T}\frac{f^2\left({ \left[
      m_{t-1}\left(\vartheta _u\right)-m_{t-1}\left(\vartheta
      _0\right)\right]^2- \Ex_{\vartheta _0}\left[
      m_{t-1}\left(\vartheta _u\right)-m_{t-1}\left(\vartheta
      _0\right)\right]^2}{}\right)}{2 P \left(\vartheta _u\right)} \\
& \qquad=\Pi_1 \left(u\right)+\Pi_2 \left(u\right)-\Pi_3 \left(u\right),
\end{align*}
where for any integer $N\geq 1$
\begin{align}
\label{53}
\Ex_{\vartheta _0}\left|\Pi_1 \left(u\right)\right|^{2N}&=\Ex_{\vartheta
  _0}\left|\sum_{t=1}^{T} \left[\zeta _t\left(\vartheta
  _0\right)^2-1\right]\left(\frac{ P \left(\vartheta _u\right)- P
  \left(\vartheta _0\right)}{2 P \left(\vartheta _u\right)}\right)
\right|^{2N}\nonumber\\
 &=\left|u\varphi _T\dot P (\tilde\vartheta _u)
\right|^{2N}\Ex_{\vartheta _0}\left|\sum_{t=1}^{T} \left[\zeta
  _t\left(\vartheta _0\right)^2-1\right] \right|^{2N}\leq
C\left|u\right|^{2N},\\
 \Ex_{\vartheta _0}\left|\Pi_2
\left(u\right)\right|^{2N}&= \frac{f^{2N} { P \left(\vartheta
    _0\right)^{N}}}{ P \left(\vartheta _u\right)^{2N}}\Ex_{\vartheta
  _0}\left|\sum_{t=1}^{T}{\zeta _t\left(\vartheta _0\right)\left[
    m_{t-1}\left(\vartheta _u\right)-m_{t-1}\left(\vartheta _0\right)\right]}
\right|^{2N}\nonumber\\
 &\leq C \left|u\right|^{2N}T^{-N}\Ex_{\vartheta _0}
\left|\sum_{t=1}^{T}{\zeta _t\left(\vartheta _0\right)\dot
  m_{t-1}(\tilde\vartheta _u)} \right|^{2N}\leq C\left|u\right|^{2N},\nonumber
\end{align}
and
\begin{align}
\label{55}
\Ex_{\vartheta _0}\left|\Pi_3
\left(u\right)\right|^{2N}&=\frac{f^{4N}\left|u\right|^{4N}\varphi
  _T^{4N}}{2^{2N}  P \left(\vartheta _u\right)^{2N}} \Ex_{\vartheta
  _0}\left|\sum_{t=1}^{T}\Bigl[\dot m_{t-1}(\tilde\vartheta _u)^2-
  \Ex_{\vartheta _0}\dot m_{t-1}(\tilde\vartheta _u)^2\Bigr]
\right|^{2N}\nonumber \\
&\leq C\left|u\right|^{4N}\varphi  _T^{4N}T^N=C\left|u\right|^{2N}\;\frac{\left|u\right|^{2N}}{T}\leq C\left|u\right|^{2N}.
\end{align}
Here we twice used the estimate
\begin{align*}
\Ex_{\vartheta _0}\left|\sum_{t=1}^{T}K_t\left(\vartheta \right) \right|^{2N}\leq CT^N,
\end{align*}
which is valid for centered Gaussian time series $K_t\left(\vartheta
\right),t\geq 1$ with bounded variance and exponential mixing.  We used as
well the relation $\left|u\varphi _T\right|<\beta _b-\alpha _b$. Remark that
all constants $C>0$ in \eqref{53}-\eqref{55} can be chosen not depending
of $\vartheta _0$.

Denote $\kappa =2\left(\kappa _1\wedge \kappa _2\right)$. Now by Chebyshev
inequality and \eqref{53}-\eqref{55}
\begin{align*}
&\Pb_{\vartheta _0}\left(Z_T\left(u\right)>e^{-\kappa u^2}\right)\leq
\Pb_{\vartheta _0}\left(\left|\Pi \left(u\right)- \Ex_{\vartheta _0}\Pi
\left(u\right)\right|\geq \frac{\kappa u^2}{2}\right)\\
&\qquad \leq \Pb_{\vartheta _0}\left(\left|\Pi_1 \left(u\right)\right|\geq
\frac{\kappa u^2}{6}\right) +\Pb_{\vartheta _0}\left(\left|\Pi_2 \left(u\right)\right|\geq
\frac{\kappa u^2}{6}\right) +\Pb_{\vartheta _0}\left(\left|\Pi_3 \left(u\right)\right|\geq
\frac{\kappa u^2}{6}\right) \\
&\qquad \leq \frac{6^{2N}}{\kappa u^{2N}}\Ex_{\vartheta _0}\left|\Pi_1
\left(u\right) \right|^{2N}+ \frac{6^{2N}}{\kappa u^{2N}}\Ex_{\vartheta _0}\left|\Pi_2
\left(u\right) \right|^{2N}\frac{6^{2N}}{\kappa u^{2N}}\Ex_{\vartheta _0}\left|\Pi_3
\left(u\right) \right|^{2N} \leq \frac{C}{u^{2N}}.
\end{align*}
Therefore the conditions \eqref{45}-\eqref{48} are verified and the
estimators $\hat\vartheta _\zT$ and $\tilde\vartheta _\zT$  by Theorems 3.1.1
and 3.2.1 in \cite{IH81}  have
all mentioned in Theorem \ref{T2} properties.

\end{proof}

\section{Adaptive filter}
We are given  the partially observed system
\begin{align*}
X_t&=f\,Y_{t-1}+\sigma\, w_t,\qquad X_0,\qquad t=1,2,\ldots,\\
Y_t&=a\,Y_{t-1}+ b\, v_t,\qquad \;\;Y_0.
\end{align*}
Recall that if all parameters of the model $\vartheta =\left(f,\sigma
^2,a,b\right)$ are known, then the stationary version $m_t\left(\vartheta \right) $  of the conditional expectation
$m\left(\vartheta ,t\right)=\Ex_\vartheta \left(Y_t|X_s,s\leq t\right)$
satisfies the equation (see \eqref{7}, \eqref{6})
\begin{align}
\label{56}
m_t\left(\vartheta \right)&=am_{t-1}\left(\vartheta
\right)+\frac{af\gamma_*\left(\vartheta \right)}{\sigma
  ^2+f^2\gamma_*\left(\vartheta \right)}\left[X_t-fm_{t-1}\left(\vartheta
  \right)\right],\quad m_0\left(\vartheta \right),\quad  t\geq 1,\\ 
\gamma_*\left(\vartheta
\right)&=\frac{1}{2f^2}\left[{f^2b^2-\sigma ^2\left(1-a^2\right)}\right]
+\frac{1}{2f^2}\left[\left({\sigma
    ^2\left(1-a^2\right)}-b^2f^2\right)^2+4f^2b^2\sigma ^2\right]^{1/2}.\nonumber
\end{align}
Consider the problem of approximation of the random function $m_t\left(\vartheta
\right),t\geq 1 $ when  one of the parameters is unknown.

\subsection{Unknown parameter $b$}

 Suppose that the values $f>0$, $a^2\in [0,1)$ and $\sigma ^2>0$ are known and
   the only unknown parameter is $\vartheta =b\in\Theta =\left(\alpha
   _b,\beta_b\right)$, $\alpha _b>0$.  The proposed below construction is a
   direct analogue of the similar solutions discussed in \cite{Kut22} and
   \cite{Kut23a} for the models in continuous time like
   \eqref{1-8}-\eqref{1-9}.  First  by observations
   $X_0,X_1,\ldots, X_{\tau_ \zT}$ we calculate the MME $\vartheta _{\tau_ \zT}^*=b_{\tau_
     \zT}^* $ , ${\tau_ \zT}=\left[T^\delta \right], \delta \in
   (\frac{1}{2},1)$ (see \eqref{22})
\begin{align}
\label{57}
\vartheta _{\tau_ \zT}^*=2^{-1/2}f^{-1}\left[ \left(\frac{1}{\tau _\zT}\sum_{s=1}^{\tau _\zT} \left[X_s-X_{s-1}\right]^2 +\sigma ^2\right)\left(1+a\right)\right]^{1/2} .
\end{align}
Then we define the One-step MLE-process (see \eqref{36})
\begin{align}
\label{58}
\vartheta _{t,T}^\star&=\vartheta _{\tau _\zT}^*+\frac{1}{{\rm
      I}_b(\vartheta _{\tau _\zT}^*)\left(t-\tau _\zT\right) }\sum_{s=\tau
    _\zT+1}^{t}\left[\frac{\left[X_s-fm_{s-1}(\vartheta_{\tau _\zT}^*
        )\right]}{ P (\vartheta_{\tau _\zT}^*)}f\dot
    m_{s-1}(\vartheta_{\tau _\zT}^* )\right.\nonumber\\
 &\quad  \left.+   \left( \left[X_s-fm_{s-1}(\vartheta_{\tau _\zT}^*
     )\right]^2- P(\vartheta_{\tau _\zT}^* )\right)\frac{ \dot
       P(\vartheta_{\tau _\zT}^* )
    }{2 P(\vartheta_{\tau _\zT}^* )^2}\right],\quad t\in \left[ \tau
    _\zT+1,T\right].
\end{align}
Here $m_{s-1}(\vartheta_{\tau _\zT}^*,{s-1} ),s\geq \tau _\zT+1 $ satisfies the
equation \eqref{56}, where $b$ is replaced by $\vartheta _{\tau _\zT}^* $ and
$\dot m_{s-1}(\vartheta_{\tau _\zT}^*,{s-1} ),s\geq \tau _\zT+1 $ is obtained by
differentiation of \eqref{56} on $b$ and once more replacing $b$ by
$\vartheta _{\tau _\zT}^* $. Therefore for $s>\tau _\zT+1$ we can write 
\begin{align}
\label{59}
m_{s-1}(\vartheta_{\tau _\zT}^* )&=P(\vartheta_{\tau
    _\zT}^* )^{-1}   \left[ a\sigma ^2m_{s-2}(\vartheta_{\tau _\zT}^*
 )+af\gamma_*(\vartheta_{\tau _\zT}^*) X_{s-1} \right]  ,\\ 
\dot m_{s-1}(\vartheta_{\tau _\zT}^*)&=P(\vartheta_{\tau _\zT}^* )^{-2}
     {a\sigma ^2 \left[P(\vartheta_{\tau _\zT}^* ) \dot m_{s-2}(\vartheta_{\tau _\zT}^* ,s-2)-f^2\dot \gamma _* (\vartheta_{\tau _\zT}^* )\, m_{s-2}(\vartheta_{\tau _\zT}^* )   \right] }
\nonumber\\ & \qquad +\frac{af\sigma ^2\dot\gamma _*\left(\vartheta
  \right)}{P\left(\vartheta \right)^{2}}\, X_{s-1} .
\end{align}

 Here $P\left(\vartheta
\right)=\sigma ^2+f^2\gamma _*\left(\vartheta \right)$ and  the Fisher information
is 
\begin{align*}
{\rm I}_b(\vartheta )=\frac{\dot P_b\left(\vartheta_0
\right)^2 \left[P\left(\vartheta_0
\right)^2   +a^2\sigma ^4 \right]  }{2 P
  \left(\vartheta_0\right)^2 \left[P\left(\vartheta_0
\right)^2  -a^2\sigma ^4 \right]},
\end{align*}
where $A\left(\vartheta \right)=P\left(\vartheta \right)^{-1}a\sigma ^2 $ (see \eqref{28}).

The adaptive Kalman filter we introduce with the help of the process $
m^\star_{t,T}$ defined as follows
\begin{align}
\label{61}
\hat m_{t,T}&=P(\vartheta _{t-1,T}^\star)^{-1}\left[  a\sigma ^2m^\star_{t-1,T}+af\gamma_*(\vartheta_{t-1,T}^\star)   X_{t} \right],\qquad t\in
\left[\tau _\zT+1,T\right] .
\end{align}
We have to compare $m^\star_{t,T} $ with $m\left(\vartheta _0,t\right)$ for
large values of $T$.
Below $\eta _{t,T}= \sqrt{t}\left[\vartheta _{t,T}^\star -\vartheta _0
  \right]$ and      $\dot B^*\left(\vartheta _0,\vartheta _0\right)=-f^{-1}\sqrt{P\left(\vartheta
  _0\right)}\dot A(\vartheta _0)=P\left(\vartheta _0\right)^{-3/2}af\sigma
^2\dot \gamma _*\left(\vartheta _0\right) $.

\begin{theorem}
\label{T3} Let $t=\left[vT\right], v\in
(0,1]$, $k=t-\tau _\zT+2$  and $T\rightarrow \infty $. Then the following relations  hold
\begin{align}
&\sqrt{t}\left[m^\star_{t,T}-m_ {t} \left(\vartheta _0\right)\right]=\dot
B^*\left(\vartheta _0,\vartheta _0\right)\sum_{m=0}^{k}A\left(\vartheta
_0\right)^m\eta _{t-m-1,T}\;\zeta _{t-m}\left(\vartheta
_0\right)+o\left(1\right),\nonumber\\ &t\Ex_{\vartheta
  _0}\left[m^\star_{t,T}-m_{t}\left(\vartheta
  _0\right)\right]^2\longrightarrow S_b^\star\left(\vartheta _0\right)^2\equiv \frac{\dot B^*\left(\vartheta _0,\vartheta
  _0\right)^2}{{\rm I}_b\left(\vartheta _0\right)\left(1-A\left(\vartheta
  _0\right)^{2}\right)}.
\label{63}
\end{align}
Here $o\left(1\right)$ and  \eqref{63} converge  uniformly on compacts
$\KK\subset\Theta $. 
\end{theorem}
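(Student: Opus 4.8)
The plan is to turn the defining recursion \eqref{61} into a linear recursion for the error $\Delta_t:=m_{t,T}^\star-m_t(\vartheta_0)$ and to iterate it. First I would write both filters in the common one-step form $m_t=A(\vartheta)m_{t-1}+C(\vartheta)X_t$ with $A(\vartheta)=a\sigma^2P(\vartheta)^{-1}$ and $C(\vartheta)=af\gamma_*(\vartheta)P(\vartheta)^{-1}$: the stationary filter \eqref{56} is exactly of this shape, and the adaptive filter \eqref{61} is the same recursion with $\vartheta$ replaced at step $t$ by $\vartheta_{t-1,T}^\star$. Subtracting and adding $A(\vartheta_{t-1,T}^\star)m_{t-1}(\vartheta_0)$ gives
\begin{align*}
\Delta_t=A(\vartheta_{t-1,T}^\star)\Delta_{t-1}+\bigl[A(\vartheta_{t-1,T}^\star)-A(\vartheta_0)\bigr]m_{t-1}(\vartheta_0)+\bigl[C(\vartheta_{t-1,T}^\star)-C(\vartheta_0)\bigr]X_t .
\end{align*}

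The decisive step is to linearise the two increments at $\vartheta_0$ and to substitute $X_t=fm_{t-1}(\vartheta_0)+\sqrt{P(\vartheta_0)}\,\zeta_t(\vartheta_0)$. The key algebraic identity is $\dot A(\vartheta_0)+f\dot C(\vartheta_0)=0$: using $\dot P=f^2\dot\gamma_*$ and $P-f^2\gamma_*=\sigma^2$ one checks that both $\dot A(\vartheta_0)$ and $-f\dot C(\vartheta_0)$ equal $-af^2\sigma^2\dot\gamma_*(\vartheta_0)P(\vartheta_0)^{-2}$. Hence the deterministic drift carried by $m_{t-1}(\vartheta_0)$ cancels, and the surviving first-order term is $(\vartheta_{t-1,T}^\star-\vartheta_0)\dot C(\vartheta_0)\sqrt{P(\vartheta_0)}\,\zeta_t(\vartheta_0)=(\vartheta_{t-1,T}^\star-\vartheta_0)\dot B^*(\vartheta_0,\vartheta_0)\zeta_t(\vartheta_0)$, since $\dot C(\vartheta_0)\sqrt{P(\vartheta_0)}=\dot B^*(\vartheta_0,\vartheta_0)$. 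Replacing $A(\vartheta_{t-1,T}^\star)$ by $A(\vartheta_0)$ in the leading term and collecting the second-order contributions into a remainder $\rho_t$, I obtain
\begin{align*}
\Delta_t=A(\vartheta_0)\Delta_{t-1}+\dot B^*(\vartheta_0,\vartheta_0)(\vartheta_{t-1,T}^\star-\vartheta_0)\zeta_t(\vartheta_0)+\rho_t ,
\end{align*}
where $\rho_t=O\bigl((\vartheta_{t-1,T}^\star-\vartheta_0)^2\bigr)+O\bigl((\vartheta_{t-1,T}^\star-\vartheta_0)\Delta_{t-1}\bigr)=O(t^{-1})$ by Theorem \ref{T1}, with moments controlled through the bounds on $\dot m_t,\ddot m_t$ from Lemma \ref{L2}. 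Scaling by $\sqrt t$, writing $\xi_t=\sqrt t\,\Delta_t$ and using $\sqrt{t/(t-1)}\to1$ together with $\eta_{t-1,T}=\sqrt{t-1}(\vartheta_{t-1,T}^\star-\vartheta_0)$ turns this into $\xi_t=A(\vartheta_0)\xi_{t-1}+\dot B^*(\vartheta_0,\vartheta_0)\eta_{t-1,T}\zeta_t(\vartheta_0)+o(1)$. Since $|A(\vartheta_0)|<1$ by \eqref{27}, unrolling this contractive recursion back to the learning instant $\tau_\zT$ (which produces exactly $k=t-\tau_\zT+2$ summands) yields the first claimed representation; the boundary contribution $A(\vartheta_0)^{k+1}\xi_{\tau_\zT}$ and the accumulated remainder $\sqrt t\sum_m|A(\vartheta_0)|^m|\rho_{t-m}|=O(t^{-1/2})$ are both $o(1)$.

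For the second moment I would exploit the filtration structure. The preliminary estimator and the One-step correction in \eqref{58} depend only on $X_0,\dots,X_{s}$, so $\eta_{s,T}$ is $\goth F_s^X$-measurable, whereas the innovation $\zeta_{s+1}(\vartheta_0)$ is independent of $\goth F_s^X$ (Theorem 13.5 in \cite{LS01}). Therefore in $\Ex_{\vartheta_0}\xi_t^2$ every off-diagonal product contains a single innovation $\zeta_{t-m}(\vartheta_0)$ whose time index strictly exceeds those of all remaining factors; conditioning on the corresponding past $\sigma$-algebra annihilates that term, so all cross terms vanish. The diagonal terms factor as $A(\vartheta_0)^{2m}\Ex_{\vartheta_0}\eta_{t-m-1,T}^2$, and $\Ex_{\vartheta_0}\eta_{s,T}^2\to I_b(\vartheta_0)^{-1}$ by Theorem \ref{T1}. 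Dominated convergence over the geometric weights $A(\vartheta_0)^{2m}$, whose tail suppresses the few terms with small (non-stationary) time index, then gives
\begin{align*}
t\,\Ex_{\vartheta_0}\Delta_t^2\longrightarrow\dot B^*(\vartheta_0,\vartheta_0)^2 I_b(\vartheta_0)^{-1}\sum_{m=0}^{\infty}A(\vartheta_0)^{2m}=\frac{\dot B^*(\vartheta_0,\vartheta_0)^2}{I_b(\vartheta_0)\bigl(1-A(\vartheta_0)^{2}\bigr)}=S_b^\star(\vartheta_0)^2 .
\end{align*}

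The main obstacle I anticipate is making the remainder control fully rigorous and uniform on compacts $\KK\subset\Theta$: one must show that after the $\sqrt t$ scaling and propagation through the $O(t)$ steps of the recursion the second-order terms $\rho_t$ still vanish. This rests on the contraction $|A(\vartheta_0)|<1$, which prevents error accumulation, and on the uniform moment bounds of Lemma \ref{L2} and the uniform second-moment convergence of Theorem \ref{T1}; the uniformity of every limit on $\KK$ then propagates from the uniform statements already established in Lemma \ref{L1}, Remark \ref{R1} and Theorem \ref{T1}.
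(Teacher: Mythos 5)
Your proposal is correct and follows essentially the same route as the paper: the same error recursion $\Delta_t=A(\vartheta_0)\Delta_{t-1}+\dot B^*(\vartheta_0,\vartheta_0)(\vartheta_{t-1,T}^\star-\vartheta_0)\zeta_t(\vartheta_0)+\rho_t$, the same unrolling over $k$ steps using $|A(\vartheta_0)|<1$, and the same use of the measurability of $\eta_{s,T}$ versus the independence of $\zeta_{s+1}(\vartheta_0)$ to kill the cross terms and sum the geometric series. The only cosmetic difference is that you obtain the cancellation of the drift term via the linearised identity $\dot A(\vartheta_0)+f\dot C(\vartheta_0)=0$, whereas the paper gets it exactly by combining the two increments into $f^{-1}[A(\vartheta_0)-A(\vartheta_{t-1,T}^\star)](X_t-fm_{t-1}(\vartheta_0))$ before linearising.
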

\begin{proof}
Recall that
\begin{align*}
m_{t}\left(\vartheta _0\right)&=A(\vartheta _0) m_{t-1}\left(\vartheta _0\right)+
f^{-1}\left[a-A(\vartheta _0) \right] X_{t} ,\\
m^\star_{t,T}&=A(\vartheta_{t-1,T}^\star) \hat m_{t-1,T}+
f^{-1}\left[a-A(\vartheta _{t-1,T}^\star) \right] X_{t}.
\end{align*}
Therefore for $\delta _{t,T}=\sqrt{t}\left[m^\star_{t,T}-m_{t}\left(\vartheta _0\right)\right]$ we have
the equation 
\begin{align*}
\delta _{t,T}&=A(\vartheta_{t-1,T}^\star) \delta _{t-1,T}+\sqrt{t}
\left[A(\vartheta_{t-1,T}^\star)-A(\vartheta _0)\right]m_{t-1}\left(\vartheta
_0\right) \\
&\qquad \qquad  +f^{-1}\sqrt{t}\left[A(\vartheta _0)-A(\vartheta _{t-1,T}^\star) \right] X_{t}\\
&= A(\vartheta_{t-1,T}^\star) \delta _{t-1,T} +f^{-1}\sqrt{P\left(\vartheta
  _0\right)}\sqrt{T}\left[A(\vartheta _0)-A(\vartheta _{t-1,T}^\star) \right]\zeta
_t\left(\vartheta _0\right)\\ 
&= A(\vartheta_{t-1,T}^\star) \delta _{t-1,T}
-f^{-1}\sqrt{P\left(\vartheta _0\right)}\dot A(\tilde\vartheta
_{t-1})\sqrt{t}\left[\vartheta _{t-1,T}^\star -\vartheta _0 \right]\zeta
_t\left(\vartheta _0\right)\\ 
&= A(\vartheta_0)\; \delta _{t-1,T} +\dot B^*\left(\vartheta _0,\vartheta _0\right)\;\eta _{t-1,T}\;\zeta _t\left(\vartheta
_0\right)+\varepsilon _t,\qquad t\in \left[\tau _\zT+1,T\right],
\end{align*}
where  $\varepsilon _t=O\left(\vartheta _{t-1,T}^\star -\vartheta _0
\right)=O\left(T^{-1/2}\right)$. 

 Let us denote $A\equiv A\left(\vartheta
_0\right), \dot B^*\equiv \dot B^*\left(\vartheta _0,\vartheta _0\right), \zeta _t=\zeta _t\left(\vartheta
_0\right)$ and make $k=t-\tau _\zT+2$ iterations
\begin{align*}
\delta _{t,T}&=A\, \delta _{t-1,T}+\dot B^*\,\eta _{t-1,T}\;\zeta _t+\varepsilon _t\\
&=A^2\, \delta _{t-2,T}+  A \dot B^*\,\eta _{t-2,T}\;\zeta _{t-1}+\dot B^*\,\eta
_{t-1,T}\;\zeta _t+A\varepsilon _{t-1}+ \varepsilon _t \\
&=\ldots .\ldots .\ldots .\ldots .\ldots .\ldots .\ldots .\ldots .\ldots .\ldots .\ldots .\ldots .\\
&=A^{k+1}\delta _{t-k,T}+\dot B^*\sum_{m=0}^{k}A^m\eta _{t-m-1,T}\;\zeta _{t-m}+\sum_{m=0}^{k}A^m\varepsilon _{t-m}.
\end{align*}  
Therefore for the large values of $t$ and $k=t-\tau _\zT+2$ we have the   representation, 
\begin{align*}
\delta _{t,T}&=\dot B^*\sum_{m=0}^{k}A^m\eta _{t-m-1,T}\;\zeta
_{t-m}+o\left(1\right).
\end{align*}
Note that $\eta _{t-m-1,T} $ and $\zeta
_{t-m}$ are independent. Hence, if $t=\left[vT\right], T\rightarrow \infty $, then
\begin{align*}
\Ex_{\vartheta _0} \delta _{t,T}^2&=\dot B^{*2}\sum_{m=0}^{k}\sum_{l=0}^{k}A^{m+l}\Ex_{\vartheta _0} \left[\eta _{t-m-1,T}\;\zeta
_{t-m} \eta _{t-l-1,T}\;\zeta
_{t-l}\right]+o\left(1\right)\\
&=\dot B^{*2}\sum_{m=0}^{k}A^{2m }\Ex_{\vartheta _0} \left[\eta _{t-m-1,T}^2\zeta
_{t-m}^2\right]+o\left(1\right)=\dot B^{*2}\sum_{m=0}^{k}A^{2m }\Ex_{\vartheta _0} \left[\eta _{t-m-1,T}^2\right]+o\left(1\right)\\
&=\frac{\dot B^{*2}}{{\rm I}_b\left(\vartheta _0\right)}\sum_{m=0}^{k}A^{2m
}\frac{t}{t-m-1}+o\left(1\right)\\
&\longrightarrow \frac{\dot B^*\left(\vartheta
  _0,\vartheta
  _0\right)^{2}}{{\rm I}\left(\vartheta
  _0\right)}\sum_{m=0}^{\infty }A^{2m}= \frac{\dot B^*\left(\vartheta
  _0,\vartheta
  _0\right)^2}{{\rm I}_b\left(\vartheta _0\right)\left(1-A\left(\vartheta
  _0\right)^{2}\right)}. 
\end{align*}

\end{proof}
\begin{remark}
\label{R8}
{\rm The adaptive Kalman filter is given by the relations
  \eqref{57}-\eqref{61}, where the only One-step MLE-process
  \eqref{58} is in non recurrent form. Let us write this estimator in
  recurrent form too.  Denote
\begin{align*}
S_{t,T}(\vartheta _{\tau _\zT}^*)&=\frac{1}{{\rm
      I}_b(\vartheta _{\tau _\zT}^*)\left(t-\tau _\zT\right) }\sum_{s=\tau
    _\zT+1}^{t}\left[\frac{\left[X_s-fm_{s-1}(\vartheta_{\tau _\zT}^*
        )\right]}{ P (\vartheta_{\tau _\zT}^*)}f\dot
    m_{s-1}(\vartheta_{\tau _\zT}^* )\right.\nonumber\\
 &\qquad \qquad \quad  \left.+   \left( \left[X_s-fm_{s-1}(\vartheta_{\tau _\zT}^*
     )\right]^2- P(\vartheta_{\tau _\zT}^* )\right)\frac{ \dot
       P(\vartheta_{\tau _\zT}^* )
    }{2 P(\vartheta_{\tau _\zT}^* )^2}\right].
\end{align*}
Then we can write 
\begin{align*}
 \vartheta _{t,T}^\star&=\vartheta _{\tau _\zT}^*+ S_{t,T}(\vartheta _{\tau
   _\zT}^*)=\frac{\vartheta _{\tau _\zT}^*}{t-\tau _\zT}+ \frac{\left(t-1-\tau
   _\zT\right) }{\left(t-\tau _\zT\right) } \left[\vartheta _{\tau _\zT}^*+ S_{t-1,T}(\vartheta _{\tau
   _\zT}^*)\right]\nonumber\\
&\qquad \quad +\frac{1}{{\rm
      I}_b(\vartheta _{\tau _\zT}^*)\left(t-\tau _\zT\right) }\left[\frac{\left[X_t-fm_{t-1}(\vartheta_{\tau _\zT}^*
        )\right]}{ P (\vartheta_{\tau _\zT}^*)}f\dot
    m_{t-1}(\vartheta_{\tau _\zT}^* )\right.\nonumber\\
 &\qquad\quad  \left.+   \left( \left[X_t-fm_{t-1}(\vartheta_{\tau _\zT}^*
    )\right]^2- P(\vartheta_{\tau _\zT}^* )\right)\frac{ \dot
       P(\vartheta_{\tau _\zT}^* )
    }{2 P(\vartheta_{\tau _\zT}^* )^2}\right]\nonumber\\
&=\frac{\vartheta _{\tau _\zT}^*}{t-\tau _\zT}+ \left(1-\frac{1}{t-\tau
   _\zT}\right)  \;  \vartheta _{t-1,T}^\star  +\frac{ \left[X_t-fm_{t-1}(\vartheta_{\tau _\zT}^*
        )\right]f\dot
    m_{t-1}(\vartheta_{\tau _\zT}^* ) }{{\rm
      I}_b(\vartheta _{\tau _\zT}^*)\left(t-\tau _\zT\right)P(\vartheta_{\tau _\zT}^* )}\nonumber\\
 &\qquad\quad  +\frac{    \left( \left[X_t-fm_{t-1}(\vartheta_{\tau _\zT}^*
    )\right]^2- P(\vartheta_{\tau _\zT}^* )\right)\dot
       P(\vartheta_{\tau _\zT}^* )  }{2{\rm
      I}_b(\vartheta _{\tau _\zT}^*)\left(t-\tau _\zT\right) P(\vartheta_{\tau
     _\zT}^* )^2}, \quad t\in \left[ \tau _\zT+1,T\right].
\end{align*}
Therefore the recurrent equation for the One-step MLE-process is 
\begin{align}
\label{64}
\vartheta _{t,T}^\star&=\frac{\vartheta _{\tau _\zT}^*}{t-\tau _\zT}+ \left(1-\frac{1}{t-\tau
   _\zT}\right)  \;  \vartheta _{t-1,T}^\star  +\frac{ \left[X_t-fm_{t-1}(\vartheta_{\tau _\zT}^*
        )\right]f\dot
    m_{t-1}(\vartheta_{\tau _\zT}^* ) }{{\rm
      I}_b(\vartheta _{\tau _\zT}^*)\left(t-\tau _\zT\right)P(\vartheta_{\tau _\zT}^* )}\nonumber\\
 &\qquad +\frac{    \left( \left[X_t-fm_{t-1}(\vartheta_{\tau _\zT}^*
    )\right]^2- P(\vartheta_{\tau _\zT}^* )\right)\dot
       P(\vartheta_{\tau _\zT}^* )  }{2{\rm
      I}_b(\vartheta _{\tau _\zT}^*)\left(t-\tau _\zT\right) P(\vartheta_{\tau
     _\zT}^* )^2}, \quad t\in \left[ \tau _\zT+1,T\right].
\end{align}

Now the adaptive Kalman filter \eqref{57},\eqref{59} -\eqref{61},
\eqref{64} is in recurrent form.

}
\end{remark}

\begin{remark}
\label{R9}
{\rm The cases $\vartheta =f$, $\vartheta =a$ and $\vartheta =\sigma ^2$ can
  be studied similarly. If we consider the two-dimensional cases, say,
  $\vartheta =\left(f,a\right)$,  then the corresponding adaptive Kalman filter
  can be written as well. The preliminary MME estimator $\vartheta _{\tau
    _\zT}^* =\left(f_T^*,a_T^*\right)^\top$ was defined in
  \eqref{24}-\eqref{25}, for the Fisher information matrix see
  \eqref{32},\eqref{33} and \eqref{35}, One-step MLE-process is given
  in \eqref{41}, the equations for  $\dot m_f\left(\vartheta _\zT,t\right)
  $ and  $\dot m_a\left(\vartheta _\zT,t\right)  $ can be easily written. The
  equation for  $m^\star_{t,T}$  has exactly the same form as the given above
  in \eqref{61}.
 
}
\end{remark}

\begin{remark}
\label{R10}
{\rm  
It is possible as well to verify the asymptotic normality 
\begin{align*}
\sqrt{t}\left(m^\star_{t,T}-m\left(\vartheta _0,t\right)\right)\Longrightarrow
     {\cal N}\left(0,S_b^\star\left(\vartheta _0\right)^2\right).
\end{align*}

}
\end{remark}
\section{Asymptotic efficiency}

We have the same  system
\begin{align*}
X_t&=f\,Y_{t-1}+\sigma\, w_t,\qquad X_0,\qquad t=1,2,\ldots,\\
Y_t&=a\,Y_{t-1}+ b\, v_t,\qquad \;\;Y_0.
\end{align*}
where the parameters  $f,a,b,\sigma ^2$ satisfy the condition ${\scr A}_0$. 

The adaptive filter is given by \eqref{61} and we would like to know if the
error of approximation $\Ex_\vartheta \left|m_{t,T}^\star-m\left(\vartheta,t
\right)\right|^2 $ is asymptotically minimal?  As usual in such situations we
propose a lower minimax bound on the risks of all estimators $\bar m_t$
supposing that $\bar m_t$   is based on the observations up to time $t$,
i.e., these estimators depend on $X^t=\left(X_s,s=0,1,\ldots, t\right)$.

Recall some notations 
\begin{align*}
A\left(\vartheta _0\right)=\frac{a\sigma ^2}{\sigma ^2+f^2\gamma
  _*\left(\vartheta _0\right)}, \qquad B^*\left(\vartheta _0,\vartheta
_0\right)= \frac{a\sigma ^2f\dot \gamma _*\left(\vartheta
  _0\right)}{\left[\sigma ^2+ f^2\gamma _*\left(\vartheta _0\right)\right]^{3/2}}
\end{align*}
and the equations for $\dot m\left(\vartheta_0 ,\cdot
\right) $
\begin{align*}
\dot m\left(\vartheta ,t \right)&=A\left(\vartheta _0\right)\dot m\left(\vartheta_0 ,t-1 \right) +B^*\left(\vartheta _0,\vartheta _0\right)\zeta _t\left(\vartheta _0\right),
\end{align*}

Recall as well the asymptotic representations for
$\eta _{t,T}^\star\left(\vartheta _0\right)=\sqrt{vT}\left(\vartheta
_{t,T}^\star-\vartheta _0\right), t=vT, v\in (0,1] $
\begin{align}
\label{42-75}
 \eta _{t,T}^\star\left(\vartheta _0\right)=\frac{{\rm I}\left(\vartheta _0\right)^{-1}}{\sqrt{t P\left(\vartheta_0
    \right)}}\sum_{s=1}^{t} \left[ \zeta _s\left(\vartheta_0
  \right)f\dot m_{s-1}\left(\vartheta_0 \right)+\left[\zeta
    _s\left(\vartheta_0 \right)^2-1 \right]\frac{\dot P \left(\vartheta_0
    \right)}{2\sqrt{ P \left(\vartheta_0 \right)}}\right]+o\left(1\right).
\end{align}
 The similar representations for the MLE $\hat\eta
_{t}\left(\vartheta _0\right)=\sqrt{t}\left(\hat\vartheta
_{t }-\vartheta _0\right), v\in (0,1] $ and BE $\tilde\eta
  _{t}\left(\vartheta _0\right)=\sqrt{t}\left(\tilde\vartheta
  _{t}-\vartheta _0\right),t=vT,  v\in (0,1] $ are 
\begin{align}
 \hat\eta _{t}\left(\vartheta _0\right)&=\frac{{\rm I}\left(\vartheta
   _0\right)^{-1}}{\sqrt{t P\left(\vartheta_0 
    \right)}}\sum_{s=1}^{t} \left[ \zeta _s\left(\vartheta_0
  \right)f\dot m_{s-1}\left(\vartheta_0 \right)+\left[\zeta
    _s\left(\vartheta_0 \right)^2-1 \right]\frac{\dot P \left(\vartheta_0
    \right)}{2\sqrt{ P \left(\vartheta_0 \right)}}\right]+o\left(1\right),\nonumber\\
\label{42-80}
 \tilde\eta _{t}\left(\vartheta _0\right)&=\frac{{\rm I}\left(\vartheta _0\right)^{-1}}{\sqrt{t P\left(\vartheta_0
    \right)}}\sum_{s=1}^{t} \left[ \zeta _s\left(\vartheta_0
  \right)f\dot m_{s-1}\left(\vartheta_0 \right)+\left[\zeta
    _s\left(\vartheta_0 \right)^2-1 \right]\frac{\dot P \left(\vartheta_0
    \right)}{2\sqrt{ P \left(\vartheta_0 \right)}}\right]+o\left(1\right).
\end{align}
The properties \eqref{45}-\eqref{48}  of the normalized LR  $Z_T\left(\cdot
\right)$ established in the Theorem \ref{T2} correspond to the sufficient
conditions of Theorems 3.1.2  and 3.2.2 in \cite{IH81}, where such
representations were proved in the general case.

Introduce the limit \eqref{63}
\begin{align}
\label{42-81}
S^\star_b\left(\vartheta\right)^2=\lim_{T\rightarrow \infty } \Ex_{\vartheta }\left[ \dot
m\left(\vartheta,t \right)^2 \tilde\eta _{t}\left(\vartheta
\right)^2\right]=\frac{\dot B^*\left(\vartheta ,\vartheta \right)^2}{{\rm I}\left(\vartheta \right)\left(1-A\left(\vartheta \right)^2\right) }.
\end{align}
As the asymptotic representations of $\tilde\eta _{t,T}\left(\vartheta \right)
$ and $ \eta _{t,T}^\star\left(\vartheta \right)$ are similar (see
\eqref{42-75} and \eqref{42-80}) hence the limits \eqref{63} and \eqref{42-81}
coincide too.

\begin{theorem}
\label{T42-6}
Let the conditions of Theorem \ref{T2} be fulfilled. Then 
we have the following lower minimax bound: for any estimator $\bar m_{t,T}$ of
$m\left(\vartheta,t \right)$ (below $t=vT$)
\begin{align*}
\lim_{\nu \rightarrow 0}\Liminf_{T\rightarrow \infty }\sup_{\left|\vartheta
  -\vartheta _0\right|\leq \nu }t \Ex_{\vartheta }\left|\bar
m_{t,T}-m\left(\vartheta,t \right) \right|^2 \geq S_b^\star\left(\vartheta _0\right)^2.
\end{align*}

\end{theorem}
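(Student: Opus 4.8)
The plan is to deduce this filtering lower bound from the classical Hájek--Le Cam bound for estimation of the scalar parameter $\vartheta=b$, exploiting the local linearization of $m\left(\vartheta,t\right)$ in $\vartheta$ together with the LAN structure \eqref{45}--\eqref{46} established in Theorem \ref{T2}. First I would localize at $\vartheta_0$ by setting $\vartheta=\vartheta_0+u/\sqrt t$ and bounding the minimax risk from below by a Bayes risk: for any continuous prior density $q$ with fixed compact support in the $u$-variable,
\begin{align*}
\sup_{\left|\vartheta-\vartheta_0\right|\le\nu} t\,\Ex_\vartheta\left|\bar m_{t,T}-m\left(\vartheta,t\right)\right|^2 \;\ge\; \int q\left(u\right)\, t\,\Ex_{\vartheta_0+u/\sqrt t}\left|\bar m_{t,T}-m\left(\vartheta_0+u/\sqrt t,t\right)\right|^2\,{\rm d}u,
\end{align*}
valid for all $T$ large enough that the support lies in $\left(\sqrt t(\alpha_b-\vartheta_0),\sqrt t(\beta_b-\vartheta_0)\right)$. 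Since the bound must hold for every such $q$, it suffices to control the limiting Bayes risk and then let the prior concentrate.

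Next I would Taylor-expand the target. Using Lemma \ref{L2} one has, in $L^2$ and uniformly on compacts,
\begin{align*}
\sqrt t\left[m\left(\vartheta_0+u/\sqrt t,t\right)-m_t\left(\vartheta_0\right)\right]=u\,\dot m_t\left(\vartheta_0\right)+\frac{u^2}{2\sqrt t}\ddot m_t\left(\tilde\vartheta\right)=u\,\dot m_t\left(\vartheta_0\right)+o\left(1\right).
\end{align*}
Introducing the analytical statistic $\hat U_{t,T}=\sqrt t\left[\bar m_{t,T}-m_t\left(\vartheta_0\right)\right]$, which is admissible in the fixed-$\vartheta_0$ local analysis, the inner expectation becomes $\Ex_{\vartheta_0+u/\sqrt t}\left|\hat U_{t,T}-u\,\dot m_t\left(\vartheta_0\right)\right|^2+o\left(1\right)$. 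Thus the problem is reduced to a Gaussian-shift estimation of the quantity $u\,\dot m_t\left(\vartheta_0\right)$.

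The third step is to identify the limiting experiment. By \eqref{45}--\eqref{46} the localized likelihood ratios $Z_t(u)=\exp\left(u\Delta_t-\tfrac{u^2}{2}{\rm I}_b\left(\vartheta_0\right)+o\left(1\right)\right)$ with $\Delta_t\Longrightarrow\mathcal N\left(0,{\rm I}_b\left(\vartheta_0\right)\right)$, so the experiments converge to the Gaussian shift with information ${\rm I}_b\left(\vartheta_0\right)$. Simultaneously the stationary moving average $\dot m_t\left(\vartheta_0\right)=\dot B^*\left(\vartheta_0,\vartheta_0\right)\sum_{k\ge0}A\left(\vartheta_0\right)^k\zeta_{t-k}\left(\vartheta_0\right)$ converges to a centered Gaussian $D$ with $\Ex D^2=\dot B^*\left(\vartheta_0,\vartheta_0\right)^2/\left(1-A\left(\vartheta_0\right)^2\right)=\Ex_{\vartheta_0}\dot m_t\left(\vartheta_0\right)^2$, and $D$ is \emph{asymptotically independent} of $\Delta_t$: the innovations that build $\dot m_t\left(\vartheta_0\right)$ enter the normalized score $\Delta_t$ only through its last $O(1)$ summands, whose aggregate contribution is $O\left(t^{-1/2}\right)$ and disappears in the limit. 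Hence, invoking the Ibragimov--Khasminskii machinery (Theorems~3.1.2 and~3.2.2 in \cite{IH81}), the limiting Bayes problem is: knowing the independent nuisance $D$, estimate $uD$ from a Gaussian shift of Fisher information ${\rm I}_b\left(\vartheta_0\right)$. Conditioning on $D=d$ this is estimation of $ud$ with $d$ known, of limiting risk $d^2{\rm I}_b\left(\vartheta_0\right)^{-1}$; integrating over the law of $D$ and concentrating the prior yields $\Ex\left[D^2\right]{\rm I}_b\left(\vartheta_0\right)^{-1}=S_b^\star\left(\vartheta_0\right)^2$, in agreement with \eqref{42-81} and \eqref{63}. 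Letting $\nu\to0$ completes the proof.

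The main obstacle will be making the asymptotic independence of $\dot m_t\left(\vartheta_0\right)$ and $\Delta_t$ fully rigorous and, crucially, showing that the statistician's knowledge of $D$ cannot be leveraged to beat ${\rm I}_b\left(\vartheta_0\right)^{-1}$ in the $u$-direction. It is precisely this independence that prevents any correlation between the nuisance $D$ and the score from sharpening the conditional experiment, so that the clean product form $\Ex\left[D^2\right]{\rm I}_b\left(\vartheta_0\right)^{-1}$ persists; handling the remainder terms uniformly in $u$ over the prior support, via the estimates \eqref{47}--\eqref{48} and Lemma \ref{L2}, is the remaining technical point.
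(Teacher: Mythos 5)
Your proposal is correct and follows essentially the same route as the paper: lower-bound the local minimax risk by a Bayes risk, linearize $m\left(\vartheta_0+u/\sqrt t,t\right)-m_t\left(\vartheta_0\right)\approx u\,\dot m_t\left(\vartheta_0\right)/\sqrt t$ via Lemma \ref{L2}, and use the LAN property \eqref{45}--\eqref{46} to identify the limiting Bayes risk as $\Ex_{\vartheta_0}\left[\dot m_t\left(\vartheta_0\right)^2\right]{\rm I}_b\left(\vartheta_0\right)^{-1}=S_b^\star\left(\vartheta_0\right)^2$. The asymptotic independence of $\dot m_t\left(\vartheta_0\right)$ from the normalized score, which you correctly single out as the key technical point, is exactly what the paper uses (implicitly) to obtain the factorized limit \eqref{42-81}.
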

\begin{proof} The given below proof is based on the proof of Theorem 1.9.1 in \cite{IH81} and 
was published in \cite{Kut23a} in the case of continuous time observations. It
is quite short and we repeat it here for convenience of reading.  We have the
elementary estimate
\begin{align*}
\sup_{\left|\vartheta -\vartheta _0\right|\leq \nu } \Ex_{\vartheta
}\left|\bar m_{t,T}-m\left(\vartheta,t \right) \right|^2 \geq \int_{\vartheta
  _0-\nu }^{\vartheta _0+\nu }\Ex_{\vartheta }\left|\bar
m_{t,T}-m\left(\vartheta,t \right) \right|^2p_\nu \left(\vartheta \right){\rm
  d}\vartheta.
\end{align*}
Here the function $p_\nu \left(\vartheta \right),\vartheta _0-\nu<\vartheta
<\vartheta _0+\nu $ is a positive continuous density on the interval
$\left[\vartheta _0-\nu,\vartheta _0+\nu \right]$. If we denote
$\tilde m_{t}$ Bayesian estimator of $m\left(\vartheta,t \right) $, which
corresponds to this density $p_\nu \left(\cdot \right)$, then
\begin{align*}
\tilde m_{t}=\int_{\vartheta _0-\nu }^{\vartheta _0+\nu }m\left(\theta,t
\right) p_\nu \left(\theta |X^t\right){\rm d}\theta  ,\qquad p_\nu
\left(\theta |X^t\right)=\frac{p_\nu \left(\theta
  \right)L\left(\theta ,X^t\right) }{\int_{\vartheta _0-\nu }^{\vartheta
    _0+\nu }p_\nu \left(\theta 
  \right)L\left(\theta ,X^t\right){\rm d}\theta   }
\end{align*}
and
\begin{align*}
\int_{\vartheta
  _0-\nu }^{\vartheta _0+\nu }\Ex_{\vartheta }\left|\bar
m_{t,T}-m\left(\vartheta,t \right) \right|^2p_\nu \left(\vartheta \right){\rm
  d}\vartheta\geq \int_{\vartheta
  _0-\nu }^{\vartheta _0+\nu }\Ex_{\vartheta }\left|\tilde
m_{t}-m\left(\vartheta,t \right) \right|^2p_\nu \left(\vartheta \right){\rm
  d}\vartheta.
\end{align*}
The asymptotic behavior of BE $\tilde m_{t} $ can be described as follows (below $\theta
_u=\vartheta+\varphi _t u, \varphi _t=t^{-1/2}, \UU_\nu
=\left(\sqrt{t}\left(\vartheta _0-\nu -\vartheta
\right),\sqrt{t}\left(\vartheta _0+\nu -\vartheta \right) \right)$    )  
\begin{align*}
\tilde m_{t}&=\frac{\int_{\vartheta _0-\nu }^{\vartheta _0+\nu }m\left(
  \theta,t \right){p_\nu \left(\theta \right)L\left(\theta ,X^t\right) }{\rm
    d}\theta } {\int_{\vartheta _0-\nu }^{\vartheta _0+\nu }p_\nu \left(\theta
  \right)L\left(\theta ,X^t\right){\rm d}\theta }=\frac{\int_{\UU_\nu }^{
  }m\left(\theta_u,t \right){p_\nu \left(\theta_u
    \right)L\left(\theta_u,X^t\right) }{\rm d}u } {\int_{\UU_\nu }^{}p_\nu
  \left(\theta_u \right)L\left(\theta_u ,X^t\right){\rm d}u }\\
 &=m\left(\vartheta,t \right)+\varphi _t\dot m\left( \vartheta,t \right)
\frac{\int_{\UU_\nu }^{ }u{p_\nu \left(\theta_u
    \right)\frac{L\left(\theta_u,X^t\right)}{L\left(\vartheta ,X^t\right)}
  }{\rm d}u } {\int_{\UU_\nu }^{}p_\nu \left(\theta_u
  \right)\frac{L\left(\theta_u,X^t\right)}{L\left(\vartheta ,X^t\right)}{\rm
    d}u }\left(1+o\left(1\right)\right)\\
 &=m\left(\vartheta,t \right)+\varphi _t\dot m\left( \vartheta,t \right)
\frac{\int_{\UU_\nu } u  p_\nu \left(\vartheta \right)Z_t\left(u\right){\rm
      d}u } {\int_{\UU_\nu }^{}p_\nu \left(\vartheta
    \right)Z_t\left(u\right){\rm d}u }\left(1+o\left(1\right)\right).
\end{align*}
Hence
\begin{align*}
\sqrt{t}\left(\tilde m_{t}- m\left(\vartheta,t \right)
\right)&=\dot m\left( \vartheta,t \right)\frac{\int_{\UU_\nu } u
  Z_t\left(u\right){\rm d}u } {\int_{\UU_\nu 
  }^{} Z_t\left(u\right){\rm d}u }\left(1+o\left(1\right)\right)\\
&=\dot m\left( \vartheta,t \right)\frac{\Delta _t\left(\vartheta
  ,X^t\right)}{{\rm I}\left(\vartheta \right)} \left(1+o\left(1\right)\right) ,
\end{align*}
where  (see Lemma \ref{L1})
\begin{align*}
\Delta _t\left(\vartheta
  ,X^t\right)=\frac{1}{\sqrt{t P\left(\vartheta_0
    \right)}}\sum_{s=1}^{t} \left[ \zeta _s\left(\vartheta_0
  \right)f\dot m_{s-1}\left(\vartheta_0 \right)+\left[\zeta
    _s\left(\vartheta_0 \right)^2-1 \right]\frac{\dot P \left(\vartheta_0
    \right)}{2\sqrt{ P \left(\vartheta_0 \right)}}\right].
\end{align*}
Recall that $Z_t\left(u\right)\Rightarrow Z\left(u\right)=\exp\left(u\Delta
\left(\vartheta \right)-\ds\frac{u^2}{2}{\rm I}\left(\vartheta \right) \right)
$
and
\begin{align*}
\frac{\int_{{\cal R} } u Z\left(u\right){\rm d}u } {\int_{{\cal R} }^{}
  Z\left(u\right){\rm d}u }=\frac{\Delta
\left(\vartheta \right)}{{\rm I}\left(\vartheta \right) }.
\end{align*}
Moreover  the uniform  on compacts $\KK\subset \left(\vartheta _0-\nu
,\vartheta _0+\nu\right)$ convergence of moments of BE allows us to write 
\begin{align*}
t\Ex_\vartheta \left(\tilde m_{t}- m\left(\vartheta,t \right) \right)^2
\rightarrow \lim_{t\rightarrow \infty }t \Ex_\vartheta \left[\dot m\left(
  \vartheta,t \right)^2(\tilde\vartheta _t-\vartheta
  )^2\right]=S_b^\star\left(\vartheta \right)^2
\end{align*}
holds too. 

The detailed proof of written above relations can be found in the proofs of
Theorems 3.2.1 and 3.2.2 in \cite{IH81} 

Therefore
\begin{align*}
t\int_{\vartheta _0-\nu }^{\vartheta _0+\nu }\Ex_{\vartheta }\left|\tilde
m_{t}-m\left(\vartheta,t \right) \right|^2p_\nu \left(\vartheta \right){\rm
  d}\vartheta \longrightarrow \int_{\vartheta _0-\nu }^{\vartheta _0+\nu
}S_b^\star\left(\vartheta \right)^2p_\nu \left(\vartheta \right){\rm d}\vartheta 
\end{align*}
and as $\nu \rightarrow 0$ 
\begin{align*}
\int_{\vartheta _0-\nu }^{\vartheta _0+\nu
}S_b^\star\left(\vartheta \right)^2p_\nu \left(\vartheta \right){\rm d}\vartheta
\longrightarrow S_b^\star\left(\vartheta_0 \right)^2 .
\end{align*}

\end{proof}

  We call the estimator $ m_{t,T}^\circ, \tau _\zT<t\leq T $   {\it
  asymptotically efficient} if for all $\vartheta _0\in\Theta $, $t=vT$, any  $v\in
  \left[\varepsilon _0,1\right]$
\begin{align*}
\lim_{\nu \rightarrow 0}\lim_{T\rightarrow \infty }\sup_{\left|\vartheta
  -\vartheta _0\right|\leq \nu }t \Ex_{\vartheta }\left|
m_{t,T}^\circ -m\left(\vartheta,t \right) \right|^2 = S_b^\star\left(\vartheta _0\right)^2.
\end{align*}

Here $\varepsilon _0\in (0,1)$.

\begin{theorem}
\label{T5}
The  estimator  $m^\star_t,  \tau _\zT<t\leq T$
is    asymptotically efficient.
\end{theorem}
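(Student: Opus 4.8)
The plan is to show that the adaptive filter $m^\star_{t,T}$ attains the lower minimax bound established in Theorem \ref{T42-6}, so that the defining equality of asymptotic efficiency holds with $m_{t,T}^\circ=m^\star_{t,T}$. Theorem \ref{T42-6} already supplies the inequality
\[
\lim_{\nu\to 0}\Liminf_{T\to\infty}\sup_{|\vartheta-\vartheta_0|\leq\nu} t\,\Ex_\vartheta\left|m^\star_{t,T}-m\left(\vartheta,t\right)\right|^2\geq S_b^\star\left(\vartheta_0\right)^2,
\]
so it suffices to prove the reverse relation for the iterated limit, after which the two combine into the required equality. The essential input is Theorem \ref{T3}, which gives not merely a pointwise limit at $\vartheta_0$ but, as stated there, the convergence
\[
t\,\Ex_{\vartheta}\left[m^\star_{t,T}-m_{t}\left(\vartheta\right)\right]^2\longrightarrow S_b^\star\left(\vartheta\right)^2
\]
uniformly on compacts $\KK\subset\Theta$. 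Here I apply Theorem \ref{T3} with the running value $\vartheta$ playing the role of its true parameter, which is legitimate because $\Ex_\vartheta$ is the expectation under the law generated by $\vartheta$ and $m(\vartheta,t)=m_t(\vartheta)$ is precisely the optimal filter for that parameter.

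The main step is to upgrade this uniform convergence to a statement about the supremum over the shrinking neighborhood. For $\nu$ small enough that the closed ball $\KK_\nu=\{\vartheta:|\vartheta-\vartheta_0|\leq\nu\}$ is a compact subset of the open set $\Theta$, uniform convergence on $\KK_\nu$ yields
\[
\left|\sup_{\vartheta\in\KK_\nu}t\,\Ex_\vartheta\left|m^\star_{t,T}-m\left(\vartheta,t\right)\right|^2-\sup_{\vartheta\in\KK_\nu}S_b^\star\left(\vartheta\right)^2\right|\leq\sup_{\vartheta\in\KK_\nu}\left|t\,\Ex_\vartheta\left[m^\star_{t,T}-m_t\left(\vartheta\right)\right]^2-S_b^\star\left(\vartheta\right)^2\right|\longrightarrow 0,
\]
whence $\lim_{T\to\infty}\sup_{\vartheta\in\KK_\nu}t\,\Ex_\vartheta|m^\star_{t,T}-m(\vartheta,t)|^2=\sup_{\vartheta\in\KK_\nu}S_b^\star(\vartheta)^2$. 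It then remains to let $\nu\to 0$. From the explicit formula $S_b^\star(\vartheta)^2=\dot B^*(\vartheta,\vartheta)^2\,[\,{\rm I}_b(\vartheta)(1-A(\vartheta)^2)\,]^{-1}$, together with the smoothness of $\gamma_*(\vartheta)$ in \eqref{6}, the strict positivity $\inf_{\vartheta\in\Theta}{\rm I}_b(\vartheta)>0$ used in Theorem \ref{T2}, and the bound $\sup_{\vartheta\in\Theta}|A(\vartheta)|<1$ from \eqref{27}, the map $\vartheta\mapsto S_b^\star(\vartheta)^2$ is continuous on $\Theta$, so $\lim_{\nu\to 0}\sup_{\vartheta\in\KK_\nu}S_b^\star(\vartheta)^2=S_b^\star(\vartheta_0)^2$.

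Combining these, the iterated limit of the minimax risk equals $S_b^\star(\vartheta_0)^2$, which coincides with the lower bound of Theorem \ref{T42-6}; hence $m^\star_{t,T}$ satisfies the defining equality of asymptotic efficiency for every $\vartheta_0\in\Theta$ and every $v\in[\varepsilon_0,1]$. The main obstacle is not any new estimate but the careful use of uniformity: one must ensure that the convergence supplied by Theorem \ref{T3} is genuinely uniform over the whole neighborhood $\KK_\nu$ and not only at the centre $\vartheta_0$, since it is exactly this uniformity that licenses the interchange of the supremum with the limit in $T$. Once that is secured, the continuity argument for $S_b^\star(\cdot)$ and the passage $\nu\to 0$ are routine.
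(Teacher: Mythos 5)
Your proposal is correct and follows essentially the same route as the paper, whose proof of this theorem is a single line invoking the uniform convergence \eqref{63} from Theorem \ref{T3}; you simply make explicit the two routine steps that line leaves implicit (interchanging the supremum over the shrinking neighbourhood with the limit in $T$ via uniformity, and passing $\nu\to 0$ via continuity of $S_b^\star(\cdot)$). The identification of $m(\vartheta,t)$ with its stationary version $m_t(\vartheta)$ that you use tacitly is also the paper's own convention, justified earlier by the stability of the filtering equation.
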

\begin{proof}
The proof follows from the uniform convergence  \eqref{63}  of the Theorem \ref{T3}.

\end{proof}


   \end{document}